\newtheorem{theorem}{Theorem}
\newtheorem{remark}{Remark}
\newtheorem{proposition}{Proposition}
\newtheorem{lemma}{Lemma}
\newtheorem{corollary}{Corollary}
\newtheorem{claim}{Claim}
\newtheorem*{claim*}{Claim}
\newfont\bbf{msbm10 at 12pt}
\def\c{{\bf C}}
\def\eps{\varepsilon}
\def\phi{\varphi}
\def\R{{\mathbb R}}
\def\N{{\mathbb N}}
\def\Z{{\mathbb Z}}
\def\E{{\mathcal E}}
\def\L{{\mathcal L}}
\def\H{{\mathcal H}}
\def\P{{\mathcal P}}
\def\Q{{\mathcal Q}}
\def\D{{\mathcal D}}
\def\M{{\mathcal M}}
\def\S{{\mathcal S}}
\def\es{{\emptyset}}
\def\sm{\setminus}
\def\lev{\mbox{level}}
\def\sCrit{\mbox{\tiny Crit}}
\def\dm{\mathfrak D}
\def\supp{\mbox{\rm supp}}
\def\orb{\mbox{\rm orb}}
\def\Crit{\mbox{\rm Crit}}
\def\conv{\mbox{conv\ }}
\def\bd{\partial }
\def\le{\leqslant}
\def\ge{\geqslant}
\def\htop{h_{top}}
\def\laps{\mbox{laps}}
\newcommand{\st}{such that }
\newcommand{\hold}{H\"{o}lder }
\begin{document}
\bibliographystyle{plain}
\title[Equilibrium states for interval maps:
the potential $-t \log |Df|$]{Equilibrium states for interval maps:\\ the potential $\mathbf{-t\, { \rm\bf{log} } |Df|}$}
\author{Henk Bruin, Mike Todd}
\thanks{
This research was supported by EPSRC grant GR/S91147/01}
\subjclass[2000]{37D35, 37D25, 37E05}
\keywords{Equilibrium states, thermodynamic formalism, interval maps, non-uniform hyperbolicity}

\begin{abstract}
\noindent Let $f:I \to I$ be a $C^2$ multimodal interval map satisfying polynomial growth of the derivatives along critical
orbits. We prove the existence and uniqueness of
equilibrium states for the potential $\phi_t:x\mapsto
-t\log|Df(x)|$  for $t$ close to $1$, and also that the pressure function
$t \mapsto P(\phi_t)$ is analytic on an appropriate interval near $t = 1$.
\end{abstract}

\maketitle

\section{Introduction}\label{sec:intro}
Thermodynamic formalism ties potential functions $\phi$ to
invariant measures of a dynamical system $(X,f)$. The aim is to
identify and prove uniqueness of a measure $\mu_\phi$ that
maximises the {\em free energy}, i.e., the sum of the entropy and
the integral over the potential.  In other words
\[
h_{\mu_\phi}(f) + \int_X \phi~d\mu_\phi  = P(\phi) := \sup_{\nu
\in \M_{erg}} \left\{  h_{\nu}(f) + \int_X \phi \
d\nu:-\int_X\phi~d\nu<\infty \right\}
\]
where $\M_{erg}$ is the set of all ergodic $f$-invariant Borel
probability measures. Such measures are called {\em equilibrium
states}, and $P(\phi)$ is the {\em pressure}. This theory
was developed by Sinai, Ruelle and Bowen \cite{Sinai, Bowen, Ruelle} in
the context of H\"older potentials on hyperbolic dynamical
systems, and has been applied to Axiom A systems, Anosov
diffeomorphisms and other systems too, see e.g.\
\cite{Baladi,Kbook} for more recent expositions. Apart from
uniqueness, it was shown in this context that the density
$\frac{d\mu_\phi}{dm_\phi}$ of the invariant measure with respect
to $\phi$-conformal measure $m_\phi$ is a fixed point of the
transfer operator $(\L_\phi h)(x) = \sum_{f(y) = x} e^{\phi(y)}\
h(y)$. Moreover, $\mu_\phi$ is a Gibbs measure, i.e., there is a
constant $K > 0$ \st
\[
\frac 1K \le \frac{\mu_\phi(\c_n)}{e^{\phi_n(x) - n P(\phi)} } \le
K
\]
for all $n \in \N$, all $n$-cylinder sets $\c_n$ and any $x \in
\c_n$. Here $\phi_n(x):=\phi(f^{n-1}(x))+ \cdots +\phi(x)$.

In this paper we are interested in interval maps $(I,f)$ with
nonempty set $\Crit$ of critical points. These maps are, at best,
only non-uniformly hyperbolic.  We say that $c$ is a non-flat
critical point of $f$ if there exists a diffeomorphism $g_c:\R \to
\R$ with $g_c(0)=0$ and $1<\ell_c<\infty$ \st for $x$ close to
$c$, $f(x)=f(c)\pm|\phi_c(x-c)|^{\ell_c}$.  The value of
$\ell_c$ is known as the \emph{critical order} of $c$.
Let $\ell_{max} = \max\{ \ell_c : c \in \Crit\}$.
Throughout, $\H$ will be the collection of $C^2$ interval maps
with finitely many branches and only non-flat critical points.
There is a finite partition $\P_1$ into maximal intervals on which
$f$ is monotone.  Let us call this partition the {\em
branch partition}. We will assume throughout that $\vee_n \P_n$
generates the Borel $\sigma$-algebra. Note that if $f\in \H$ is
$C^2$ and has no attracting cycles then $\vee_n \P_n$ generates
the Borel $\sigma$-algebra, see \cite{MSbook}.  (The $C^2$
assumption precludes wandering sets, which are not very
interesting from the measure theoretic point of view anyway.)

The principal examples of maps in $\H$ are unimodal maps with
non-flat critical point.  Equilibrium states (in particular of the
potential $\phi_t := -t\log |Df|$) have been studied in this case by
various authors  \cite{HKeq, BrK, KN, Ledrap, StP}, using transfer
operators. The transfer operator, in combination with Markov
extensions, proved a powerful tool for so-called Collet-Eckmann
unimodal maps (see \eqref{eq:CE} below) for Keller and Nowicki
\cite{KN}, who showed that an appropriately weighted version of the
transfer operator is quasi-compact. To our knowledge, however,
these methods cannot be applied to non-Collet-Eckmann maps.

A less direct approach was taken by Pesin and Senti, results which were announced in \cite{PS2}, with details given in \cite{PS1}:
they used an inducing scheme $(X,F,\tau)$ (where $\tau$ is the
inducing time), a hyperbolic expanding
full branched map, albeit with infinitely many branches,
to find a unique equilibrium state $\mu_{\Phi_t}$
for the lifted potential $\Phi_t$. This equilibrium state is then projected
to the interval to give a measure $\mu_{\phi_t}$, a candidate equilibrium state for the system $(I,f,\phi_t)$.  It is proved that in the case where $f$ is a unimodal map
satisfying the strong exponential growth along critical orbits given in \cite{Se},
$\mu_{\phi_t}$ is a true equilibrium state for the whole system.  The down-side
for the more general case is that $\mu_{\phi_t}$ is only an `equilibrium state'
within the class of measures that are {\em compatible} to the inducing scheme, i.e., the induced map $F=f^\tau$ is defined for all iterates $\mu$-a.e. on $X$ and the inducing time $\tau$ is
$\mu_F$-integrable (here $\mu_F$ is the `lift' of $\mu$, see below).  A priori, the `equilibrium states' obtained in this way may not be true equilibrium states for the whole system, and different inducing schemes may lead
to different measures $\mu_{\phi_t}$.  Indeed, there exist measures with good properties which lift to some inducing schemes, but not to others: for example if $X$ is small then the set of points which never enter $X$ under iteration by $f$ can support measures of positive entropy.  Furthermore, inducing schemes are not always readily
available in general.

In this paper we show how to create `natural' inducing schemes and how to compare measures which `lift to' different schemes.

Our results are the first to deal with equilibrium states for the potential $\phi_t:x\mapsto -t\log|Df(x)|$ when $f$ is not Collet-Eckmann.   (We emphasise that the corresponding theory in \cite{PS1} considers a particular set of maps Collet-Eckmann maps  close to the Chebychev map.)  We also prove results on the analyticity of $t\mapsto P(\phi_t)$.

The Lyapunov exponent of a measure $\mu$ is defined as $\lambda(\mu) := \int_I
\log |Df|~d\mu$. Let $\M_{erg}$ be the set of all ergodic
$f$-invariant probability measures, and
\[
\M_+ = \left\{ \mu \in \M_{erg} :
\lambda(\mu) > 0,\ \supp(\mu) \not\subset
\orb(\Crit) \right\}.
\]
Measures $\mu$ with $\supp(\mu) \subset \orb(\Crit)$ are atomic.  Atomic measures in $\M_{erg}$ must be supported on periodic cycles.  So if  $\supp(\mu) \subset \orb(\Crit)$ and $\lambda(\mu)>0$, $\mu$ must be supported on a hyperbolic repelling periodic cycle, and thus the corresponding critical point must be preperiodic.  (Note that for $t \le 0$ such a situation can produce non-uniqueness of equilibrium states, see \cite{MSm1} and Section~\ref{sec:hypo}.)

\begin{theorem} \label{thm:poly}
Let $f \in \H$ be transitive with negative Schwarzian derivative
and let $\phi_t := -t\log |Df|$ for $t \in \R$.
Suppose that
for some $t_0 \in (0,1)$, $C > 0$ and $\beta > \ell_{max}(1+\frac1{t_0})
- 1$,
\begin{equation}\label{eq:polynomial}
|Df^n(f(c))| \ge C n^\beta \quad \mbox{ for all } c \in \Crit
\mbox{ and } n \ge 1.
\end{equation}
Then there exists $t_1 \in (t_0,1)$ \st the following hold:
\begin{itemize}
\item for every $t \in [t_1,1]$,
$(I,f,\phi_t)$ has an equilibrium state $\mu_{\phi_t} \in \M_+$;

\item
if $t_1 < t < 1$, then $\mu_{\phi_t}$ is the unique equilibrium state in $\M_{erg}$
and a compatible inducing scheme with respect to which $\mu_{\phi_t}$
has exponential tails;

\item if $t = 1$, then there may be other equilibrium states in
$\M_{erg} \sm \M_+$.   However, for $\mu_{\phi_1}\in \M_+$ there is a compatible inducing scheme with respect to which $\mu_{\phi_1}$ has polynomial tails;

\item the map $t \mapsto P(\phi_t)$ is analytic on
$(t_1,1)$.
\end{itemize}
\end{theorem}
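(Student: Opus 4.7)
The plan is to pass to a countable Markov-shift extension of $(I,f)$ via an inducing scheme $(X,F,\tau)$, apply the thermodynamic formalism of Sarig and Mauldin--Urba\'nski on the shift to produce a unique equilibrium state $\mu_{\Phi_t}$ for an appropriate induced potential $\Phi_t$, project it back to $I$ by Kac's formula to obtain the candidate $\mu_{\phi_t}$, and then transfer uniqueness and analyticity from the shift to $(I,f)$ via a lifting argument and the implicit function theorem.

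First I would construct the inducing scheme. Choose a small interval $X\Subset I\sm\Crit$ whose backward orbit is dense, and carry out a pullback construction to extract countably many disjoint branch intervals $\{X_i\}\subset X$ with inducing times $\tau_i$ so that $f^{\tau_i}\colon X_i\to X$ is a diffeomorphism onto $X$; the Koebe principle (available from negative Schwarzian and non-flatness of $\Crit$) gives uniformly bounded distortion on each branch. Branches with large $\tau_i$ necessarily shadow $\Crit$, and \eqref{eq:polynomial} then converts this shadowing into a quantitative bound of the form $|X_i|\lesssim \tau_i^{-\beta/(\ell_{max}-1)}$. The hypothesis $\beta>\ell_{max}(1+1/t_0)-1$ is tuned precisely so that the pressure series $\sum_i |X_i|^{\,t}e^{-\tau_i P(\phi_t)}$ converge for $t$ in a neighbourhood of $1$ in $[t_0,1]$; this is the quantitative input needed in the next step.

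Having this, set $\Phi_t^{(p)}:=\sum_{k=0}^{\tau-1}\phi_t\circ f^k-\tau p$. The number $P(\phi_t)$ is characterised as the unique $p$ for which the Gurevich pressure $P_G(\Phi_t^{(p)})=0$, and the countable-shift formalism then produces a unique invariant Gibbs equilibrium state $\mu_{\Phi_t}$ for $\Phi_t^{(P(\phi_t))}$. Tail estimates for $\tau$ follow from the Gibbs property: for $t<1$ a pressure gap at $p=P(\phi_t)$ yields \emph{exponential} decay of $\mu_{\Phi_t}(\tau>n)$, while at $t=1$ this gap degenerates and only \emph{polynomial} tails, inherited from the bound on $|X_i|$, survive. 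Defining
\[
\mu_{\phi_t}(A):=\frac{1}{\int\tau\,d\mu_{\Phi_t}}\sum_{i=0}^{\infty}\int \mathbf 1_A\circ f^i\cdot \mathbf 1_{\{\tau>i\}}\,d\mu_{\Phi_t},
\]
Abramov's formula for the entropy, together with the analogous identity for $\int\phi_t\,d\mu_{\phi_t}$, verifies that $\mu_{\phi_t}\in\M_+$ maximises the free energy, establishing the first bullet and the tail claims in the second and third bullets.

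For uniqueness I would take any other $\nu\in\M_{erg}$ saturating $P(\phi_t)$. Since $t>0$, free-energy maximisation forces $\lambda(\nu)>0$, and when $\supp\nu\not\subset\orb(\Crit)$ a Pliss-type argument produces a positive $\nu$-density of hyperbolic returns to $X$, so $\nu$ lifts to an $F$-invariant measure $\nu_F$ on the shift with $\int\tau\,d\nu_F<\infty$; Abramov reverses to show $\nu_F$ is an equilibrium state for $\Phi_t^{(P(\phi_t))}$, forcing $\nu_F=\mu_{\Phi_t}$ and hence $\nu=\mu_{\phi_t}$. For the last bullet, apply the implicit function theorem to $(t,p)\mapsto P_G(\Phi_t^{(p)})$ on a Banach space of locally H\"older potentials on the shift: on $(t_1,1)$ the exponential tails give $\partial_p P_G=-\int\tau\,d\mu_{\Phi_t}<0$, and $t\mapsto \Phi_t^{(p)}$ is real-analytic in the norm, so $t\mapsto P(\phi_t)$ is analytic. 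The hardest step is the lifting argument: for non-Collet--Eckmann maps orbits can linger near $\Crit$, and one must control the frequency of hyperbolic returns to $X$ quantitatively enough that $\tau\in L^1(\nu_F)$. This is exactly why uniqueness inside all of $\M_{erg}$ degrades at $t=1$: integrability becomes borderline, and measures supported on preperiodic critical cycles (positive Lyapunov exponent, yet outside the lifting regime) can in principle also attain $P(\phi_1)$.
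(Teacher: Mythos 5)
Your overall plan---induce, run Sarig's thermodynamic formalism on the countable shift, project by Kac/Abramov, and get analyticity from the implicit function theorem---does match the skeleton of the paper's argument, but there are several places where the outline glosses over the genuine difficulties, and two of them are fatal to the argument as written.

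The first is the claimed tail bound $|X_i|\lesssim \tau_i^{-\beta/(\ell_{max}-1)}$. Even granting this for each individual branch, the pressure series $\sum_i |X_i|^t e^{-\tau_i p}$ is summed over all branches, and the number of branches with $\tau_i = n$ grows like $e^{n(\htop(f)+o(1))}$. At $t=1$ this is harmless because $\sum_{\tau_i=n}|X_i|\le |X|$, but for $t<1$ one cannot simply interchange the sum and the power. The paper has to split into $(\sum_{\tau_i=n}|X_i|)^t$ and $(\#\{\tau_i=n\})^{1-t}$ via H\"older, and then control $\sum_{\tau_i=n}|X_i|$ by the elaborate binding-period/free-period decomposition adapted from Bruin--Luzzatto--van Strien (Proposition~\ref{prop:summable}), upgraded via \cite{BRSS} (Lemma~\ref{lem:BCC}) to allow unequal critical orders. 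A bound on a single $|X_i|$ alone cannot close this; you need a bound on the measure of a whole ``cluster'' of branches with the same binding-period itinerary, and a summation over clusters.

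The second and more structural gap is the uniqueness argument. You assert that for any other equilibrium state $\nu$ a ``Pliss-type argument produces a positive $\nu$-density of hyperbolic returns to $X$, so $\nu$ lifts'' to the same inducing scheme. This is false in general: as the paper explicitly warns, if $X$ is small there exist measures of positive entropy and positive Lyapunov exponent whose support avoids $X$ entirely, so they lift to \emph{some} inducing scheme but not to yours. Positive Lyapunov exponent alone gives liftability to the Hofbauer tower (Keller), not to a prescribed first-return set. The paper's resolution is precisely the machinery you are skipping: Lemma~\ref{lem:compat entro} (positive entropy forces the lifted measure to give uniform mass to a compact region $\hat I_R$ of the tower), Proposition~\ref{prop:unif scheme} (measures with free energy close to $P(\phi_t)$ must give positive mass to the fixed $\hat X^0$, because escape from $\hat X^0$ incurs a pressure deficit on a subsystem), and Lemma~\ref{lem:eq sees all} (the constructed equilibrium state is itself compatible to every sufficiently placed scheme). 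Without an argument of this type the Gurevich/Gibbs uniqueness on the shift only gives uniqueness among measures that happen to be compatible to your particular $(X,F,\tau)$---which is exactly the limitation of Pesin--Senti that the paper is designed to overcome.

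Two smaller issues: (i) the identity $P_G(\Phi_t^{(p)})=0 \iff p = P(\phi_t)$ is not automatic and is one of the things that has to be \emph{proved}; the paper does this via Case~4 of Proposition~\ref{prop:all_indu}, approximating $P(\phi_t)$ by free energies of compatible measures and invoking continuity of $S\mapsto P_G(\Psi_S)$ (Lemma~\ref{lem:press cts}). (ii) For the implicit-function-theorem step, Sarig's positive-discriminant analyticity theorem requires the inducing scheme to be realised as a first-return map; the paper constructs the Young tower $\Delta$ so the first return to $\Delta_0$ is isomorphic to $(X,F,\tau)$, and then transfers $P_G$ on $\Delta$ to $P_+$ on $I$ via Fiebig--Fiebig--Yuri and Lemma~\ref{lem:uniform scheme for nat}. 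Your outline does not build this bridge, and the closing sentence about ``measures supported on preperiodic critical cycles'' as the $t=1$ obstruction is off target: such cycles are hyperbolic, hence irrelevant near $t=1$; the actual obstruction at $t=1$ is zero-Lyapunov-exponent measures (e.g. $\mu_{\omega(c)}$ for Fibonacci-like maps), as discussed in Section~\ref{sec:hypo}.
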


We refer to this situation as the \emph{summable case}.
Note that for $t=1$ the measure $\mu_{\phi_1}\in \M_+$ is an absolutely continuous invariant measure (acip).  Therefore this result improves on the polynomial case of
\cite[Proposition 4.1]{BLS}, since in that theorem the polynomial decay of the tails was given under the above conditions, but also assuming that the critical points must all have the same order.  Results of \cite{BRSS} enable us to drop this assumption.  As was shown in \cite{BLS}, this tail decay rate implies that
the decay of correlations is at least polynomial.

As in the theorem, for $t = 1$ equilibrium states with zero
Lyapunov exponent are possible, see Section~\ref{sec:hypo} for details.
Let us explain why for $t < 1$, equilibrium states must
have $\lambda(\mu) > 0$.
The pressure function $t \mapsto P(\phi_t)$ is a continuous
decreasing function.  As in \cite{BRSS}, condition \eqref{eq:polynomial} implies the
existence of an acip  $\mu_1$ with $\lambda(\mu_1) > 0$, which is also a
equilibrium state for the potential $\phi_1 = -\log |Df|$. It
follows that
\begin{equation}\label{eq:pressure}
P(\phi_t) \ge (1-t) \lambda(\mu_1) \qquad \mbox{ for all } t \in
\R,
\end{equation}
so if $t< 1$ we have $P(\phi_t) > 0$.  By \cite{Prz}, we have $\lambda(\mu) \ge 0$ for any invariant measure, so Ruelle's inequality \cite{Ruelleineq} implies that $h_\mu(f) \le
\lambda(\mu)$.  Thus (for $t < 1$) equilibrium
states have positive Lyapunov exponent because
$\lambda(\mu) = 0$ implies $P(\phi_t) = 0$.

Notice that for $t \le 0$, the
potential $-t\log|Df|$ is upper semicontinuous, and the entropy
function $\mu\mapsto h_\mu(f)$ is upper semicontinuous, as
explained in \cite{Kbook}. This guarantees the existence of
equilibrium states for $(I,f)$ when $t \le 0$, regardless of
whether \eqref{eq:polynomial} holds or not.

A stronger condition than \eqref{eq:polynomial} is the
\emph{Collet-Eckmann condition} which states that there exist $C,
\alpha > 0$ \st
\begin{equation}\label{eq:CE}
|Df^n(f(c))| \ge C e^{\alpha n} \mbox{ for all } c \in \Crit \mbox{
and } n \in \N.
\end{equation}
This condition implies that $\lambda(\mu) > 0$
for every $\mu \in \M_{erg}$, see e.g.\ \cite{NSa} (and \cite{BS}
for the proof in the multimodal case).
In the unimodal case, the difference between Collet-Eckmann and
non-Collet-Eckmann maps can be seen from the behaviour of the
pressure function
at $t=1$, as follows from \cite{NSa}.  Indeed,
if \eqref{eq:polynomial} holds but not \eqref{eq:CE}, then there are
periodic orbits with Lyapunov exponents arbitrarily close to $0$,
and hence $P(\phi_t) = 0$ for $t \ge 1$. This is
regardless of the existence of equilibrium states, which, for $t >
1$, can only be measures for which $\lambda(\mu) = h_\mu(f) = 0$.
This means that the function $t\mapsto P(\phi_t)$
is not differentiable at $t=1$: we say that there is a
\emph{phase transition} at $1$.
See Section~\ref{sec:hypo} for more details on the phase transition,
and on maps without equilibrium states.

For unimodal Collet-Eckmann maps, the map $t\mapsto P(\phi_t)$ is
analytic in a neighbourhood of $1$, as was shown in \cite{BrK}.
The following theorem (the proof of which introduces many of the
ideas used for Theorem~\ref{thm:poly}) generalises this result
to all $f\in \H$ satisfying \eqref{eq:CE}, and gives results on equilibrium states also.

\begin{theorem}\label{thm:CE}
Suppose $f \in \H$ is transitive with negative Schwarzian derivative and $\phi_t = -t \log|Df|$.
If $f$ is Collet-Eckmann, then there exist $t_1<1<t_2$ \st $f$
has a unique equilibrium state $\mu_{\phi_t}$  for $t \in (t_1,t_2)$.
Moreover, $\mu_{\phi_t} \in \M_+$, there is a compatible inducing scheme
with respect to which $\mu_{\phi_t}$ has exponential tails,
and the map $t\mapsto P(\phi_t)$ is analytic in $(t_1,t_2)$.
\end{theorem}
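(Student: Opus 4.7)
My plan is to adapt the inducing-scheme approach of Pesin--Senti \cite{PS1} to the full Collet--Eckmann class. The first step is to build a full-branched inducing scheme $(X,F,\tau)$, with $X$ a small interval well inside the dynamical core and $F=f^\tau$ a countably branched uniformly expanding Markov map. Under \eqref{eq:CE} such a scheme should exist with the following bonus properties that I would either establish or extract from the literature: $F$ has uniformly bounded distortion on each branch, $|DF|\ge\lambda>1$, and the inducing time satisfies $\Leb\{\tau>n\}\le C e^{-\gamma n}$ for some $\gamma>0$. The backbone is as usual negative Schwarzian plus the Koebe principle, with the exponential growth along critical orbits used to force exponential tails of $\tau$.

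The second step is to lift the potential by
\[
\Phi_t(x)=\sum_{k=0}^{\tau(x)-1}\phi_t(f^k(x))
\]
and to use the thermodynamic formalism for countable Markov shifts (Mauldin--Urbanski, Sarig) on $(X,F,\Phi_t)$. Bounded distortion and exponential tails of $\tau$ show that $\Phi_t$ is locally Hölder with summable variations, uniformly in $t$ near $1$. For each such $t$ I would solve the pressure equation $P_F(\Phi_t-p\tau)=0$ in $p$; this unique value $p=p(t)$ is then identified with $P(\phi_t)$. The transfer operator $\L_{\Phi_t-p\tau}$ acts with a spectral gap on an appropriate space of locally Hölder densities, producing a unique $F$-invariant probability $\mu_{\Phi_t}$, and exponential tails of $\tau$ plus the spectral gap give $\int\tau\,d\mu_{\Phi_t}<\infty$. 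Projecting by the Kac/Abramov formula yields a candidate $\mu_{\phi_t}\in\M_+$ with exponential tails with respect to the scheme, and Abramov's entropy identity shows that the free energy of $\mu_{\phi_t}$ really equals $P(\phi_t)$, so $\mu_{\phi_t}$ is a genuine equilibrium state.

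For uniqueness, I would use that under Collet--Eckmann every $\mu\in\M_{erg}$ has $\lambda(\mu)>0$ \cite{NSa,BS}, and show that any other equilibrium state $\mu$ lifts to the inducing scheme, i.e.\ charges $X$ with $\mu$-integrable first-return time; the uniqueness on $(X,F,\Phi_t-p(t)\tau)$ then forces $\mu=\mu_{\phi_t}$. For analyticity, $(t,p)\mapsto\Phi_t-p\tau$ depends real-analytically on parameters in the Banach algebra used for the transfer operator, so $(t,p)\mapsto P_F(\Phi_t-p\tau)$ is real-analytic by standard spectral perturbation theory; the implicit function theorem applied to $P_F(\Phi_t-p\tau)=0$ gives analyticity of $t\mapsto P(\phi_t)$ on an open neighbourhood $(t_1,t_2)$ of $1$.

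The main obstacle I expect is the lifting step in the uniqueness argument: showing that an arbitrary ergodic equilibrium state has a $\mu$-integrable first-return time to the chosen domain $X$, so that the notions of equilibrium state on $(I,f)$ and on $(X,F)$ can be compared. This is precisely the issue the authors flag in the introduction as the reason one needs tools for comparing measures that lift to different inducing schemes; resolving it cleanly will force a careful choice of $X$ relative to $\supp(\mu)$ and systematic use of positivity of $\lambda(\mu)$ (via Pliss-type hyperbolic returns) to manufacture integrable inducing times for arbitrary competing measures.
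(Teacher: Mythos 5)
Your strategy is recognisably the Pesin--Senti approach: build a single expanding Markov inducing scheme $(X,F,\tau)$ with exponential Lebesgue tails, solve $P_F(\Phi_t-p\tau)=0$ for $p=p(t)$ by transfer-operator theory, push $\mu_{\Phi_t}$ down by Abramov/Kac, and try to get analyticity from a spectral gap. This is close in spirit to what the paper does, but it has a genuine gap precisely where you yourself flag one, and your suggested fix would not close it.

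The unresolved step is the identification $p(t)=P(\phi_t)$, and with it the uniqueness claim. Solving the pressure equation for the induced system produces the unique equilibrium state among measures \emph{compatible to that particular scheme} $(X,F,\tau)$; nothing in your outline rules out a competing $\nu\in\M_{erg}$ with $\lambda(\nu)>0$ that gives positive mass to $X$ but has $\int\tau\,d\nu_F=\infty$ (or, worse, does not lift at all to your scheme --- e.g.\ $\supp\nu$ might meet $X$ only in a measure-zero way, or every $F$-orbit of $\nu$-typical points might exit the Markov structure), and a priori such a $\nu$ could have free energy equal to or exceeding $p(t)$. Positivity of $\lambda(\nu)$ and Pliss returns manufacture \emph{some} inducing scheme adapted to $\nu$ (this is essentially Theorem~\ref{thm:lyap lift}), not the fixed one you constructed; having many inducing schemes does not by itself let you compare measures across schemes, and the whole point of the paper's extra machinery --- Lemmas~\ref{lem:compat entro}, \ref{lem:free energy bound}, \ref{lem:eq sees all} and Propositions~\ref{prop:all_indu}, \ref{prop:unif scheme} --- is to make exactly that comparison. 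Concretely, the paper constructs inducing schemes as first-return maps on the Hofbauer tower (Lemma~\ref{lem:exten}), shows that near-optimal measures have entropy bounded away from zero (Lemma~\ref{lem:free energy bound}) hence lift with uniform mass to a compact part of the tower (Lemma~\ref{lem:compat entro}), and then uses transitivity on the tower (Lemma~\ref{lem:eq sees all}) together with a cover argument (Proposition~\ref{prop:unif scheme}) to force all such measures to charge a single fixed $\hat X$. Without some substitute for this, the candidate $\mu_{\phi_t}$ is an "equilibrium state within the class of compatible measures", exactly the situation the introduction warns cannot be promoted to a genuine equilibrium state for free.

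A second, more minor, discrepancy: for analyticity the paper does not invoke a spectral gap of $\L_{\Phi_t-p\tau}$ directly but instead passes through the Young tower and Sarig's positive-discriminant criterion together with the Fiebig--Fiebig--Yuri variational principle, precisely because Sarig's analyticity theorem for countable Markov shifts requires verifying that the chosen direction $\upsilon$ has the tower potential satisfying a Bowen-type variation bound \eqref{eq:FFY} and that the discriminant is positive (equivalently: exponential tails of $\tau$ with respect to $\mu_{\Psi}$, not with respect to Lebesgue). Your spectral-perturbation route can be made to work in the countable-Markov setting, but you would still have to verify that perturbing $t$ keeps you inside the regime where the operator acts with a gap, and the most direct way to do that is again the positivity of the discriminant. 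So the analyticity step is salvageable, but the uniqueness/identification step as written is not, and fixing it requires introducing the Hofbauer-tower comparison arguments that the paper develops.
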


In fact, the techniques used to prove this theorem also give
analyticity of the pressure for the special Collet-Eckmann maps
considered in \cite{PS1} for all $t$
in a neighbourhood of $[0,1]$.

{\bf Lifting measures.} Our main theorems deal with equilibrium states in $\M_+$. Although measures in $\M_+$ may not always be compatible to a specific inducing scheme, they are all compatible to some inducing scheme.  Given an inducing scheme $(X,F, \tau)$, we say that a measure $\mu_F$ is a \emph{lift} of $\mu$ if for all $\mu$-measurable subsets $A\subset I$,
\begin{equation} \mu(A) = \frac1{\int_X \tau \ d\mu_F} \sum_i \sum_{k = 0}^{\tau_i-1} \mu_F( X_i \cap f^{-k}(A)). \label{eq:lift}
\end{equation}
Conversely, given a measure $\mu_F$ for $(X,F)$, we say that
$\mu_F$ \emph{projects} to $\mu$ if \eqref{eq:lift} holds.

Let $X^\infty = \cap_n F^{-n}(\cup_i X_i)$ be the set of
points on which all iterates of $F$ are defined. The following
theorem gives us a method for finding inducing schemes, which are
naturally related to measures of positive Lyapunov exponent.

\begin{theorem}\label{thm:lyap lift}
If $\mu \in \M_+$, then there is an inducing scheme $(X,F,\tau)$
and a measure $\mu_F$ on $X$ \st $\int_X \tau \ d\mu_F < \infty$.
Here $\mu_F$ is the lifted measure of $\mu$ (i.e., $\mu$ and
$\mu_F$ are related by \eqref{eq:lift}).  Moreover, if $\Omega$
is the transitive component supporting $\mu$ then
$\overline{X^\infty}=X\cap\Omega$.

Conversely, if $(X,F,\tau)$ is an inducing scheme and $\mu_F$ an
ergodic $F$-invariant measure \st $\int_X \tau d\mu_F < \infty$, then
$\mu_F$ projects to a measure $\mu \in \M_{erg}$ with positive
Lyapunov exponent.
\end{theorem}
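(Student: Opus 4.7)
The plan is to split the proof into its two directions. The forward direction---producing an inducing scheme and a finite lifted measure from an ergodic $\mu\in\M_+$---is the substantive part; the converse is essentially formal, via Kac/Abramov together with the built-in expansion of any inducing scheme.

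For the forward direction, the positive Lyapunov exponent combined with the Pliss lemma supplies, on a set $H\subset I$ of positive $\mu$-measure, infinitely many \emph{hyperbolic times} for each $x\in H$: indices $n$ at which $|Df^n(x)|\ge e^{\lambda(\mu)n/2}$ and at which an interval of some fixed length around $f^n(x)$ pulls back with bounded distortion along the orbit of $x$. Because $\supp(\mu)\not\subset \orb(\Crit)$, one can select $p\in\supp(\mu|_H)\sm\orb(\Crit)$. I would then build a \emph{nice interval} $X\ni p$---an interval whose forward boundary orbit never re-enters its interior---which is available in the transitive negative-Schwarzian setting and, I expect, treated in an earlier section of the paper. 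Taking $F$ to be the first return map to $X$ with inducing time $\tau$, the nice property gives a Markov, full-branched structure on each branch, while hyperbolic-time estimates combined with the Koebe principle (from negative Schwarzian) yield uniform expansion and bounded distortion branch by branch.

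To obtain the lifted measure, set $\mu_F:=\mu|_X/\mu(X)$. Hyperbolic-time returns force $\mu$-a.e.\ $x\in X$ to return to $X$ infinitely often, so $\tau<\infty$ $\mu_F$-a.e., and Kac's lemma yields
\[
\int_X \tau \, d\mu_F \,=\, \frac{1}{\mu(X)} \,<\, \infty.
\]
The standard tower-projection identity then gives \eqref{eq:lift}, i.e.\ $\mu_F$ is a lift of $\mu$. For the equality $\overline{X^\infty}=X\cap\Omega$: on the one hand, any $F$-orbit defined for all time stays in the transitive component $\Omega$, hence $X^\infty\subset X\cap\Omega$; conversely, transitivity of $f|_\Omega$ together with the nice-interval branch structure guarantees that every relatively open subset of $X\cap\Omega$ contains a point whose $f$-orbit returns to $X$ infinitely often, giving density.

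For the converse, given $(X,F,\tau)$ and an ergodic $F$-invariant $\mu_F$ with $\int_X\tau\, d\mu_F<\infty$, define $\mu$ by \eqref{eq:lift}. A routine telescoping argument using $F_*\mu_F=\mu_F$ gives $f$-invariance, and ergodicity of $\mu$ under $f$ is inherited because any $f$-invariant set is in particular $F$-invariant $\mu_F$-a.e. Finally, Abramov's formula
\[
\lambda(\mu) \,=\, \frac{\int_X \log|DF|\, d\mu_F}{\int_X \tau\, d\mu_F},
\]
combined with the uniform expansion $|DF|\ge\kappa>1$ built into the definition of an inducing scheme used in this paper, gives $\lambda(\mu)>0$. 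I expect the main obstacle to be the nice-interval/hyperbolic-time construction in the forward direction: one must position $X$ so that the induced branches carry uniform distortion bounds \emph{and} so that $\mu$-a.e.\ orbit actually meets $X$ (otherwise the lift would be zero), requirements that together force careful use of both negative Schwarzian and the positive-exponent recurrence coming from Pliss.
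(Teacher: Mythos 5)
Your proposal takes a genuinely different route from the paper's, and the converse direction is essentially sound modulo one small point. But the forward direction has a real gap at its core step.

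The paper's forward argument lifts $\mu$ to the Hofbauer tower $(\hat I,\hat f)$ via Keller's liftability theorem, takes the first return map $\hat F$ to a set $\hat X=\pi^{-1}(\c_n)\cap D$ (a cylinder compactly contained in a tower domain), and projects back to $I$. The crucial payoff of this detour is automatic: because $\hat f$ is Markov on the tower, each return branch of $\hat F$ maps its domain monotonically onto $\hat X$, and the compact containment of $\c_n$ in $\pi(D)$ supplies Koebe space. Kac then gives $\int\tau\,d\mu_F=1/\hat\mu(\hat X)<\infty$. Your plan instead tries to achieve the same thing directly in the base, via Pliss/hyperbolic times and a nice interval $X\ni p$. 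The claim that ``the nice property gives a Markov, full-branched structure'' is where it breaks down: the first return map to a nice interval $X$ has full branches only on the return domains whose boundaries are cut by $\partial X$-preimages; a return domain whose boundary meets a preimage of a critical point $c\notin X$ produces a branch with image strictly smaller than $X$ whenever the postcritical orbit enters $\operatorname{int}(X)$. Since $\orb(\Crit)$ can be dense on $\Omega$, this obstruction cannot be avoided by shrinking $X$ or by choosing $p\notin\orb(\Crit)$. (This is also why your invocation of Koebe ``branch by branch'' is suspect: the first return map need not extend diffeomorphically past $X$.) Discarding the non-full branches repairs the Markov structure but destroys the Kac identity $\int\tau\,d\mu_F=1/\mu(X)$, because the scheme is then no longer a first return map in the base; and building a genuine inducing scheme purely from hyperbolic times is a substantial construction of its own, not a first return map, so Kac again does not apply as written. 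The Hofbauer tower is precisely the device that makes the object both a first return map (so Kac applies) and automatically full-branched. It also lets the paper handle the Cantor-set transitive components (cases ($\Omega$2), ($\Omega$3)) and prove $\overline{X^\infty}=X\cap\Omega$ via the graph structure of Lemma~\ref{lem:trans_subgraph}; your density argument for that equality is too quick to survive without this structure.

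Two smaller points. First, in the converse you assert $|DF|\ge\kappa>1$ is ``built into the definition of an inducing scheme,'' but the paper's definition only asks for bounded distortion; the paper derives uniform expansion by observing that $F^n$ has negative Schwarzian branches, hence bounded distortion uniformly in $n$, and so some iterate $F^N$ is uniformly expanding. You need this derivation, not an assumption. Second, your remark that ``hyperbolic-time returns force $\mu$-a.e.\ $x\in X$ to return infinitely often'' is already supplied by Poincar\'e recurrence once $\mu(X)>0$ and $\mu$ is invariant, so hyperbolic times are not the right reason — the real work they would need to do (Markov structure + Koebe space) is exactly the part that fails.
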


We would like to highlight another important set of results in this paper, which will be explained more fully later:
We will also show that all `relevant measures' in this paper lift to a fixed inducing scheme, see Proposition~\ref{prop:unif scheme} and Lemmas~\ref{lem:eq sees all} and \ref{lem:uniform scheme for nat}.

The potential $\phi_t$ (or $-t\log|Jf|$ in a wider setting,
where $Jf$ is the Jacobian of the map)
has geometric importance if $t$ is the dimension
of the phase space, because then the equilibrium state
can often be shown to be absolutely continuous with respect to $t$-dimensional
Hausdorff measure.
One can also consider other potentials: e.g.\ the seminal paper
by Bowen \cite{Bowen} applies to the class of \hold potentials.
In the setting  of interval maps, interesting results and examples were
given by Hofbauer and Keller \cite{HKeq} for potentials with bounded variation.
Our methods extend to such potentials as well.  We develop this theory in \cite{BTeqrange}.

The paper is organised as follows. Section~\ref{sec:prel} gives
preliminaries on (Gurevich) pressure, recurrence, and gives an
important result  on symbolic systems, due to Sarig. Also we
review basic results for interval maps. Section~\ref{sec:induce}
explains how to find inducing schemes using the Hofbauer tower,
which have the important property of being first return map on
this tower, even if the inducing scheme is not the first return on
the original system $(I,f)$. Theorem~\ref{thm:lyap lift} is proved
here as well. In Section~\ref{sec:our prelims} we prove
Proposition~\ref{prop:all_indu}, which gives the basic framework
of the existence and uniqueness proofs. Section~\ref{sec:poly}
is devoted to the main part of the proofs of Theorems~\ref{thm:poly} and \ref{thm:CE}  (using estimates from \cite{BLS}).
In Section~\ref{sec:tails}, we show that most equilibrium states in
this paper can be obtained from a Young tower with exponential
tails (see \cite{Y} for definitions), and discuss several
consequences of this remarkable fact, including the concluding
part of Theorems~\ref{thm:poly} and \ref{thm:CE}: the analyticity of the pressure function. Finally in Section~\ref{sec:hypo}, we discuss the hypotheses of our main theorems and give counter-examples that show that these
hypotheses cannot be easily relaxed.

{\em Acknowledgements:} We would like to thank Ian Melbourne,
Mariusz Urba\'nski, Peter Raith and Beno\^{\i}t Saussol for
fruitful discussions and comments on (earlier) versions of this paper.
We are especially grateful to Neil Dobbs whose remarks have led to
substantial clarification and strengthening of parts of this paper.

\section{Preliminaries}\label{sec:prel}

\subsection{Measures and Pressure}

If $(X,T)$ is a dynamical system with potential $\Phi:X \to \R$,
then the measure $m$ is {\em $\Phi$-conformal} if
\[
m(T(A)) = \int_A e^{-\Phi(x)}~dm(x)
\]
whenever $T:A \to T(A)$ is one-to-one. In other words, $dm\circ
T(x) = e^{-\Phi(x)}dm(x)$. We define the transfer operator for the
potential $\Phi$ as
\[
\L_{\Phi}g(y) := \sum_{T(y) = x} e^{\Phi(y)} g(y).
\]
We want to show that whatever inducing scheme we start with, the
invariant measure we get on $I$ is unique.  One of the key tools is
the following theorem which is the main result of \cite{SaBIP}.
Assume that $\S_1 = \{ X_i \}$ is a Markov partition of $X$ \st
$T:X_i \to X$ is injective for each $X_i \in \S_1$. We say that
$(X,T)$ has the \emph{big images and preimages (BIP)} property if,
there exist $X_1,\ldots, X_N\in \S_1$ \st for every $X_k \in \S_1$
there are $i,j \in \{ 1, \ldots, N\}$ and $x\in X_i$ \st $T(x) \in
X_k$ and $T^2(x) \in X_j$.

Suppose that $(X,T)$ is topologically mixing.  For every $X_i
\in \S_1$ and $n\ge 1$ let
\[
Z_n(\Phi, X_i):=\sum_{T^n x = x}e^{\Phi_n(x)}1_{X_i}(x),
\]   where $\Phi_n(x)= \sum_{j=0}^{n-1} \Phi \circ T^j(x)$.  Let
\[ Z_n^*(\Phi, X_i) := {\sum_{\stackrel{T^nx=x,}{T^kx \notin X_i \ \mbox{\tiny for}\ 0< k < n}} e^{\Phi_n(x)}} 1_{X_i}(x).
\]
We define the \emph{Gurevich pressure} of $\Phi$ as
\begin{equation}\label{eq:Gur}
P_G(\Phi) := \limsup_{n \to \infty}\frac 1n \log Z_n(\Phi, X_i).
\end{equation}
This limit exists, is independent of the choice of $X_i$ and it is
$> -\infty$, see \cite{Sathesis}. To simplify the notation, we
will often suppress the dependence of $Z_n(\Phi,X_i)$ and
$Z_n^*(\Phi,X_i)$ on $X_i$.  Furthermore, if $\|\L_\Phi
1\|_\infty<\infty$ then $P_G(\Phi)<\infty$, see Proposition 1 of
\cite{Sathesis}.

The potential $\Phi$ is said to be {\em recurrent} if
\begin{equation}
\sum_n \lambda^{-n} Z_n(\Phi) = \infty \mbox{ for } \lambda = \exp
P_G(\Phi).
\end{equation}
Moreover, $\Phi$ is called \emph{positive recurrent} if it is
recurrent and  $\sum_n n\lambda^{-n}Z^*_n(\Phi) = \infty$. We define
the {\em $n$-th variation} of $\Phi$ as
\begin{equation}\label{eq:var}
V_n(\Phi) := \sup_{\c_n \in \S_n} \sup_{x,y \in \S_n} |\Phi(x) -
\Phi(y)|,
\end{equation}
where $\S_n = \bigvee_{j=0}^{n-1} T^{-j}(\S_1)$ is the $n$-joint of
the Markov partition $\S_1$.

\begin{theorem}[\cite{SaBIP}] If $(X,T)$ is topologically mixing
and $\sum_{n\ge 1}V_n(\Phi)<\infty$, then $\Phi$ has an invariant
Gibbs measure if and only if $A$ has the BIP property and
$P_G(\Phi)<\infty$.  Moreover the Gibbs measure $\mu_\Phi$ has the
following properties \begin{itemize}
\item[(a)] If $h_{\mu_\Phi}(T) < \infty$ or $-\int \Phi d\mu_\Phi
< \infty$ then $\mu_\Phi$ is the unique equilibrium state (in
particular, $P(\Phi) = h_{\mu_\Phi}(T) + \int_X \Phi~d\mu_\Phi$);
\item[(b)] If $\|\L_\Phi 1\|_\infty<\infty$ then the Variational
Principle holds, i.e.,
$P_G(\Phi)=P(\Phi)$ ($=h_{\mu_\Phi}(T) + \int_X \Phi~d\mu_\Phi$);
\item[(c)] $\mu_\Phi$ is finite and $\mu_\Phi=\rho_\Phi~dm_\Phi$ where $\L_\Phi \rho_\Phi=\lambda \rho_\Phi$ and $\L_\Phi^*m_\Phi=\lambda m_\Phi$ for $\lambda=e^{P_G(\Phi)}$, i.e., $m_\Phi(TA) = \int_A e^{\Phi - \log \lambda}~dm_\Phi$;
\item[(d)]  This $\rho_\Phi$ is unique and $m_\Phi$ is the unique
$(\Phi-\log\lambda)$-conformal probability measure.
\end{itemize}
\label{thm:BIP}
\end{theorem}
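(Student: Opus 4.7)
My plan is to prove the existence direction by constructing the two pieces of the Gibbs measure separately via the Ruelle--Perron--Frobenius formalism adapted to the countable-alphabet setting, and then obtain the converse from the Gibbs estimate itself. Throughout, the role of BIP is to turn statements about one distinguished cylinder $X_1$ (where the Gurevich pressure is computed) into uniform statements over all cylinders, and the role of summable variation $\sum V_n(\Phi)<\infty$ is to give bounded distortion along cylinder sets.

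For the construction of the conformal measure $m_\Phi$, I would proceed by a Mauldin--Urba\'nski-type normalization. Fix a base state $X_1$ and consider the sequence of probability measures
\[
m_n := \frac{1}{Z_n(\Phi,X_1)} (\L_\Phi^*)^n \delta_{x_0}
\]
for some $x_0 \in X_1$. Finiteness of $P_G(\Phi)$ controls the exponential growth of the normalizing constants, summable variation gives that for any cylinder $\c_k$ one has $m_n(\c_k) \asymp e^{\Phi_k(y) - k\log\lambda}m_n(T^k\c_k)$ up to a constant independent of $n$, and BIP guarantees that for each $k$ there is a definite uniform lower bound $m_n(T^k\c_k) \ge \eta > 0$ coming from the finite collection $X_1,\dots,X_N$ of big-image states. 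This tightness lets me pass to a weak-$*$ subsequential limit $m_\Phi$, and the eigenequation $\L_\Phi^* m_\Phi = \lambda m_\Phi$ with $\lambda = e^{P_G(\Phi)}$ follows from the definition of $Z_n$.

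Next I would build the density by averaging: set $\rho_n := \frac1n\sum_{k=0}^{n-1} \lambda^{-k}\L_\Phi^k 1$. Summable variation implies that on each cylinder $\c_k$ the values of $\lambda^{-k}\L_\Phi^k 1$ are comparable up to a multiplicative distortion constant; BIP then gives a uniform upper bound (routing every preimage through one of finitely many states) and, combined with recurrence of $\Phi$, a uniform positive lower bound on the base state. Extract a subsequential limit $\rho_\Phi$, verify $\L_\Phi \rho_\Phi = \lambda\rho_\Phi$, and set $\mu_\Phi := \rho_\Phi\, dm_\Phi$. Combining conformality of $m_\Phi$, the two-sided bounds on $\rho_\Phi$, and the distortion estimate yields the Gibbs inequality $K^{-1}\le \mu_\Phi(\c_n)/e^{\Phi_n(x)-n\log\lambda}\le K$, and finiteness of $\mu_\Phi$ follows from the summability conditions. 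Part (c) is then automatic, and (d) follows from ergodicity of $\mu_\Phi$ by a standard cone-contraction / uniqueness-of-positive-eigenvector argument using summable variation.

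For part (a), the uniqueness of the equilibrium state, I would use Rokhlin's entropy formula on the Markov partition: any invariant measure $\nu$ with $-\int\Phi\,d\nu<\infty$ (or $h_\nu<\infty$) has Jacobian $J_\nu$ satisfying $h_\nu(T)=\int\log J_\nu\,d\nu$, so
\[
h_\nu(T) + \int\Phi\,d\nu - \log\lambda = -\int \log\frac{J_\nu}{e^{\Phi-\log\lambda}}\,d\nu \le 0
\]
by Jensen, with equality exactly when $J_\nu = e^{\Phi - \log\lambda} = J_{\mu_\Phi}$ $\nu$-a.e.; this forces $\nu = \mu_\Phi$. Part (b) follows because $\|\L_\Phi 1\|_\infty<\infty$ lets one approximate by finite subsystems and rules out escape of pressure to infinity, giving $P_G(\Phi)=P(\Phi)$. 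Finally, the converse direction uses that a Gibbs measure forces $\mu_\Phi(X)<\infty$, which via the Gibbs bound on cylinders gives $Z_n(\Phi,X_i)\le K e^{n\log\lambda}$ and hence $P_G(\Phi)<\infty$; the BIP property is extracted from topological mixing combined with the two-sided Gibbs bound, since every cylinder must be reachable with a definite amount of Gibbs mass from finitely many base states. The main obstacle, and the place where I expect to spend the most effort, is precisely the tightness/non-escape issue in constructing $m_\Phi$: on a non-compact countable alphabet this is where BIP, finite Gurevich pressure, and summable variation must be combined delicately to avoid mass leaking to infinity in the weak-$*$ limit.
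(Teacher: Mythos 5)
This theorem is not proved in the paper; it is quoted from Sarig \cite{SaBIP}, so there is no ``paper's own proof'' to compare against. Your proposal is a reconstruction of Sarig's result, and while the overall architecture---constructing $m_\Phi$ as a weak-$*$ limit of normalized $(\L_\Phi^*)^n\delta_{x_0}$, the density $\rho_\Phi$ by Ces\`aro averaging, and uniqueness via a Rokhlin-formula/Jensen argument---is a legitimate direct route (it is closer in spirit to the Mauldin--Urba\'nski or Aaronson--Denker approach), it differs from Sarig's actual proof, which first shows that BIP together with $P_G(\Phi)<\infty$ and summable variations forces $\Phi$ to be \emph{positive recurrent}, and then cites the Generalized Ruelle--Perron--Frobenius theorem from his earlier work to produce $\lambda$, $\rho_\Phi$ and $m_\Phi$. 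Your direct construction avoids the detour through positive recurrence, but at the cost of having to prove non-escape of mass by hand, which is where the genuine difficulty resides.

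The gap is precisely at the step you flag as hardest: your tightness argument offers a uniform \emph{lower} bound $m_n(T^k\c_k)\ge\eta$ from the big-image states, but a lower bound on image masses does not prevent mass leaking to infinity; tightness of $\{m_n\}$ on a non-compact alphabet requires an \emph{upper} tail estimate, and what actually drives this is the summability
\[
\sum_{a} e^{\sup_{[a]}\Phi}<\infty,
\]
equivalently (up to a distortion constant) $\|\L_\Phi 1\|_\infty<\infty$. This is not among the hypotheses of the theorem: it has to be \emph{derived} from BIP, $P_G(\Phi)<\infty$, and $\sum_n V_n(\Phi)<\infty$, and that derivation (routing each letter $a$ through one of the finitely many BIP states to build a bounded-length periodic word whose weight is comparable to $e^{\sup_{[a]}\Phi}$, then comparing with a fixed $Z_m$) is the technical heart of Sarig's Lemma~2 in \cite{SaBIP}. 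Without it, neither the tightness of $m_n$ nor the uniform boundedness of your Ces\`aro sums $\rho_n$ follows, and the construction stalls exactly where you anticipate trouble. A telling consequence: once this summability is in hand, the hypothesis in part~(b) is automatically satisfied whenever the Gibbs measure exists---a small structural hint that the lemma your sketch omits is the load-bearing ingredient.
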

Note that because $\mu_\Phi$ is a Gibbs measure, $\mu_\Phi(\c_n) >
0$ for every cylinder set $\c_n \in \S_n$, $n \in \N$.

In the paper of Mauldin \& Urba\'nski \cite{MU} several similar
results can be found, although they use a different approach to
pressure, taking the supremum of $\Phi_n$ on cylinder sets rather
than the value of $\Phi_n$ at periodic points.

\subsection{Interval Maps}

An interval map $(I,f)$ is called {\em piecewise monotone} if
there is a finite partition $\P_1$ into maximal intervals on which
$f$ is diffeomorphic. We call this partition the {\em
branch partition}.
We will assume that $f$ is $C^2$;
negative Schwarzian derivative in this $C^2$
context means that $1/\sqrt{|Df|}$ is a convex function on each $\c \in \P_1$.

\begin{remark}
The negative Schwarzian derivative condition allows us to use the Koebe lemma
for distortion control of the branches of
the induced maps we obtain later.  However if $f\in \H$ is
$C^3$ and there are no neutral periodic cycles,
then it is unnecessary to assume negative Schwarzian
derivative.  This was proved in the unimodal setting by Kozlovski
\cite{Koz}, and later for $f\in C^{2+\eta}$
in \cite{T}.  In the multimodal setting for $f\in C^3$ this was
proved by van Strien and Vargas \cite{SVarg}. \label{rmk:Sch}
\end{remark}

Let $\P_n = \bigvee_{k=0}^{n-1} f^{-k} \P_1$.  Elements $\c_n \in
\P_n$ are called {\em $n$-cylinders}. Similarly to \eqref{eq:var},
the \emph{$n$-th variation} of a potential $\phi:I \to \R$ is
defined as
\begin{equation*}\label{eq:var2}
V_n(\phi) = \sup_{\c_n \in \P_n} \sup_{x,y \in \c_n} |\phi(x) -
\phi(y)|.
\end{equation*}

The {\em non-wandering set } $\Omega$ of $f$ is the set of points
$x$ having arbitrarily small neighbourhoods $U$ \st $f^n(U) \cap U
\neq \emptyset$ for some $n \ge 1$.  Piecewise monotone $C^2$ maps
have non-wandering sets that split into a finite or countable number of {\em
transitive} components $\Omega_k$ \st each $\Omega_k$ contains a
dense orbit, see \cite{HR} and references therein.
A transitive component is one of the following:
\begin{itemize}
\item[($\Omega$1)] A finite union of intervals, cyclically permuted by $f$.
This is the most interesting case, and
Lemma~\ref{lem:trans_subgraph}(a) in Section~\ref{sec:induce} gives its description on the
Hofbauer tower.
\\
\item[($\Omega$2)] A Cantor set if $f$ is {\em infinitely renormalisable},
i.e, there is an infinite sequence of periodic intervals $J_n$
of increasing periods,
and $\Omega = \cap_n \orb(J_n)$.
Measures on such components have
$\lambda(\mu) = 0$, see \cite{MSbook} and  \cite[Theorem D]{SVarg}
for the multimodal case.
For maps that are only piecewise $C^2$, this
is no longer true, see Section~\ref{sec:hypo}.
\\
\item[($\Omega$3)] If $f$ is (finitely) renormalisable, say it has
a periodic interval $J \neq I$, then the set of
points that avoid $\orb(J)$ contains a transitive component
as well. This is usually a Cantor set, but it could be a finite set (e.g.\ if
$f$ is the Feigenbaum map).
For infinitely renormalisable maps, there are countably many
transitive components of this type.
Lemma~\ref{lem:trans_subgraph}(b)
in Section~\ref{sec:induce} gives its description
on the Hofbauer tower.
\end{itemize}
We will
state our results for transitive interval maps, but they can be
applied equally well to $(\Omega_k, f)$ for any component
$\Omega_k$ of the non-wandering set. In all our main theorems
we assume that $(\Omega, f)$ is {\em topological mixing}
(i.e., every iterate of $f$ is
topologically transitive).
This can be achieved by taking a transitive component of an appropriate
iterate of $f$.

We say that $(X,F,\tau)$ is an {\em inducing scheme} over $(I,f)$
if
\begin{itemize}
\item $X$ is a union of intervals containing a (countable)
collection of disjoint intervals $X_i$ \st $F$ maps each $X_i$
diffeomorphically onto $X$, with bounded distortion.
\item $F|_{X_i} = f^{\tau_i}$ for some $\tau_i \in \N := \{ 1,2,3 \dots \}$.
\end{itemize}
The function $\tau:\cup_i X_i \to \N$ defined by $\tau(x) =
\tau_i$ if $x \in X_i$ is called the {\em inducing time}. It may
happen that $\tau(x)$ is the first return time of $x$ to $X$, but
that is certainly not the general case.  For ease of notation, we will often let $(X,F,\tau)=(X,F)$.

Recall that $X^\infty = \cap_n F^{-n}(\cup_i X_i)$ is the set of
points on which all iterates of $F$ are defined. We call a measure
$\mu$  \emph{compatible}\label{p:compatible} to the inducing scheme if
\begin{itemize}
\item $\mu(X)> 0$ and $\mu(X \setminus X^{\infty}) = 0$, and
\item there exists a measure $\mu_F$ which projects to $\mu$ by
\eqref{eq:lift}, and in particular $\int_X \tau \ d\mu_F <
\infty$.
\end{itemize}

\begin{remark}
\begin{itemize}
\item[(a)] If $\mu\in \M_+$, applying Theorem~\ref{thm:lyap lift} gives us an inducing scheme $(X,F)$ and a measure $\mu_F$ satisfying the above conditions.

\item[(b)] $\overline{X^\infty}=X$ implies that given a measure
$\mu_F$ obtained from Theorem~\ref{thm:BIP}, the measure $\mu$,
the projection of $\mu_F$, has $\mu(U)>0$ for any open set in
$\cup_nf^n(X)$.

\item[(c)] If $(X,F,\tau)$ comes from Theorem~\ref{thm:lyap lift},
then $\mu$ is compatible to it if and only if $\mu(X^{\infty}) >
0$; for more general inducing schemes, this equivalence is false.

\item[(d)] Note that $\int\tau~d\mu<\infty$ does not always imply that $\int\tau~d\mu_F<\infty$, see \cite{Zwei}.
\end{itemize}\label{rmk:compat}
\end{remark}

The inducing scheme $(X,F)$ will perform the role of $(X,T)$ of the
previous section, with $\S_1 = \{ X_i\}$. Since $F$ maps $X_i$
onto $X$, the BIP property is automatically satisfied
provided $F$ is transitive (if not, we can always select a
transitive component). Let us denote the collection of
$n$-cylinders of the inducing scheme by $\S_n$. A priori, $\S_n$ is
not connected to $\cup_{m \ge 0} \P_m$, i.e., the cylinder sets of
the branch partition $\P_1$. In this paper, however, we will always
take $X$ to be a subset of $\cup_k \P_k$, and in that case the
$\cup_{n \ge 1} \S_n \subset \cup_{k \ge 1} \P_k$.

Given a potential $\phi:I \to \R$, let the {\em lifted potential}
$\Phi$ be defined by $\Phi(y) = \sum_{j=0}^{\tau_i-1} \phi \circ
f^j(y)$ for $y \in X_i$. We say that $\Phi$ has \emph{summable
variations} if $\sum_{n\ge 1} V_n(\Phi)<\infty$, and that $\Phi$ is
\emph{weakly H\"older continuous} if there exist $C_\Phi>0$ and
$0<\lambda_\Phi<0$ \st $V_n(\Phi) \le C_\Phi \lambda_\Phi^n$ for all
$n\ge 1$.  Clearly if $\Phi$ is weakly H\"older continuous then
$\Phi$ has summable variations.

We use summability of variations to control distortion of
$\Phi_n(x) = \Phi(x) + \dots + \Phi \circ F^{n-1}(x)$, but for the
potential $\phi_t = -t \log|Df|$, we can also use the Koebe Lemma
provided $f$ has negative Schwarzian derivative: If $X'\supset X$
\st $X'$ is a {\em $\delta$-scaled neighbourhood} of $X$, i.e.,
both components of $X' \setminus X$ have length $\ge \delta |X|$,
and $f^k:X_i \to X$ extends diffeomorphically to $f^k:X'_i \to
X'$, then
\[
\frac{|Df^k(y)|}{|Df^k(x)|} <\frac{1+2\delta}{\delta^2}+1
\]
for all $x,y \in X_i$.

In this paper we say $A_n\asymp B_n$ if $\lim_{n\to \infty}\frac{A_n}{B_n} =1$.  We will also say that $A\asymp_{dis} B$ if $A$ is equal to $B$ up to some distortion constant.

\section{Finding Inducing Schemes}\label{sec:induce}
In this section we will prove Theorem~\ref{thm:lyap lift}. The
idea relies on the construction of the {\em canonical Markov
extension} $(\hat I, \hat f)$ of the interval map. A measure $\mu
\in \M_+$ can be lifted to $(\hat I, \hat f)$, see
\cite{Kellift}, and in this space a first return map to a specific
subset $\hat X \subset \hat I$ gives rise to the inducing scheme.

The canonical Markov extension (commonly called {\em Hofbauer
tower}), was introduced by Hofbauer and Keller, see e.g.\ \cite{Htop,
Kellift}; it is a disjoint union of subintervals $D = f^n(\c_n)$,
$\c_n \in \P_n$, called {\em domains}, where $\P_1$ is the branch
partition. Let $\D$ be the collection of all such domains. For
completeness, let $\P_0$ denote the partition of $I$ consisting of
the single set $I$, and call $D_0 = f^0(I)$ the {\em base} of the
Hofbauer tower. Then
\[
\hat I = \sqcup_{n \ge 0} \sqcup_{\c_n \in \P_n} f^n(\c_n) / \sim,
\]
where $f^n(\c_n) \sim f^m(\c_m)$ if they represent the same
interval. Let $\pi: \hat I \to I$ be the inclusion map. Points
$\hat x\in \hat I$ can be written as $(x,D)$ if $D$ is the domain
that $\hat x$ belongs to and $x = \pi(\hat x)$. The map $\hat
f:\hat I \to \hat I$ is defined as
\[
\hat f(\hat x) = \hat f(x,D) = (f(x), D')
\]
if there are cylinder sets $\c_n \supset \c_{n+1}$ \st $x \in
f^n(\c_{n+1}) \subset f^n(\c_n) = D$ and $D' = f^{n+1}(\c_{n+1})$.
In this case, we write $D \to D'$, giving $(\D, \to)$ the
structure of a directed graph. It is easy to check that there is a
one-to-one correspondence between cylinder sets $\c_n \in \P_n$
and $n$-paths $D_0 \to \dots \to D_n$ starting at the base of the
Hofbauer tower. For each $R \in \N$, let $\hat I_R$ be the compact
part of the Hofbauer tower defined by
\[
\hat I_R = \bigcup \{ D \in \D : \mbox{ there exists a path } D_0
\to \dots \to D \mbox{ of length } r \le R \}
\]
A subgraph $(\E, \to)$ is called \emph{closed} if $D \in \E$ and
$D \to D'$ implies that $D' \in \E$. It is \emph{primitive} if for
every pair $D, D' \in \E$, there is a path from $D$ to $D'$ within
$\E$. Clearly any two distinct maximal primitive subgraphs are
disjoint.

\begin{lemma}\label{lem:trans_subgraph}
Let $f:I\to I$ be a multimodal map and $\Omega$ is a transitive component.\\
(a) If $\Omega$ consists of a finite union of intervals,
then there is a closed primitive subgraph
$(\E, \to)$ of $(\D, \to)$ containing a dense $\hat f$-orbit and
\st $\Omega = \pi(\cup_{D \in \E} D)$.\\
(b)  If $\Omega$ is a Cantor (or finite) set avoiding a periodic
interval of $J$, then there is a (non-closed) primitive subgraph
$(\E, \to)$ of $(\D, \to)$
\st $\Omega \subset \pi(\cup_{D \in \E} D)$,
and there is a dense $\hat f$-orbit in
$(\cup_{D \in \E} D) \cap \pi^{-1}(\Omega)$.
\end{lemma}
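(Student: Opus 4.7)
The plan for both parts is to lift a dense $f$-orbit in $\Omega$ up to the Hofbauer tower and to identify $\E$ with the collection of domains in which this lifted orbit either stays (in (a)) or recurs infinitely often (in (b)). Fix $x^\ast \in \Omega$ whose forward $f$-orbit is dense in $\Omega$, set $\hat x^\ast := (x^\ast, D_0) \in \hat I$, and write $\hat f^n(\hat x^\ast) \in D_n$ where $D_n = f^n(\c_n(x^\ast))$ for the $n$-cylinder $\c_n(x^\ast) \ni x^\ast$.

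\emph{Part (a).} Using that the cylinders $\c_n(x^\ast)$ shrink to $\{x^\ast\}$ together with transitivity of $f|_\Omega$, I would first find $N$ such that $D^\ast := f^N(\c_N(x^\ast))$ equals an entire component of $\Omega$; such an $N$ exists because under iteration a small enough cylinder containing $x^\ast$ is spread across a full component. Let $\E$ be the forward $\to$-closure of $D^\ast$ in $\D$, which is closed by construction. Since the successor of any $D$ with $\pi(D)\subseteq \Omega$ has its projection contained in $f(\pi(D))\subseteq\Omega$, this gives $\pi(\cup_{D\in\E}D)\subseteq\Omega$. The cyclic permutation of the components of $\Omega$ by $f$ yields a loop $D^\ast\to D_1^\ast\to\cdots\to D_{k-1}^\ast\to D^\ast$ hitting each component; concatenating a forward path from $D^\ast$ to an arbitrary $D\in\E$ with a suitable segment of this loop produces a return path from $D$ back to $D^\ast$, so $\E$ is primitive. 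Finally the $\hat f$-orbit of $\hat x^\ast$ remains in $\cup_{D\in\E}D$ from time $N$ onward, its projection is dense in $\Omega$, giving both $\Omega = \pi(\cup_{D\in\E}D)$ and (via shrinking of cylinders inside each component) density of the $\hat f$-orbit in $\cup_{D\in\E}D$.

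\emph{Part (b).} Here $\Omega$ has empty interior and no domain $D$ can satisfy $\pi(D)\subseteq\Omega$: every open interval meeting $\Omega$ also meets $\orb(J)$, so the forward $\to$-closure of any domain spills out of $\pi^{-1}(\Omega)$, precluding a closed subgraph. Instead I would set $\E := \{D\in\D : D_n = D \text{ for infinitely many } n\}$, the $\omega$-limit of the lifted orbit at the domain level. Density of the $\hat f$-orbit in $(\cup_{D\in\E}D)\cap\pi^{-1}(\Omega)$ and the inclusion $\Omega\subseteq\pi(\cup_{D\in\E}D)$ follow from density of the projected orbit in $\Omega$ plus shrinking of cylinders. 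For primitivity, given $D,D'\in\E$, I would fix a `hub' domain $D^{\ast\ast}\in\E$ visited infinitely often, and concatenate long orbit segments $D\to\cdots\to D^{\ast\ast}\to\cdots\to D'$, arguing via many returns that the intermediate steps can be arranged to lie in $\E$.

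The main obstacle is the primitivity verification. In (a) it rests on the claim that non-recurrent forward-reachable domains do not obstruct the existence of return paths, which is handled using the cyclic loop over the components of $\Omega$; the preliminary step of locating a domain $D^\ast$ projecting onto a whole component of $\Omega$ is the technical heart of this part. In (b) the lifted orbit freely passes through domains visited only finitely often, and extracting a path lying entirely in $\E$ requires a careful concatenation argument, exploiting both the transitivity of $f|_\Omega$ and the abundance of returns to recurrent domains.
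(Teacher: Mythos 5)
Your overall strategy—lift a dense orbit and read off the subgraph from the domains it visits—is in the same spirit as the paper's proof, but the argument as sketched has serious gaps precisely at the point where the paper has to work hardest.

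The first and most important gap is the unsubstantiated claim in (a) that some $D^* = f^N(\c_N(x^*))$ equals an entire component of $\Omega$. That a small cylinder $\c_N(x^*)$ is eventually spread across a component under \emph{some} iterate $f^m$ follows from transitivity, but you need the specific iterate $m=N$ (the depth of the cylinder) to get a Hofbauer domain, and there is no a priori reason that $f^N(\c_N(x^*))$ is large: its endpoints track critical orbits and can well shrink as $N\to\infty$. In fact the crux of the paper's proof is precisely to rule out the possibility that the lifted orbit $(\hat f^n(\hat x^*))_n$ escapes every compact part $\hat I_R$ of the tower (equivalently, that the levels/positions of the visited domains run off to infinity). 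This is done by a nontrivial contradiction argument using the precritical points $\zeta_n$ accumulating on a periodic point $p$, the nested sets $\hat\Omega_R$, and a combinatorial Claim about cylinders sharing a common image. Your proposal silently assumes the conclusion of that argument.

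The second gap is the cyclic loop $D^*\to D_1^*\to\cdots\to D^*$. Even if $D^*$ projects onto a full component $\Omega_1$, the domain $D^*$ generally contains a critical point, so its $\hat f$-image is a \emph{union} of several domains, each projecting to only part of the next component $\Omega_2$; there is no single successor $D_1^*$ projecting onto all of $\Omega_2$. Consequently the proposed return path from an arbitrary $D\in\E$ back to $D^*$ via "a suitable segment of this loop" is not available. Relatedly, taking $\E$ to be the full forward $\to$-closure of $D^*$ risks including dead-end domains from which no path returns; the correct $\E$ (as in the paper) is the set of domains eventually (or recurrently) visited by the lifted orbit, and primitivity of that set is what requires proof.

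Finally, in (b) you define $\E$ as the set of domains visited infinitely often by the lifted orbit, but you never show $\E\neq\emptyset$—that is, again, exactly the escape-to-infinity issue. The paper instead identifies an absorbing subgraph $\E_{\mbox{\tiny absorb}}$ corresponding to $\orb(J)$ and applies part (a) to the complementary graph $\D\setminus\E_{\mbox{\tiny absorb}}$, inheriting the non-escape argument rather than re-proving it. If you want to follow your more direct route for (b), you would still need the contradiction argument from (a) to guarantee recurrence.
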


The arguments for this lemma are implicit in \cite{Htop,HR}
combined. We will give a self-contained proof in the appendix.
Notice that $(\hat I, \hat f)$ is a Markov map in the sense that
the image of any domain $D$ is the union of domains of $\hat I$.
Obviously, $\pi \circ \hat f = f \circ \pi$.

Recall that $D_0 = I = f^0(\c_0)$ is the base of the Hofbauer
tower. Let $i:I \to D_0$ be the trivial bijection map (inclusion)
\st $i^{-1} = \pi|_{D_0}$. Given a measure $\mu \in \M_{erg}$, let
$\hat \mu_0 = \mu \circ i^{-1}$, and
\begin{equation}
\hat{\mu}_n := \frac{1}{n}\sum_{k=0}^{n-1} \hat \mu_0
\circ\hat{f}^{-k}. \label{eqn:mulift}
\end{equation}
We say that $\mu$ is {\em liftable} to $(\hat I, \hat f)$ if there
exists a weak accumulation point $\hat \mu$ of the sequence
$\{\hat\mu_n\}_n$ with $\hat\mu\not\equiv 0$.

\begin{remark}\label{rmk:lifted_ergodic}
If $\mu$ is liftable and ergodic, then $\hat\mu$ is an ergodic
$\hat f$-invariant probability measure on $\hat I$, see \cite{Kellift}
\end{remark}

\begin{proof}[Proof of Theorem~\ref{thm:lyap lift}]
First assume that $\mu \in \M_+$. Keller \cite{Kellift} showed
that if $\mu$ is not atomic then it is liftable, $\hat \mu(\hat I) = \mu(I) = 1$ and $\hat\mu\circ\pi^{-1} =\mu$.  If $\mu\in \M_+$ is atomic, it must be supported on a hyperbolic repelling periodic cycle.  It is easy to show that such measures are liftable.  In both cases, \cite{Kellift} shows that $\hat\mu$ is also ergodic.

Now take some domain $D$ and
cylinder set $\c_n \in \P_n$ \st $\pi(D)$ compactly contains
$\c_n$ and $\hat \mu(\hat X) > 0$ for $\hat X := \pi^{-1}(\c_n)
\cap D$. Let $\hat F:\hat X \to \hat X$ be the first return map;
let $\hat\tau(x) \in \N$ be \st $\hat F(x) = \hat f^{\hat\tau(\hat
x)}(\hat x)$ for each $\hat x \in \hat X$ on which $\hat F$ is
defined. By the Markov property of $\hat f$, $\hat x$ has a
neighbourhood $U$ \st $\hat f^{\hat\tau(\hat x)}$ maps $U$
monotonically onto $D$. Therefore there is a neighbourhood $V
\subset U$ \st $\hat f^{\hat\tau(\hat x)}$ maps $V$ monotonically
onto $\hat X$. Since $\pi(\hat X) = \c_n$ is a cylinder set,
$\orb(\partial \hat X) \cap \hat X = \emptyset$. It follows that
$\hat\tau(\hat y) = \hat\tau(\hat x)$ for all $\hat y \in V$.

Let $\Omega$ be the transitive component supporting $\mu$.
If $\Omega$
is an interval as in case ($\Omega$1), then we take $D$ inside the closed transitive subgraph
of $(\D, \to)$ as guaranteed by Lemma~\ref{lem:trans_subgraph}(a).
Take any open interval $U \subset X$. Since $\P_1$ generates the
Borel $\sigma$-algebra there is an $n$-cylinder $\c_n \subset
U$; we let $\hat \c_n = \pi^{-1}(\c_n) \cap D$. It follows that
$\hat f^n(\hat \c_n) = D'$ for some domain $D'$ in the same
transitive component of the Hofbauer tower as $D$. Hence there is
an $m$-path $D' \to \dots \to D$ and a subcylinder $\hat \c_{n+m}
\subset \hat \c_n$ \st $\hat f^{n+m}(\hat \c_{n+m}) = D$.
Therefore $\pi(\hat \c_{n+m}) \subset U$ contains a domain $X_i$.
It follows that $\cup_i X_i$ is dense in $X$. Repeating the
argument for $U \subset X_i$ we find that $F^{-1}(\cup_i X_i)$ is
dense in $X$, and by induction, $X^{\infty}$ is dense in $X$ as
well. (Notice that this construction may produce many branches $X_i$
such that $\mu(X_i) = 0$, but this doesn't affect the result.)

If $\Omega$ is as in case ($\Omega$2) then $\M_+ =\es$ so there is nothing to show.  This is proved for the unimodal case in \cite{MSbook}; the multimodal case is similar, the required `real bounds' follow from \cite{SVarg}.
If $\Omega$ is Cantor (or finite) set of points avoiding a periodic
interval of $f$ as in case ($\Omega$3), then Lemma~\ref{lem:trans_subgraph}(b)
still provides us with a primitive subgraph, and the same
argument as above shows that $X^\infty$ is dense in $X \cap \Omega$.

Now the inducing scheme $(X, F, \tau)$ is defined by $X = \pi(\hat
X)$, $F = \pi\circ\hat F \circ \pi^{-1}|_{\hat X}$ and $\tau(x) = \hat
\tau(\pi^{-1}(x) \cap \hat X)$. Because $\mu = \hat\mu \circ
\pi^{-1}$, $\mu(X) \ge \hat \mu(\hat X) > 0$.

Let $\hat\mu_{\hat X}:=\frac1{\hat \mu(\hat X)}\hat\mu|_{\hat X}$  be
the conditional measure on $\hat X$.
The measure $\mu_F := \hat \mu_{\hat X} \circ  \pi^{-1}|_{\hat X}$
is clearly $F$-invariant, and by Kac's Lemma,
\[
\int_X \tau \ d\mu_F = \int_{\hat X} \hat \tau \ d\hat\mu_{\hat X}
= \frac{1}{\hat\mu (\hat X)} < \infty.
\]
Finally, by the Poincar\'e Recurrence Theorem, $\hat\mu_{\hat
X}$-a.e. point $\hat x \in \hat X$ returns infinitely often to
$\hat X$, and because $\mu_F \ll \mu $ we also get
$\mu(X^{\infty}) = \mu(X)$ by ergodicity.

Now for the other direction, notice that by assumption, each
branch of any iterate $F^n$ of the induced map has negative
Schwarzian derivative. Therefore distortion is bounded uniformly
over $n$ and the branches of $F^n$. Hence, by taking an iterate of
the induced map $F$ if necessary, we can assume that $F^n$ is
uniformly expanding. It follows by $F$-invariance of $\mu_F$ that
\begin{align*}
0 &< \frac1n \int_{X^{\infty}} \log |DF^n| \ d\mu_F \\
&= \int_{X^{\infty}} \frac1n \sum_{j=0}^{n-1} \log |DF \circ F^j|
\ d\mu_F = \int_{X^{\infty}} \log |DF| \ d\mu_F = \lambda(\mu_F).
\end{align*}
Let $\mu$ be the projected measure of $\mu_F$; both $\mu_F$ and
$\mu$ are ergodic. Since $\int \tau \ d\mu_F < \infty$, we can
take a point $x \in X^{\infty}$ which is typical for both $\mu_F$
and $\mu$. Let $\tau_k = \sum_{j=0}^{k-1} \tau \circ F^j(x)$. Then
applying the Ergodic Theorem several times, we get $\lim_{k \to
\infty} \frac{\tau_k}{k} = \int \tau d \mu_F < \infty$ and
\begin{align*}
\lambda(\mu) &= \int_I \log |Df| \ d\mu
= \lim_{n \to \infty} \frac1n \sum_{j=0}^{n-1} \log|Df \circ f^j(x)| \\
&=  \lim_{k \to \infty} \frac1{\tau_k}
\sum_{j=0}^{\tau_k-1} \log|Df \circ f^j(x)| \\
&=  \lim_{k \to \infty} \frac{k}{\tau_k} \ \frac1k
\sum_{j=0}^{k-1} \log|DF \circ F^j(x)| = \frac1{\int \tau d\mu_F}
\lambda(\mu_F) > 0.
\end{align*}
This concludes the proof.
\end{proof}

\begin{remark}
If $\lambda(\mu) > 0$ but $\supp(\mu) \subset \orb(\Crit)$ and
$\mu$ is the equidistribution on a repelling periodic orbit, say
$\supp(\mu) = \orb(p)$ where $f^n(p) = p$, then we can still find an inducing scheme compatible to $\mu$. Let $X \owns p$ be an
open interval \st the component of $f^{-n}(X)$ containing $p$ is
compactly contained in $X$. Call this component $X_1$. Then
$(X,F,\tau)$ with $F|_{X_1} = f^{\tau_1}|_{X_1} = f^n|_{X_1}$ is
an inducing scheme compatible to $\mu$.
\end{remark}

\begin{remark}
If $\mu\in \M_+$ then
Remark~\ref{rmk:lifted_ergodic} implies that $\hat\mu$ is ergodic.
If $\Omega$ is as in Lemma~\ref{lem:trans_subgraph}(a) we also have that $\hat\mu$ is supported on $\E$.  That lemma implies that for any $\hat x\in \hat I\sm\bd\D $ there is $\hat y\in \E$ so that $\pi(\hat x)=\pi(\hat y)$.
Thus there exists $n \ge 0$ so that $\hat f^n(\hat x)=\hat f^n(\hat y)$.   So $\hat\mu(\E)=1$ follows by ergodicity.  \label{rmk:E full meas}
\end{remark}

The induced system used in this proof may be the simplest but not
always the most convenient. Let us call an inducing scheme $(X, F,
\tau)$ a {\em first extendible return} scheme with respect to a
neighbourhood $Y$ of $X$ if for each $x \in X_i$, $\tau(x)$ is the
smallest positive iterate \st $f^j(x) \in X$ and there is a
neighbourhood $Y_i \supset X_i$ \st $f^j$ maps $Y_i$ monotonically
onto $Y$. If $Y$ is a fixed $\delta$-scaled neighbourhood $Y$,
then the Koebe Lemma can be used to control distortion of branches
of (iterates of) $F$.  In this case we say that $\tau$ is the \emph{first $\delta$-extendible return time to $X$}.

\begin{lemma}\label{lem:exten}
If $\mu\in \M_+$ then there exists $\delta>0$ and an interval $X\subset I$ such that $\mu$ is compatible to the inducing scheme $(X,F,\tau)$ where $\tau$ is the first $\delta$-extendible return time.  Moreover, if $\Omega$ is the transitive component supporting $\mu$ then $\overline{X^\infty} = X\cap \Omega$.
\end{lemma}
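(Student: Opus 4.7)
The plan is to bootstrap from the inducing scheme supplied by Theorem~\ref{thm:lyap lift} to one whose return time is a first $\delta$-extendible return, and then build a compatible invariant measure by a Kakutani skyscraper. Applying Theorem~\ref{thm:lyap lift} to $\mu$ yields a scheme $(X,F',\tau')$ together with an $F'$-invariant probability measure $\mu_{F'}$ with $\int_X \tau'\,d\mu_{F'}<\infty$ and projecting to $\mu$ via \eqref{eq:lift}. In that construction $X$ is a cylinder set $\c_n\in\P_n$ compactly contained in $\pi(D)$ for a suitable domain $D$ of the Hofbauer tower, and each first-return branch $\hat f^{\tau'_i}$ on the tower maps monotonically onto $D$. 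Set $Y:=\pi(D)$ and fix $\delta>0$ small enough that $Y$ is a $\delta$-scaled neighborhood of $X$. Projecting the tower extensions via $\pi$, each $f^{\tau'_i}$ extends monotonically from a neighborhood of $X'_i$ onto $Y$, so every $\tau'_i$ is already a $\delta$-extendible return.

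Now let $\tau$ be the first $\delta$-extendible return time to $X$ (with extension to $Y$); then $\tau\le\tau'$ wherever $\tau'$ is defined. The branches $\{X_j\}$ of $F:=f^\tau$ are the maximal intervals on which a monotone extension $f^{\tau_j}\colon Y_j\to Y$ exists, and each $F|_{X_j}\colon X_j\to X$ is a diffeomorphism with uniformly bounded distortion by the Koebe Lemma (via the negative Schwarzian hypothesis). Each $F'$-iterate is a composition of finitely many $F$-iterates, so we may write $F'(x)=F^{k(x)}(x)$ with $k(x)\in\N$; the bound $k(x)\le\tau'(x)$ gives $\int_X k\,d\mu_{F'}<\infty$. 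Define
\[
\mu_F(A):=\frac{1}{\int_X k\,d\mu_{F'}}\int_X\sum_{j=0}^{k(x)-1}\mathbf{1}_A(F^j(x))\,d\mu_{F'}(x).
\]
One verifies $\mu_F(X)=1$, and a short telescoping computation using $F'$-invariance of $\mu_{F'}$ gives $\mu_F\circ F^{-1}=\mu_F$; moreover $\int_X \tau\,d\mu_F=\int_X\tau'\,d\mu_{F'}/\int_X k\,d\mu_{F'}<\infty$.

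For the projection formula \eqref{eq:lift}, insert $\mu_F$ into its right-hand side and use the partition identity
\[
\sum_{j=0}^{k(x)-1}\sum_{i=0}^{\tau(F^j(x))-1}\mathbf{1}_A\bigl(f^i(F^j(x))\bigr)=\sum_{m=0}^{\tau'(x)-1}\mathbf{1}_A(f^m(x)),
\]
which holds because the successive $F$-iterates of $x$ partition the orbit segment $x,f(x),\dots,f^{\tau'(x)-1}(x)$ into disjoint blocks. Applying compatibility of $\mu_{F'}$ to the resulting single sum and cancelling constants yields $\mu(A)$. Finally, the density $\overline{X^\infty}=X\cap\Omega$ carries over from the analogous statement in Theorem~\ref{thm:lyap lift}: one notes $X^\infty_{F'}\subset X^\infty_F$, since between two successive $F'$-returns the intermediate $F$-returns inherit $\delta$-extendibility via the monotone pullback of $Y$ inside itself. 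The principal obstacle is the combinatorial bookkeeping in the skyscraper construction, namely showing that $\mu_F$ is $F$-invariant and satisfies \eqref{eq:lift}; everything else is a straightforward assembly of Theorem~\ref{thm:lyap lift}, the Koebe Lemma, and Poincar\'e recurrence.
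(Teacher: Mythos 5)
Your proof takes a genuinely different route from the paper's. The paper chooses the lifted inducing domain $\hat X$ to be the disjoint union $\sqcup\{D\cap\pi^{-1}(X): D\in\D,\ \pi(D)\supset Y\}$ over \emph{all} tower domains $D$ whose projection contains the $\delta$-scaled neighbourhood $Y$, and then quotes the result of \cite{BrCMP} that the first return time to this enlarged $\hat X$ on the Hofbauer tower equals the first $\delta$-extendible return time in the base; compatibility then comes for free from liftability plus Kac's lemma, exactly as in Theorem~\ref{thm:lyap lift}. You instead keep the single-domain set $\hat X^{(0)}=\pi^{-1}(\c_n)\cap D$ from Theorem~\ref{thm:lyap lift}, observe that its first-return branches already happen to be $\delta$-extendible for a suitable $\delta$, and then treat $(X,F',\tau')$ as a Kakutani tower built over the finer scheme $(X,F,\tau)$. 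Your approach buys independence from the particular identification in \cite{BrCMP}, at the cost of having to verify the skyscraper bookkeeping ($F$-invariance of $\mu_F$, the partition identity, and the projection formula), which you do correctly. The paper's approach is shorter because it folds all of that into the standard Kac argument applied once to the enlarged set.

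The one real gap is the bare assertion ``Each $F'$-iterate is a composition of finitely many $F$-iterates, so we may write $F'(x)=F^{k(x)}(x)$.'' This is what the entire skyscraper construction and the partition identity $\sum_{j<k(x)}\sum_{i<\tau(F^j(x))}\mathbf{1}_A(f^i(F^j(x)))=\sum_{m<\tau'(x)}\mathbf{1}_A(f^m(x))$ rest on, and it is not immediate from the definitions: a priori $\tau'(x)$ is just \emph{some} $\delta$-extendible return time, and you need to show it lies in the cumulative sequence $\tau(x),\ \tau(x)+\tau(F(x)),\ \dots$. The missing lemma is: if $m>\tau(x)$ is a $\delta$-extendible return time of $x$ (witnessed by a monotone extension $f^m\colon Z\to Y$), then $m-\tau(x)$ is a $\delta$-extendible return time of $F(x)$, witnessed by the monotone map $f^{m-\tau(x)}\colon f^{\tau(x)}(Z)\to Y$ obtained by restricting the branch of $f^m$. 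Iterating this reduces $\tau'(x)$ step by step to zero in finitely many $F$-steps, giving $F'(x)=F^{k(x)}(x)$. You should state and prove this; as written it is exactly the content of the \cite{BrCMP} identification which the paper cites, and asserting it without proof makes your argument no more self-contained than the paper's while still leaving the hardest step unjustified.

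Two smaller points worth tightening: (i) the claim $\overline{X^\infty}=X\cap\Omega$ needs both inclusions, and your argument ($X^\infty_{F'}\subset X^\infty_F$ plus density of $X^\infty_{F'}$) only gives $\overline{X^\infty_F}\supset X\cap\Omega$; you should also note $X^\infty_F\subset\Omega$, which follows from the transitive-subgraph construction as in Theorem~\ref{thm:lyap lift}. (ii) ``$\mu_F(X)=1$'' is true but should be computed: $\mu_F(X)=\int_X k\,d\mu_{F'}/\int_X k\,d\mu_{F'}=1$ since $F^j(x)\in X$ for all $0\le j<k(x)$.
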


The proof of the first part of this lemma can be found in \cite{BrCMP}, but some of the ideas of the proof are particularly useful in this paper so we sketch those parts here.

\begin{proof} As we noted in the proof of Theorem~\ref{thm:lyap lift}, since $\mu\in \M_+$, $\hat\mu(\hat I)>0$.  We choose $X$ and $\delta>0$ so that the set
$\hat X = \sqcup \{ D \cap \pi^{-1}(X) : D \in \D, \pi(D) \supset
Y \}$, where $Y$ is concentric with $X$ and size $(1+2\delta)|X|$, has $\hat\mu(\hat X)>0$.  Let $r_{\hat X}$ denote the first return map to $\hat X$.  In \cite{BrCMP} it is shown that given $x\in X^\infty$, for any $\hat x\in \hat X$ with $\pi(\hat x)=x$, we have $r_{\hat X}(\hat x)=\tau(x)$.  As in \cite{BrCMP}, this can be used to prove that $\mu$ is compatible to $(X,F, \tau)$.

The proof that $\overline{X^\infty} = X\cap \Omega$ follows as in the proof of Theorem~\ref{thm:lyap lift}.
\end{proof}

Theorem~\ref{thm:lyap lift} exploits the fact that measures with
positive Lyapunov exponents are liftable; but their lifts do not,
in general, give similar mass to the same parts in the Hofbauer
tower. The next result shows that measures with entropy uniformly
bounded away from $0$ lift, and give mass uniformly to specific
compact subsets of the Hofbauer tower. The proof is postponed to
the appendix.

\begin{lemma}\label{lem:compat entro}
For every $\eps > 0$, there are $R \in \N$ and $\eta > 0$ such
that if $\mu \in \M_{erg}$ has entropy $h_\mu(f) \ge \eps$, then
$\mu$ is liftable to the Hofbauer tower and $\hat \mu(\hat I_R)
\ge \eta$. Furthermore, there is a set $\hat E$, depending only on
$\eps$, \st $\hat \mu(\hat E)> \eta/2$ and $\min_{D\in \D\cap\hat
I_R}d(\hat E\cap D, \bd D)>0$.
\end{lemma}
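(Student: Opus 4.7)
\textbf{Proof plan for Lemma~\ref{lem:compat entro}.}
\emph{Liftability.} By Ruelle's inequality $\lambda(\mu) \ge h_\mu(f) \ge \eps > 0$, so Keller's lifting theorem \cite{Kellift} (already invoked in the proof of Theorem~\ref{thm:lyap lift}) produces an ergodic $\hat f$-invariant probability measure $\hat\mu$ on $\hat I$ projecting to $\mu$. This disposes of liftability with no need for uniformity.

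\emph{Uniform lower bound $\hat\mu(\hat I_R) \ge \eta$.} The key is the finite combinatorial complexity at depth $>R$ of the Hofbauer tower for multimodal $C^2$ maps \cite{Htop}: there is a sequence $\eps_R \searrow 0$ such that the number of $n$-paths $D_0 \to \dots \to D_n$ in $(\D,\to)$ whose terminal segment lies outside $\hat I_R$ grows at most like $e^{n\eps_R}$, independently of the branching within $\hat I_R$. Since $n$-cylinders $\c_n \in \P_n$ correspond bijectively to $n$-paths starting at $D_0$, combining this with the Shannon--McMillan--Breiman theorem for $\mu$ and the Birkhoff ergodic theorem for $1_{\hat I_R}$ yields an inequality of the form
\[
h_\mu(f) \ \le\ \eps_R \,+\, \bigl(1-\hat\mu(\hat I_R)\bigr)\, \log \#\P_1 .
\]
Choosing $R$ so large that $\eps_R < \eps/2$ forces $\hat\mu(\hat I_R) \ge \eta := \eps/(2 \log \#\P_1)$, uniformly over every $\mu$ with $h_\mu(f) \ge \eps$.

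\emph{Construction of $\hat E$.} Since $\hat I_R$ is a finite union of intervals indexed by $\D \cap \hat I_R$, the set $B_R$ of all their endpoints is finite, and $\pi(B_R) \subset \orb(\Crit) \cup \bd I$. Set
\[
\hat E_\delta \ := \ \bigsqcup_{D \in \D \cap \hat I_R} \{ \hat x \in D \,:\, d(\hat x, \bd D) \ge \delta \} .
\]
The goal is to pick $\delta = \delta(\eps) > 0$ so that $\hat\mu(\hat E_\delta) > \eta/2$ for every admissible $\mu$. I would argue by contradiction: otherwise there would exist $\delta_k \downarrow 0$ and measures $\mu_k$ with $h_{\mu_k}(f) \ge \eps$ satisfying $\hat\mu_k(\hat I_R \setminus \hat E_{\delta_k}) \ge \eta/2$. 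A weak-$*$ accumulation point $\hat\nu$ of $\{\hat\mu_k\}$ would then concentrate mass $\ge \eta/2$ on the finite set $B_R$. Decomposing $\hat\nu$ into ergodic components, the atomic part supported on periodic cycles in $\pi(B_R)$ contributes $0$ entropy, so $h_\nu \le (1-\eta/2)\, h_{top}(f)$. On the other hand upper semi-continuity of $\mu \mapsto h_\mu(f)$ for multimodal $C^2$ maps forces $h_\nu \ge \eps$ in the limit. After shrinking the $\eta$ from step~2 further if necessary so that $(1-\eta/2)\, h_{top}(f) < \eps$, we obtain a contradiction and conclude.

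\emph{Main obstacle.} The delicate part is the third step. First, the atomic-versus-non-atomic dichotomy must be set up so that the constant $\eta$ delivered by step~2 is compatible with the contradiction sought in step~3 --- if necessary one recalibrates the ratio of $\eps$ to $\eta$ at the end of step~2. Second, since the Hofbauer tower is not compact, the weak-$*$ accumulation argument must be localised to a compact piece: this is why the uniform mass bound on $\hat I_R$ from step~2 is essential before one can safely pass to the limit in step~3.
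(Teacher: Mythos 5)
Your steps~1 and~2 follow the paper's route (Keller for liftability; a counting argument for excursions outside $\hat I_R$ combined with an entropy bound), but step~3 departs from the paper and, as written, does not work.

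The paper constructs $\hat E$ by a direct dynamical escape-time argument: for each $D\subset\hat I_R$ it chooses $\kappa_D>0$ so that any $\hat x\in D$ with $d(\hat x,\bd D)<\kappa_D$ has $\hat f^k(\hat x)\notin\hat I_R$ for all $R<k\le 3R/\eta$. By Birkhoff (ergodicity), each visit to $\hat I_R\sm\hat E$ forces a long block of time outside $\hat I_R$, while the total frequency outside $\hat I_R$ is at most $1$; this yields $\hat\mu(\hat I_R\sm\hat E)(3/\eta-1)\le 1$ and hence $\hat\mu(\hat E)>\eta/2$ directly, with no compactness argument and no need to compare $\eta$ with $\eps/\htop(f)$.

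Your compactness-and-contradiction version has a gap that is not repairable by the recalibration you suggest. You end by demanding $(1-\eta/2)\htop(f)<\eps$. Since $\eta\le 1$, the left side is at least $\htop(f)/2$, so the inequality fails whenever $\eps<\htop(f)/2$ — i.e.\ precisely the regime the lemma is meant to cover. Moreover you propose to achieve it by \emph{shrinking} $\eta$, which moves $(1-\eta/2)\htop(f)$ in the wrong direction; you would need to \emph{increase} $\eta$, but $\eta$ is bounded by the output of step~2 and cannot be pushed toward $2(1-\eps/\htop(f))>1$. Separately, the weak-$*$ limit $\hat\nu$ on the non-compact tower may lose mass and fail to be an invariant probability measure (you flag this but do not resolve it), and $\pi(B_R)$ — a finite set of critical-value iterates — need not contain a periodic orbit, so it is not clear that an invariant $\hat\nu$ can place atomic mass there at all. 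The paper's escape-time construction of $\hat E$ avoids all of these difficulties; I would replace your step~3 with that argument.
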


One consequence of this lemma is that the choice of $\delta$ in Lemma~\ref{lem:exten} depends only on the entropy of $\mu$.

Notice that by Remark~\ref{rmk:E full meas}, we can suppose that $\hat E\subset \E$.
We will use this lemma in connection with Case 4 of
Proposition~\ref{prop:all_indu} in the next section to carry out
the proofs of Theorems~\ref{thm:CE} and \ref{thm:poly}. In
principle, these results deal with measures in $\M_+$ that
possibly have zero entropy. However, the next lemma shows that our
equilibrium states need to have both positive Lyapunov exponent
and entropy.

\begin{lemma}\label{lem:free energy bound}
Suppose that $f\in \H$ satisfies \eqref{eq:polynomial}.
Then there exists $\zeta_1<0$ so that for $t\in (\zeta_1, 1)$, there exist $\eps_0, \, \eps>0$ so that any
measure $\nu$ with $h_\nu(f) + \int \phi_t~d\nu > P(\phi_t) -\eps_0$
satisfies $h_{\nu}(f)\ge \eps$.  Similarly, if
 $f\in \H$ satisfies \eqref{eq:CE} then there exist $\zeta_1<0<\zeta_2$ so that for $t\in (\zeta_1, 1+\zeta_2)$, there exist $\eps_0, \, \eps>0$ so that any
measure $\nu$ with $h_\nu(f) + \int \phi_t~d\nu > P(\phi_t) -\eps_0$
satisfies $h_{\nu}(f)\ge \eps$.
\end{lemma}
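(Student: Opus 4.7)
The hypothesis reads $h_\nu(f) - t\lambda(\nu) > P(\phi_t) - \eps_0$, and the universal inputs I plan to exploit are Ruelle's inequality $h_\nu(f) \le \lambda(\nu)$ (valid because $\lambda(\nu) \ge 0$ by \cite{Prz}), the crude bound $\lambda(\nu) \le L := \log\|Df\|_\infty$, and continuity of $t \mapsto P(\phi_t)$ (a convex, hence continuous, finite function of $t$). Under \eqref{eq:polynomial} the acip $\mu_1$ from \cite{BRSS} satisfies $h_{\mu_1}(f) = \lambda(\mu_1) > 0$, so \eqref{eq:pressure} gives $P(\phi_t) \ge (1-t)\lambda(\mu_1)$ and in particular $P(\phi_0) \ge \lambda(\mu_1) > 0$.

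My plan for the polynomial case is to split at $t=0$. For $t \in [0,1)$, multiplying Ruelle's inequality by $t \ge 0$ gives $h_\nu(f) - t\lambda(\nu) \le (1-t)h_\nu(f)$; combined with the hypothesis and \eqref{eq:pressure} this yields $(1-t)h_\nu(f) > (1-t)\lambda(\mu_1) - \eps_0$, so taking $\eps_0 := (1-t)\lambda(\mu_1)/2$ delivers $\eps := \lambda(\mu_1)/2$. For $t \in (\zeta_1, 0)$ the sign in Ruelle flips and is of no direct use, so instead I would bound $-t\lambda(\nu) \le |t|L$ to obtain $h_\nu(f) > P(\phi_t) - \eps_0 - |t|L$; by continuity $P(\phi_t) - |t|L \to P(\phi_0) \ge \lambda(\mu_1)$ as $t \to 0^-$, so choosing $\zeta_1 < 0$ sufficiently close to $0$ forces $P(\phi_t) - |t|L \ge \lambda(\mu_1)/2$ throughout $(\zeta_1, 0)$, and then $\eps_0 = \eps = \lambda(\mu_1)/4$ works.

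The CE case inherits the same argument on $(\zeta_1, 1)$ because \eqref{eq:CE} implies \eqref{eq:polynomial}. The genuinely new range is $t \in [1, 1+\zeta_2)$, where the previous trick would produce only an upper bound on $h_\nu(f)$. There I would use two extra ingredients: first, $P(\phi_1) = 0$ (from \eqref{eq:pressure} at $t=1$ together with Ruelle, which forces $P(\phi_1) \le 0$); and second, a uniform bound $\lambda_{\min} := \inf_{\nu \in \M_{erg}}\lambda(\nu) > 0$ supplied by the CE hypothesis via \cite{NSa,BS}. Rewriting the hypothesis as $h_\nu(f) > t\lambda(\nu) + P(\phi_t) - \eps_0 \ge t\lambda_{\min} + P(\phi_t) - \eps_0$, continuity of $P$ near $t=1$ lets me pick $\zeta_2 > 0$ with $t\lambda_{\min} + P(\phi_t) \ge \lambda_{\min}/2$ throughout $[1, 1+\zeta_2)$, and then $\eps_0 = \eps = \lambda_{\min}/4$ completes the case.

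The main obstacle I anticipate is securing the uniform lower bound $\lambda_{\min} > 0$ under CE: the cited papers \cite{NSa,BS} are invoked in this excerpt only for pointwise positivity $\lambda(\nu) > 0$, whereas my handling of $t \ge 1$ genuinely needs uniformity. I would extract $\lambda_{\min}$ from the uniform hyperbolicity of repelling periodic orbits that the CE condition enforces, combined with Katok-style approximation of an arbitrary ergodic measure by such orbits. Once that bound is in hand, the remaining bookkeeping (absorbing the pointwise-in-$t$ quantifier on $\eps_0, \eps$ and choosing a single $\zeta_1$ that works for both sub-ranges of the polynomial case) is routine.
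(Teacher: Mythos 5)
Your argument is correct and runs on the same rails as the paper's own proof: positivity of $P(\phi_t)$ for $t<1$ from the acip and \eqref{eq:pressure}, a crude bound on $|\int\phi_t\,d\nu|$ for $t<0$, and the uniform lower bound $\underline\lambda := \inf_{\nu\in\M_{erg}}\lambda(\nu) > 0$ in the CE case. The cosmetic differences are minor. For $t\in[0,1)$ the paper takes $\eps_0 = P(\phi_t)/2$ and simply drops the nonpositive term $\int\phi_t\,d\nu$, getting $\eps = P(\phi_t)/2$ directly without invoking Ruelle; your version through Ruelle gives the $t$-uniform $\eps = \lambda(\mu_1)/2$ but at the cost of a $t$-dependent $\eps_0 = (1-t)\lambda(\mu_1)/2$, both of which the statement permits. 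For $t<0$ the paper bounds $-t\lambda(\nu)$ by $|t|\sup_\nu\lambda(\nu)$ and compares to $\htop(f)=P(\phi_0)$ rather than to $\lambda(\mu_1)$ via $L=\log\|Df\|_\infty$; these are equivalent, with $\sup\lambda \le L$ and $\lambda(\mu_1)\le\htop(f)$. For $t\ge1$ under CE, where you invoke continuity of $t\mapsto P(\phi_t)$ near $t=1$, the paper instead makes the estimate explicit by fixing $\eps = \underline\lambda/2$, introducing $\gamma := \underline\lambda/\lambda(\mu_1)\le1$, and showing that $h_\nu(f)<\eps$ forces $P(\phi_t) - (h_\nu(f)+\int\phi_t\,d\nu) \ge [1-\gamma/2 + t(\gamma-1)]\lambda(\mu_1)$, which stays positive for $1\le t < (1-\gamma/2)/(1-\gamma)$; this yields a concrete $\zeta_2$, whereas your soft continuity argument only guarantees that some $\zeta_2>0$ exists.

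On the obstacle you flag: there is none. The paper cites exactly the result you would need, namely \cite[Theorem 1.2]{BS}, which already gives the \emph{uniform} lower bound $\underline\lambda>0$ for CE multimodal maps (not merely pointwise positivity); your proposed detour through periodic-orbit approximation is unnecessary. With that reference in hand, your plan closes without any remaining gap.
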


\begin{proof}
Any transitive map satisfying \eqref{eq:polynomial}
has an acip $\mu$ with $h_\mu(f) = \lambda(\mu)> 0$. Applying
\eqref{eq:pressure} and Ruelle's inequality \cite{Ruelleineq},
we obtain that $P(\phi_t)> 0$ for $t < 1$.  We let $\eps_0=\eps_0(t):={P(\phi_t)}/2$.  Therefore, it is easy to see that for all $t\in [0,1)$ there
exists $\eps = \eps(t) > 0$ \st $h_\nu(f) + \int \phi_t~d\nu > P(\phi_t)/2$
implies $h_{\nu}(f)>\eps$.  For the case $t<0$, let $\zeta_1:=-\frac{\htop(f)}{4\sup\{\lambda(\nu):\nu\in \M_{erg}\}}$.  Then $h_\nu(f) + \int \phi_t~d\nu > P(\phi_t)/2$ implies $h_\nu(f)> P(\phi_t)/2-t\lambda(\nu)$.  Since $P(\phi_t)> \htop(f)$, for $t\in (\zeta_1, 0)$ we obtain $h_\nu(f)>\htop(f)/4$.

Next assume that the Collet-Eckmann condition \eqref{eq:CE} holds.  We can choose $\zeta_1$ as above.
Define $\underline\lambda:=
\inf\{\lambda(\nu):\nu\in \M_{erg}\}$, and let
$\gamma:= \underline\lambda / \lambda(\mu) \le 1$.
By \cite[Theorem 1.2]{BS} we know that $\underline\lambda>0$.
Take $\eps = \underline{\lambda}/2$.
If $\nu$ is any measure with $h_\nu(f) < \eps$ then
\[
P(\phi_t) - \left(h_\nu(f) + \int \phi_t d\nu\right) \ge
\left[ (1-t) - \left(\frac12 - t\right)\gamma \right] \lambda(\mu)
= \left[ 1-\frac{\gamma}{2} +  t(\gamma-1) \right] \lambda(\mu),
\]
which is bounded away from $0$ for all fixed
$1 \le t < \frac{1-\gamma/2}{1-\gamma}$ (or all $t \ge 1$ if $\gamma = 1$).
Hence, if $h_{\nu}(f) < \eps$, then the free energy of $\nu$
cannot be close to $P(\phi_t)$.
\end{proof}

We are now able to state the following, which relates to part (c)
of Proposition~\ref{prop:all_indu}.

\begin{corollary}
In the setting of Theorems~\ref{thm:poly} and \ref{thm:CE},
there exists $\eta'>0$, a sequence $\{\mu_n\}_n$ \st
$h_{\mu_n}(f)+\int\phi_t~d\mu_n\to P(\phi_t)$ and an inducing
scheme $(X,F)$ given by Theorem~\ref{thm:lyap lift} or a
first extendible return map (as in Lemma~\ref{lem:exten}) \st $\hat\mu_n(\hat X)>\eta'$ for all $n$.
\label{cor:uniform X}
\end{corollary}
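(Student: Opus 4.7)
First I would pick $\{\mu_n\} \subset \M_{erg}$ with $h_{\mu_n}(f)+\int\phi_t\,d\mu_n \to P(\phi_t)$; such a sequence exists by the definition of $P(\phi_t)$ as a supremum. Under the hypotheses of either Theorem~\ref{thm:poly} or Theorem~\ref{thm:CE}, Lemma~\ref{lem:free energy bound} supplies an $\eps > 0$ such that, after discarding finitely many terms, $h_{\mu_n}(f) \ge \eps$ for all $n$. Feeding this $\eps$ into Lemma~\ref{lem:compat entro} yields $R \in \N$, $\eta > 0$ and a set $\hat E \subset \hat I_R$ such that every $\mu_n$ lifts to a probability measure $\hat\mu_n$ on $\hat I$ with $\hat\mu_n(\hat E) > \eta/2$, and $\hat E \cap D$ is uniformly bounded away from $\bd D$ for each $D \in \D \cap \hat I_R$. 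The remaining task is to localise $\hat E$ inside a single inducing base.

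Since $\hat I_R$ is a finite union of domains, $\hat E$ meets at most finitely many of them, say $D_1,\dots,D_N$. For each $n$, $\sum_i \hat\mu_n(\hat E \cap D_i) > \eta/2$, so by pigeonhole some index $i_n$ satisfies $\hat\mu_n(\hat E \cap D_{i_n}) > \eta/(2N)$; passing to a subsequence I may take $i_n$ equal to a fixed domain $D$. Because $\pi(\hat E \cap D)$ is a compact subset of the interior of $\pi(D)$, for $k$ large enough every $k$-cylinder of $\P_k$ meeting $\pi(\hat E \cap D)$ sits compactly inside $\pi(D)$, and only finitely many such cylinders, say $M$ of them, are required to cover $\pi(\hat E \cap D)$. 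A second pigeonhole, together with a further subsequence, isolates a single cylinder $X$ with $\hat\mu_n(\pi^{-1}(X) \cap D) \ge \eta/(2NM)$ for every $n$. Taking $\hat X := \pi^{-1}(X) \cap D$ and letting $(X,F,\tau)$ be the inducing scheme built on $\hat X$ as in the proof of Theorem~\ref{thm:lyap lift} (or, equivalently, the first $\delta$-extendible return scheme of Lemma~\ref{lem:exten} using the same $X$ with $\delta$ small enough that the $\delta$-scaled neighbourhood of $X$ still lies in $\pi(D)$) gives the required bound with $\eta' := \eta/(2NM)$.

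The main obstacle is that a priori distinct $\mu_n$ could concentrate on very different parts of the Hofbauer tower, so no single inducing scheme need work for all of them. Lemma~\ref{lem:compat entro} is what rescues the argument: the set $\hat E$ it produces depends only on $\eps$ and not on the individual measure, and the combination of the finiteness of $\D \cap \hat I_R$ with the uniform separation of $\hat E \cap D$ from $\bd D$ is precisely what permits the two pigeonhole reductions (at the cost of a subsequence) that collapse ``some part of $\hat E$'' down to one fixed cylinder sitting in one fixed domain.
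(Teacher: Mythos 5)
Your proof is correct and takes essentially the same route as the paper: both extract a lower entropy bound from Lemma~\ref{lem:free energy bound}, feed it into Lemma~\ref{lem:compat entro} to get a set $\hat E$ with uniform mass, and then localise to a single cylinder inside a single domain via the finiteness of $\D \cap \hat I_R$ and the separation of $\hat E$ from $\bd\D$. The only difference is that you spell out the two pigeonhole/subsequence steps that the paper compresses into the phrase ``there must exist $\eta'>0$, $D\in \D\cap\hat I_R$, a subset $\hat E' \subset \hat E\cap D$ $\ldots$ and a subsequence $n_k \to \infty$.''
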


\begin{proof}
 From the definition of pressure, there exists $\{\mu_n\}\subset
\M_{erg}$ so that $h_{\mu_n}(f)+\int\phi_t~d\mu_n \to P(\phi_t)$.  By
Lemma~\ref{lem:free energy bound}, there exists $\eps>0$ so that
$h_{\mu_n}(f)\ge \eps$ for all large $n$.  Let $\hat E=\hat E(\eps)$ as
in Lemma~\ref{lem:compat entro}.
Firstly, for the
type of inducing scheme given by Theorem~\ref{thm:lyap lift},
there must exist $\eta'>0$, $D\in \D\cap\hat I_R$, a subset $\hat
E' \subset \hat E\cap D$ with $\pi(\hat E') \in \P_n$ and a
subsequence $n_k \to \infty$ \st $\mu_{n_k}(\hat E')\ge \eta'$.
Then we let $\hat E'$ be the inducing domain $\hat X$ in
Theorem~\ref{thm:lyap lift}.
Lemmas~\ref{lem:compat entro} and \ref{lem:free energy bound}
complete the proof.

For a first extendible inducing scheme as in Lemma~\ref{lem:exten},
the proof follows similarly.  The main point is to notice that the
set $\hat E$ from Lemma~\ref{lem:compat entro} has $\min_{D\in
\D\cap\hat I_R}d(\hat E\cap D, \bd D)>0$.
\end{proof}

\section{A Key Result for Existence and Uniqueness}
\label{sec:our prelims}
The proof of Theorem~\ref{thm:poly} is divided into several steps. We use the Hofbauer tower construction given in Section~\ref{sec:induce} to fix an inducing scheme $F:\bigcup_jX_j \to X$ over $X\in \P_n$. Let $\Phi$ be the induced potential.

The following lemma, the ideas for which go back to Abramov  \cite{Ab}, relates the free energies of the original and the induced system.  See \cite{PS1} for the proof.

\begin{lemma}
If $\mu_F$ is an ergodic measure on $(X,F)$ with $\int \tau d\mu_F
< \infty$, and $\mu$ is the projected measure on $(X,f)$, then
\[
h_{\mu_F}(F) =\left(\int_X\tau~d\mu_F \right) h_{\mu}(f) \hbox{ and }
\int_X\Phi~d\mu_F = \left(\int_X\tau~d\mu_F
\right)\int_I\phi~d\mu.
\]
where $\Phi$ is the lifted potential of $\phi$.
\label{lem:Abra}\end{lemma}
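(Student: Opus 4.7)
My plan is to handle the two identities separately, since they call for quite different arguments.

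For the integral identity, I would unpack the definition $\Phi|_{X_i} = \sum_{j=0}^{\tau_i-1}\phi\circ f^j$ and write
\[
\int_X \Phi\, d\mu_F \;=\; \sum_i\sum_{j=0}^{\tau_i-1}\int_{X_i}\phi\circ f^j\, d\mu_F \;=\; \int_I \phi\, d\sigma,
\]
where $\sigma := \sum_i\sum_{j=0}^{\tau_i-1}(f^j)_*(\mu_F|_{X_i})$ is a finite positive Borel measure on $I$. The lift formula \eqref{eq:lift} is exactly the statement that $\sigma = \bigl(\int_X \tau\, d\mu_F\bigr)\cdot \mu$ on Borel sets, and this extends to $\phi$ by approximation by simple functions (the integrability of $\phi$ against $\mu$ being the one point to check via the assumption $\int\Phi\, d\mu_F$ is defined). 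This yields the second equality of the lemma.

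For the entropy identity the plan is to reduce to the classical Abramov first-return formula via a tower. I would set
\[
\hat X \;:=\; \{(y,k) : y\in \textstyle\bigcup_i X_i,\ 0\le k<\tau(y)\},
\]
equip $\hat X$ with the shift map $\hat F(y,k)=(y,k+1)$ when $k+1<\tau(y)$ and $\hat F(y,\tau(y)-1)=(F(y),0)$ otherwise, and define the $\hat F$-invariant probability measure $\hat\mu$ that places a copy of $\mu_F|_{X_i}$ at each level $0\le k<\tau_i$ and is normalised by $\int_X\tau\, d\mu_F$. Then $F$ is precisely the first-return map of $\hat F$ to the base $\hat X_0:=X\times\{0\}$ and $\hat\mu(\hat X_0)=\bigl(\int_X\tau\, d\mu_F\bigr)^{-1}$, so Abramov's classical first-return theorem delivers
\[
h_{\mu_F}(F) \;=\; \frac{h_{\hat\mu}(\hat F)}{\hat\mu(\hat X_0)} \;=\; \Bigl(\int_X \tau\, d\mu_F\Bigr)\, h_{\hat\mu}(\hat F).
\]
It remains to identify $h_{\hat\mu}(\hat F)$ with $h_\mu(f)$ via the factor map $\pi(y,k):=f^k(y)$, which intertwines $\hat F$ with $f$ and satisfies $\pi_*\hat\mu = \mu$.

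The main obstacle is this last step: because $\pi$ is only countable-to-one (a point $x\in I$ can be reached from any $(y,k)$ with $y\in X_i$ and $f^k(y)=x$), the Ledrappier--Walters factor inequality a priori gives only $h_{\hat\mu}(\hat F)\ge h_\mu(f)$. To upgrade to equality I would use the Abramov--Rokhlin relative-entropy formula and argue that the relative entropy $h_{\hat\mu}(\hat F\mid\pi)$ vanishes: on a $\hat\mu$-full set the fibre label $(y,k)$ is determined measurably from the forward $f$-orbit of $\pi(y,k)$, via its successive returns to $X$ together with the branch $X_i$ containing the most recent return. Once this measurable fibre-rigidity is established, $h_{\hat\mu}(\hat F)=h_\mu(f)$ and the entropy identity follows; everything else is routine bookkeeping via \eqref{eq:lift} and the definitions of $\hat F$ and $\hat\mu$.
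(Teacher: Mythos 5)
The paper does not actually prove this lemma; it refers the reader to \cite{PS1} (Pesin--Senti), so there is no in-paper argument to compare against. Your proof of the integral identity is fine: writing $\sigma=\sum_i\sum_{j<\tau_i}(f^j)_*(\mu_F|_{X_i})$ and observing that \eqref{eq:lift} is precisely $\sigma=(\int\tau\,d\mu_F)\mu$ is correct, and the extension from indicator functions to $\phi$ is routine. Your framework for the entropy identity is also sound: building the Young tower $(\Delta,\hat F,\hat\mu)$ with $F$ as the first-return map to the base $\Delta_0$ and invoking Abramov's first-return theorem to get $h_{\mu_F}(F)=(\int\tau\,d\mu_F)\,h_{\hat\mu}(\hat F)$ is exactly right, and you correctly identify the remaining issue, namely that the countable-to-one projection $\pi_\Delta$ only gives $h_{\hat\mu}(\hat F)\ge h_\mu(f)$ a priori.

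The gap is in how you close that issue. Your ``measurable fibre-rigidity'' claim --- that for $\hat\mu$-a.e.\ $(y,k)$ the label is determined by the forward $f$-orbit of $\pi(y,k)$, via its returns to $X$ and the branch of the latest return --- is false, and for two separate reasons. First, the inducing scheme in this lemma is \emph{not} a first-return scheme: $\tau_i$ need not be the first return time to $X$, so the successive visits of the forward orbit to $X$ do not tell you when the tower orbit is at the base. Second, and more fundamentally, if the forward $f$-orbit of $\pi(y,k)$ determined $(y,k)$, then $\pi_\Delta$ would be essentially injective on a set of full $\hat\mu$-measure, which is simply not the case: take $f$ the doubling map, $X=I$, and $\tau\equiv 2$ (so the $X_i$ are the $2$-cylinders and $F=f^2$); then the fibre $\pi_\Delta^{-1}(z)=\{(z,0),(z/2,1),(z/2+1/2,1)\}$ has three points each of positive conditional $\hat\mu$-mass, and all three have the same forward $f$-orbit under projection. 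So your proposed rigidity statement cannot hold, even though the conclusion $h_{\hat\mu}(\hat F)=h_\mu(f)$ is true in that example (both equal $\log 2$). In other words, $\pi_\Delta$ is genuinely entropy-preserving despite not being a.e.\ injective, and the step from ``relative entropy vanishes'' to the identity needs an argument that does not go through determinism of the fibre label from the forward base orbit. This last step is the actual content of the entropy half of Abramov's formula for general (non-first-return) inducing schemes, and dismissing it as ``routine bookkeeping'' once fibre rigidity is asserted misplaces where the work lies; you should either supply a correct argument for $h_{\hat\mu}(\hat F|\pi)=0$ (e.g.\ via the tower's countable Markov generator and a coding estimate, or via the structure of the fibre maps $\hat F_x:\pi_\Delta^{-1}(x)\to\pi_\Delta^{-1}(f(x))$) or cite a reference such as \cite{PS1} or \cite{Zwei} for it.
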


It is easy to show that putting $\phi:=\log|Df|$ into the above
lemma proves that for any full-branched inducing scheme with
ergodic invariant measure $\mu_F$, the measure projects to a
measure $\mu$ with $\lambda(\mu)>0$.

Suppose that $\phi:I\to \R$ is the potential for the original system.  We will deal with the shifted potential $\psi_S:=\phi-S$.  Given an inducing scheme $(X,F)$ with $F=f^\tau$, let $\Psi_S$ be the induced potential, i.e., $\Psi_S:=\Phi-\tau S$.  The following lemma resembles
the argument of \cite[Proposition 10]{Sathesis}.  An important difference
here is that we do not require that the original potential has summable variations.

\begin{lemma}
Suppose that $P_G(\Psi_{S^*})<\infty$ and $\Phi$ has summable variations.  Then $P_G(\Psi_S)$ is decreasing and continuous in $[S^*, \infty)$. \label{lem:press cts}
\end{lemma}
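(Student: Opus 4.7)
The plan is to obtain monotonicity from the pointwise bound $\tau \ge 1$, continuity on the open interval $(S^*,\infty)$ from convexity, and continuity at the endpoint $S^*$ from an almost-superadditivity of the partition functions $Z_n(\Psi_S)$.

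For monotonicity, note that $\tau(x) \ge 1$ gives $\Psi_{S_2} \le \Psi_{S_1} - (S_2-S_1)$ pointwise whenever $S^* \le S_1 \le S_2$; summing $e^{(\Psi_{S_2})_n(x)} \le e^{-n(S_2-S_1)} e^{(\Psi_{S_1})_n(x)}$ over $F^n$-periodic points in $X_i$ yields $Z_n(\Psi_{S_2}) \le e^{-n(S_2-S_1)} Z_n(\Psi_{S_1})$, and therefore
\[
P_G(\Psi_{S_2}) \le P_G(\Psi_{S_1}) - (S_2-S_1).
\]
This yields both the strict monotonicity of $P_G(\Psi_S)$ on $[S^*,\infty)$ and the upper semicontinuity $\limsup_{S\to S_0} P_G(\Psi_S) \le P_G(\Psi_{S_0})$ at each $S_0\in[S^*,\infty)$. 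Next, each finite-$n$ quantity $\frac{1}{n}\log Z_n(\Psi_S) = \frac{1}{n}\log \sum e^{\Phi_n(x) - S\tau_n(x)}\mathbf{1}_{X_i}(x)$ is a log-sum-exp in $S$ and hence convex. Under the BIP property (automatic because the inducing scheme is full-branched) and topological mixing, $\frac{1}{n}\log Z_n(\Psi_S) \to P_G(\Psi_S)$, so $P_G(\Psi_\cdot)$ is the pointwise limit of convex functions on $[S^*,\infty)$, hence convex, and therefore continuous on the interior $(S^*,\infty)$.

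The remaining step is continuity at $S^*$ from the right. I will complement the upper semicontinuity above with a lower bound via Sarig-style BIP bridging \cite{Sathesis}: because $\tau$ is constant on each $X_i$ we have $V_n(\Psi_S) = V_n(\Phi)$ for every $S$ and every $n$, so the summable-variations hypothesis on $\Phi$ transfers to $\Psi_S$ \emph{uniformly} in $S$. Combined with BIP this produces constants $c>0$, $p\in\N$, independent of $S\in[S^*,\infty)$, with
\[
Z_{n+m+p}(\Psi_S) \ge c\,Z_n(\Psi_S)\,Z_m(\Psi_S) \qquad \text{for all } n,m\ge 1.
\]
Iterating and letting $k\to\infty$ in $Z_{k(N+p)}(\Psi_S) \ge c^{k-1} Z_N(\Psi_S)^k$ gives $P_G(\Psi_S) \ge \frac{1}{N+p}\log Z_N(\Psi_S)$ for every $N$. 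For each fixed $N$, the map $S\mapsto Z_N(\Psi_S)$ is continuous on $[S^*,\infty)$ by dominated/monotone convergence (summand-by-summand continuity, with summable majorant $Z_N(\Psi_{S^*})<\infty$). Given $\varepsilon>0$, choose $N$ with $\tfrac{1}{N+p}\log Z_N(\Psi_{S^*}) > P_G(\Psi_{S^*}) - \varepsilon$; continuity of $Z_N$ at $S^*$ then gives $P_G(\Psi_S) > P_G(\Psi_{S^*}) - 2\varepsilon$ for all $S$ close enough to $S^*$, and letting $\varepsilon\downarrow 0$ completes the proof.

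The main obstacle is the uniformity of the bridging constants $c$ and $p$ as $S$ approaches the boundary $S^*$: this is precisely where the hypothesis $\sum_n V_n(\Phi)<\infty$ is indispensable, because the bridge-distortion entering $c$ is controlled by this series and its $S$-independence prevents degeneration at the endpoint. Without summable variations this uniformity could fail and the lower bound on $P_G(\Psi_S)$ could not be transferred down to $S=S^*$.
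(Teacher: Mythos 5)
Your argument is correct and closely parallels the paper's proof: both derive continuity from the almost-superadditivity of $n\mapsto\log Z_n(\Psi_S)$ supplied by the summable-variations hypothesis (your ``BIP bridging'' inequality $Z_{n+m+p}(\Psi_S)\ge c\,Z_n(\Psi_S)Z_m(\Psi_S)$ is, for a full-branched inducing scheme, exactly the paper's $\log Z_{m_1}(\Psi_S)+\log Z_{m_2}(\Psi_S)\le\log Z_{m_1+m_2}(\Psi_S)+\log B$ with $p=0$), together with continuity of $S\mapsto Z_n(\Psi_S)$ at fixed $n$. Two remarks. First, a small slip: the pointwise estimate $P_G(\Psi_{S_2})\le P_G(\Psi_{S_1})-(S_2-S_1)$ gives only the one-sided bound $\limsup_{S\downarrow S_0}P_G(\Psi_S)\le P_G(\Psi_{S_0})$, not the two-sided ``upper semicontinuity'' $\limsup_{S\to S_0}P_G(\Psi_S)\le P_G(\Psi_{S_0})$ you state; for a decreasing function, controlling the limit from the left is precisely the nontrivial direction, and monotonicity alone does not deliver it. The error is harmless in your write-up, since you handle interior points via convexity and only invoke the (correct) one-sided version at the endpoint $S^*$, but the phrasing should be fixed. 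Second, and in your favour: the paper's own reduction (``it suffices to find, for each $S_0$ and $\eps>0$, some $S>S_0$ with $P_G(\Psi_S)>P_G(\Psi_{S_0})-\eps$'') establishes only right-continuity of the decreasing function $S\mapsto P_G(\Psi_S)$, which is strictly weaker than continuity. Your observation that each $\frac{1}{n}\log Z_n(\Psi_\cdot)$ is a log-sum-exp and hence convex, so that $P_G(\Psi_\cdot)$ is convex (being a supremum, after subtracting $\frac{\log B}{n}$) and therefore continuous on $(S^*,\infty)$, is exactly what supplies the left limit at interior points, and makes the argument fully two-sided. Also note that your superadditivity bound should retain the constant, $P_G(\Psi_S)\ge\frac{\log Z_N(\Psi_S)+\log c}{N+p}$, but since $\frac{\log c}{N+p}\to 0$ this does not affect the limiting argument.
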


\begin{proof}
We first recall some facts.  By definition, $P_G(\Psi_S):=\lim_{n\to \infty}\frac1n \log Z_n(\Psi_S,X_i)$ where $Z_n(\Psi_S,X_i):=\sum_{F^nx=x}e^{(\Psi_{S})_n(x)}1_{X_i}= \sum_{F^nx=x}e^{\Phi_n(x)-S\tau^n(x)}1_{X_i}$.
As in \cite{Sathesis}, topological mixing implies that $P_G(\Psi_S)$ is independent of $X_i$,
and we suppress $X_i$ in the notation accordingly.  Clearly, $P_G(\Psi_S)$
is decreasing in $S$.  We also know that since we have summable variations for $\Phi$,
i.e., there exists $B<\infty$ \st $\sum_{k=1}^\infty V_n(\Phi)<B$,
we have for any $S$,
\begin{equation}
\log Z_{m_1}(\Psi_S) +\log Z_{m_2}(\Psi_S)\le \log Z_{m_1+m_2}(\Psi_S)+\log B,
\label{eq:subadd}
\end{equation} see the proof of \cite[Proposition 1]{Sathesis}.

Since $P_G(\Psi_S)$ is decreasing in $S$, it is sufficient to show that for any $S_0\ge S^*$ and any $\eps>0$, there exists $S>S_0$ \st $P_G(\Psi_S)> P_G(\Psi_{S_0})-\eps$.  Fix $\eps>0$ and $n_0$ so large that
$\frac{\log B}{n_0}<\frac\eps 3$.
By definition of $P_G(\Psi_{S_0})$, for a large enough $n \ge n_0$,
$$
\frac1{n}\log Z_{n}(\Psi_{S_0})\ge P_G(\Psi_{S_0})-\frac\eps3.
$$
Since $Z_{n}(\Psi_{S})$ is continuous in $S$, there exists $S>S_0$ \st
$$
\frac1{n}\log Z_{n}(\Psi_{S})>P_G(\Psi_{S_0})-\frac23\eps.
$$
Then by \eqref{eq:subadd} and writing $m=kn+r$ where $0\le r\le n-1$,
\begin{align*}
\frac{\log Z_m(\Psi_S)}m & \ge \frac{k \log Z_n(\Psi_S)+\log Z_r(\Psi_S)-(k+1)\log B}{kn+r}\\
& \stackrel{m\to \infty}{\longrightarrow}
\frac{\log Z_n(\Psi_S)}n-\frac{\log B}n \ge P_G(\Psi_{S_0})-\eps
\end{align*} as required.
\end{proof}

The following result is a key tool in proving
Theorems~\ref{thm:poly} and \ref{thm:CE}. It gives necessary
conditions, comparable to the abstract conditions presented in
\cite{PS1}, to push equilibrium states through inducing
procedures.  Notice that Case 4 is reminiscent of the ideas
involved in the Discriminant Theorem, \cite[Theorem 2]{Saphase}.
However, our approach seems more natural in this context.

\begin{proposition}\label{prop:all_indu}
Suppose that $\psi$ is a potential with $P(\psi)=0$.  Let $\hat X$ be
the set used in either Theorem~\ref{thm:lyap lift} or
Lemma~\ref{lem:exten} to construct the corresponding inducing scheme
$(X,F,\tau)$.  Suppose that the lifted potential $\Psi$ has
$\|\L_\Psi 1\|_\infty<\infty$ and $\sum_{n\ge 1}V_n(\Psi)<\infty$.

Consider the assumptions:
\begin{itemize}
\item[(a)] $\sum_i\tau_i e^{\Psi_i}<\infty$ for
$\Psi_i = \sup_{x \in X_i} \Psi(x)$;
\item[(b)] there exists an equilibrium state
$\mu\in \M_+$ compatible to $(X,F,\tau)$;
\item[(c)] there exist a sequence
$\{\eps_n\}_n\subset \R^-$ with
$\eps_n \to 0$ and measures
$\{\mu_n\}_n\subset \M_+$ \st every $\mu_n$ is compatible to
$(X,F,\tau)$,
$h_{\mu_n}(f)+\int\psi~d\mu_n = \eps_n$ and
$P_G(\Psi_{\eps_n})<\infty$ for all $n$;
\item[(d)] $P_G(\Psi)=0$.
\end{itemize}
If any of the following combinations of assumptions holds:
\[
\left\{
\begin{array}{ll}
1. & \mbox{(b) and (d)}; \\
2. & \mbox{(a) and (d)}; \\
3. & \mbox{(a) and (b)}; \\
4. & \mbox{(a) and (c)};
\end{array} \right.
\]
then there is a unique equilibrium state $\mu$ for $(I,f,\psi)$
among measures $\mu \in \M_+$ with $\hat\mu(\hat X)>0$. Moreover,
$\mu$ is obtained by projecting the equilibrium state $\mu_\Psi$ of
the inducing scheme and in all cases we have $P_G(\Psi)=0$.
\end{proposition}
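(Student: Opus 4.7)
The plan is to reduce each of the four cases to a single application of Sarig's theorem (Theorem~\ref{thm:BIP}) on the induced system $(X,F,\Psi)$: produce a unique Gibbs equilibrium measure $\mu_\Psi$ there, verify $\int_X \tau\,d\mu_\Psi<\infty$, project it to $I$ via \eqref{eq:lift}, and invoke Abramov's formula (Lemma~\ref{lem:Abra}) to certify the projection as an equilibrium state on $I$. The BIP condition holds automatically because $F$ is full-branched, summable variations of $\Psi$ are assumed, and $\|\L_\Psi 1\|_\infty<\infty$ forces $P_G(\Psi)<\infty$. Granted $P_G(\Psi)=0$, Theorem~\ref{thm:BIP} delivers $\mu_\Psi$ as the unique equilibrium state of $(X,F,\Psi)$ with $h_{\mu_\Psi}(F)+\int\Psi\,d\mu_\Psi=0$, so the projection $\mu$ will automatically satisfy $h_\mu(f)+\int\psi\,d\mu=0=P(\psi)$ by Lemma~\ref{lem:Abra}.

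The integrability $\int\tau\,d\mu_\Psi<\infty$ is the easier input. Under (a), the Gibbs bound $\mu_\Psi(X_i)\le K e^{\Psi_i-P_G(\Psi)}$ combined with $P_G(\Psi)=0$ yields $\int\tau\,d\mu_\Psi\le K\sum_i\tau_i e^{\Psi_i}<\infty$; under (b), compatibility of the given equilibrium state $\mu$ provides $\int\tau\,d\mu_F<\infty$, and Sarig's uniqueness identifies $\mu_F$ with $\mu_\Psi$. It remains to establish $P_G(\Psi)=0$ in each case.

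Cases~1 and 2 assume (d) outright. In case~3, applying Lemma~\ref{lem:Abra} to the equilibrium state $\mu$ of (b) produces $\mu_F$ with free energy $0$ for $\Psi$, so the variational inequality gives $P_G(\Psi)\ge 0$; conversely if $P_G(\Psi)>0$, the Gibbs bound together with (a) still yields $\int\tau\,d\mu_\Psi\le Ke^{-P_G(\Psi)}\sum_i\tau_i e^{\Psi_i}<\infty$, so projecting $\mu_\Psi$ produces a measure on $I$ with strictly positive free energy, contradicting $P(\psi)=0$. In case~4, apply Lemma~\ref{lem:Abra} to each $\mu_n$ with the shifted induced potential $\Psi_{\eps_n}=\Psi-\tau\eps_n$: the lift $(\mu_n)_F$ has free energy exactly $0$ for $\Psi_{\eps_n}$, forcing $P_G(\Psi_{\eps_n})\ge 0$ for every $n$; the finiteness $P_G(\Psi_{\eps_n})<\infty$ from (c) together with the summable variations of $\Phi$ permit Lemma~\ref{lem:press cts} to give continuity of $S\mapsto P_G(\Psi_S)$ at $S=0$, whence $P_G(\Psi)=\lim_n P_G(\Psi_{\eps_n})\ge 0$; the upper bound $P_G(\Psi)\le 0$ follows by the same projection-and-contradict argument as in case~3.

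Finally, uniqueness among $\nu\in\M_+$ with $\hat\nu(\hat X)>0$ is immediate: any such equilibrium state is compatible to $(X,F,\tau)$, its lift $\nu_F$ has free energy $0=P_G(\Psi)$ for $\Psi$ by Lemma~\ref{lem:Abra}, and Sarig's uniqueness clause in Theorem~\ref{thm:BIP} forces $\nu_F=\mu_\Psi$, hence $\nu=\mu$. I expect the main obstacle to be case~4, where the nontrivial continuity input from Lemma~\ref{lem:press cts} is what allows us to transfer the variational lower bounds $P_G(\Psi_{\eps_n})\ge 0$ through the limit $\eps_n\to 0$; the remaining work is a routine combination of Abramov's identity with the Gibbs estimate coming from (a).
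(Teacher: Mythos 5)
Your proposal is correct and follows essentially the same strategy as the paper's proof: establish $P_G(\Psi)=0$ in each case, apply Theorem~\ref{thm:BIP} to get the unique Gibbs equilibrium state $\mu_\Psi$ for the induced system, verify $\int_X\tau\,d\mu_\Psi<\infty$ via the Gibbs bound under (a) or via compatibility under (b), project by Theorem~\ref{thm:lyap lift}, and transfer free energies with Lemma~\ref{lem:Abra}. The only stylistic deviation is in Cases~3 and~4, where you derive the upper bound $P_G(\Psi)\le 0$ by a contrapositive (assume $P_G(\Psi)>0$, project $\mu_\Psi$, contradict $P(\psi)=0$) rather than the paper's direct statement that the projection has free energy $\le P(\psi)=0$ and hence $P_G(\Psi)\le 0$ by Lemma~\ref{lem:Abra} and the Variational Principle; these are the same computation phrased in opposite directions.
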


\begin{remark}
As noted in the proof, if $\mu_\Psi$ is the equilibrium state for
$(X,F,\Psi)$ given by Theorem~\ref{thm:BIP} then the condition
$\sum_i\tau_i e^{\Psi_i} < \infty$ implies that $\int_Y \tau
d\mu_\Psi < \infty$ by the Gibbs property of $\mu_\Psi$.
\end{remark}

\begin{proof}[Proof of Proposition~\ref{prop:all_indu}]
As in Section~\ref{sec:prel}, Proposition 1 of
\cite{Sathesis} implies that $Z_n(\Psi)=O(\|\L_\Psi 1
\|_\infty^n)$.  Therefore $\|\L_\Psi 1
\|_\infty<\infty$ implies $P_G(\Psi)<\infty$.  So in any case we can
immediately apply Theorem~\ref{thm:BIP} to obtain a measure
$\mu_\Psi$, and moreover the Variational Principle holds.

{\bf Case 1. (b) and (d) hold:} By definition of compatibility, we
can lift $\mu$ to $\mu_F$ where $\int \tau~d\mu_F<\infty$. By
Lemma~\ref{lem:Abra} we have
$$
0=P(\psi)=\left(\int\tau~d\mu_F\right)
\left(h_\mu(f)+\int\psi~d\mu\right) =
h_{\mu_F}(F)+\int\Psi~d\mu_F.
$$
Since we also have $P_G(\Psi)=0$,
the Variational Principle (Theorem~\ref{thm:BIP} (b)) implies that
$\mu_F$ is an equilibrium state for the inducing scheme. From the
uniqueness of the measure given by Theorem~\ref{thm:BIP}, we have
$\mu_F=\mu_\Psi$.  So $\mu$ is the same as the projection of
$\mu_\Psi$ given by Theorem~\ref{thm:lyap lift}, as required. Note
that by Lemma~\ref{lem:Abra}, $h_{\mu_\Psi}(F)<\infty$ and
$-\int\Psi~d\mu_\Psi <\infty$.

{\bf Case 2: (a) and (d) hold:}  By the Gibbs property of
$\mu_\Psi$ we have
$$
\int\tau~d\mu_\Psi\asymp_{dis} \sum_i\tau_i e^{\Psi_i-P_G(\Psi)}<\infty.
$$
This implies that we can use Theorem~\ref{thm:lyap lift} to
project $\mu_\Psi$ to an $f$-invariant measure $\mu_\psi\in
\M_+$. By Lemma~\ref{lem:Abra}, $h_{\mu_\Psi}(F)<\infty$ and
$-\int\Psi~d\mu_\Psi <\infty$.
So by Theorem~\ref{thm:BIP} part (a), $\mu_\Psi$ is an equilibrium,
and the Variational Principle (i.e.,  Theorem~\ref{thm:BIP} part (b))
we have  $P_G(\Psi)=P(\Psi) = h_{\mu_\Psi}(F) +\int\Psi~d\mu_\Psi$.

Now condition (d) gives that $P_G(\Psi)=P(\Psi) = 0$.
Thus Lemma~\ref{lem:Abra} implies that
$h_{\mu_\psi}(f)+\int\psi~d\mu_\psi = 0$, so $\mu_\psi$ is an
equilibrium state. We can then use the argument of Case 1 to show that
this is the unique equilibrium state in $\M_+$ with $\hat \mu(\hat X)
= (\int \tau \ d\hat\mu)^{-1} > 0$.

{\bf Case 3: (a) and (b) hold:} We start as in Case 2; condition (a)
gives a measure
$\mu_\psi$ having $h_{\mu_\psi}(f) +\int\psi~d\mu_\psi \le
P(\psi)=0$.  By Lemma~\ref{lem:Abra} and the Variational Principle
this implies $P_G(\Psi)\le 0$.

Assumption (b) gives an equilibrium state $\mu \in \M_+$ which can be
lifted, using Theorem~\ref{thm:lyap lift}, to $\mu_F$ on $(X,F,\tau)$.
Now since we also have $0= h_{\mu}(f) +\int\psi~d\mu$, Lemma~\ref{lem:Abra}
implies that $0 \le \int \tau \ d\mu_F (h_{\mu}(f) +\int\psi~d\mu)
\le P(\Psi)$ and by the Variational Principle,
$0 \le P_G(\Psi)$ as well.
Thus we have $P_G(\Psi)=0$ and we can apply the argument of Case 1.

{\bf Case 4: (a) and (c) hold:} By the argument of Case 2 we have
an equilibrium state $\mu_\psi$.  Therefore, if we can show that
$P_G(\Psi)=0$, Case 1 above completes the proof.

The argument for Case 3 showed that $P_G(\Psi)\le 0$.
By (c), $h_{\mu_n}(f)+\int (\psi-\eps_n)~d\mu_n = -\eps_n > 0$. Let
$\mu_{n,F}$ be the corresponding lifted measure obtained from
Theorem~\ref{thm:lyap lift}. Then by Lemma~\ref{lem:Abra},
$0 \le h_{\mu_{n,F}}(F)+\int_X \Psi_{\eps_n}~d\mu_{n,F} \le
P_G(\Psi_{\eps_n})$.
Lemma ~\ref{lem:press cts} implies that we can  take the limit to get
$P_G(\Psi) = \lim_{n\to\infty} P_G(\Psi_{\eps_n}) = 0$.
\end{proof}

We next present a technical result, which when applied
to the settings of Theorems~\ref{thm:poly} and \ref{thm:CE},
shows that any measure with free energy close to our equilibrium states
lifts to a single inducing scheme, see Lemma~\ref{lem:uniform scheme for nat}.

Lemma~\ref{lem:compat entro} says that given $\eps>0$ there exists $\eta=\eta(\eps)$ and $\hat E=\hat E(\eps)$, a compact set bounded away from $\bd\D$, so that $h_\mu(f)>\eps$ for $\mu\in \M$ implies $\hat\mu(\hat E)>\eta$.
This implies that for a measure $\mu \in \M_+$,
in particular an equilibrium state $\mu_\psi$, we can
choose $X^0\in \P_n$ so that for the set $\hat X^0$ as in Theorem~\ref{thm:lyap lift} (or Lemma~\ref{lem:exten} if a first extendible return map is preferred)
$\hat \mu_\psi(\hat X^0 \cap \hat E) > 0$.
Next we add a finite collection of
cylinder sets $X^k \in \cup_{j\ge n}\P_j$, $k = 1, \dots, N$, so that
if we create the sets $\hat X^k \subset \pi^{-1}(X^k)$ in the
same way (i.e., as in
Theorem~\ref{thm:lyap lift} or as in Lemma~\ref{lem:exten}),
then  $\hat E \subset \left(\cup_{0\le k\le N}\hat X^k\right)$.
In this case we say that $\{\hat X^k\}_{0\le k\le N}$ satisfies
property $Cover(\eps)$.
The next proposition shows that there is a single inducing scheme that is compatible to every measure in $\M_+$ whose free energy is sufficiently close
to the pressure.

\begin{proposition}
Suppose that $\psi:I\to [-\infty, \infty)$ is a potential with $P(\psi)=0$
so that  $\psi(x) > -\infty$ on $I \sm \Crit$.
Suppose also that there exist $\eps_0,\, \eps>0$ \st
$h_{\mu'}(f)+\int\psi~d\mu' > -\eps_0$ implies $h_{\mu'}(f)>\eps$.
Let $\{\hat X^k\}_{0\le k\le N}$ satisfy $Cover(\eps)$ where $\mu_\psi$ is
compatible to $(X^0,F_0)$.
Suppose that the induced potentials $\Psi^k$
and inducing times $\tau^k$ corresponding to the inducing schemes
$(X^k, F_k)$ satisfy:
\begin{itemize}
\item[(a)] $\sum_nV_n(\Psi^k)<\infty$ for all $0\le k\le N$;
\vspace*{1mm}
\item[(b)] $\sum_i \tau_i^k e^{\sup\{\Psi^k(x)\, :\, x \in X_i^k\}} < \infty$
(i.e.,  condition (a) of Proposition~\ref{prop:all_indu} holds for
$\Psi^k$)  for all $0\le k\le N$.
\end{itemize}
Then there exists $\theta=\theta(\eps,\{\hat X^k\}_{0\le k\le N})>0$ so that $h_{\mu}(f)+\int\psi~d\mu>-\theta$ implies $\hat\mu(\hat X_0)>0$. \label{prop:unif scheme}
\end{proposition}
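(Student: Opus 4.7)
I argue by contradiction. Suppose no such $\theta$ exists, so there is a sequence $\{\mu_n\}\subset\M_{erg}$ with $h_{\mu_n}(f)+\int\psi\,d\mu_n\to 0^-$ and $\hat\mu_n(\hat X^0)=0$ for every $n$. The entropy-threshold hypothesis together with Ruelle's inequality and \cite{Prz} puts $\mu_n\in\M_+$ eventually, so each $\mu_n$ lifts to $\hat\mu_n$ on the Hofbauer tower. Lemma~\ref{lem:compat entro} gives $\hat\mu_n(\hat E)>\eta/2$; combining $\hat E\subset\bigcup_{k=0}^N\hat X^k$ (from $Cover(\eps)$) with $\hat\mu_n(\hat X^0)=0$, the pigeonhole principle plus passage to a subsequence produces a fixed $k\in\{1,\ldots,N\}$ with $\hat\mu_n(\hat X^k)\ge\eta/(2N)$ for all $n$. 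Hence $\mu_n$ is compatible to $(X^k,F_k)$, and Kac's lemma gives the uniform bound $\int\tau^k\,d\mu_{n,F_k}=1/\hat\mu_n(\hat X^k)\le 2N/\eta$.

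\emph{Forcing $P_G(\Psi^k)=0$.} Abramov's relation (Lemma~\ref{lem:Abra}) then yields
\[
h_{\mu_{n,F_k}}(F_k)+\int\Psi^k\,d\mu_{n,F_k}=\Bigl(\int\tau^k\,d\mu_{n,F_k}\Bigr)\bigl(h_{\mu_n}(f)+\int\psi\,d\mu_n\bigr)\longrightarrow 0.
\]
Each side is $\le 0$ since $P(\psi)=0$ together with Lemma~\ref{lem:Abra} bounds $P_G(\Psi^k)\le 0$; the Variational Principle for $\Psi^k$ (applicable under hypotheses (a)--(b) via Theorem~\ref{thm:BIP}) together with the sequence above gives $P_G(\Psi^k)\ge 0$. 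Hence $P_G(\Psi^k)=0$, which is condition (d) of Proposition~\ref{prop:all_indu} for $\Psi^k$.

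\emph{Producing a second equilibrium.} Hypothesis (b) of this proposition is precisely condition (a) of Proposition~\ref{prop:all_indu}, so combined with $P_G(\Psi^k)=0$, Case~2 of Proposition~\ref{prop:all_indu} applied to $(X^k,F_k,\psi)$ produces an equilibrium state $\mu^{(k)}\in\M_+$ for $(I,f,\psi)$ compatible to $(X^k,F_k)$. In parallel, Case~3 of Proposition~\ref{prop:all_indu} applied to $(X^0,F_0,\psi)$ via the given $\mu_\psi$ forces $P_G(\Psi^0)=0$ and identifies $\mu_\psi$ as the unique equilibrium state in $\M_+$ with $\hat\mu(\hat X^0)>0$. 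Since $\mu^{(k)}$ is an equilibrium state, $h_{\mu^{(k)}}(f)+\int\psi\,d\mu^{(k)}=0>-\eps_0$, so the entropy-threshold hypothesis gives $h_{\mu^{(k)}}(f)>\eps$, and applying Lemma~\ref{lem:compat entro} and $Cover(\eps)$ yields $\hat\mu^{(k)}(\hat X^j)>0$ for some index $j$.

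\emph{Closing the contradiction — the main obstacle.} The conclusion requires the index $j$ to include $j=0$, i.e., $\hat\mu^{(k)}(\hat X^0)>0$: once established, Case~3 uniqueness forces $\mu^{(k)}=\mu_\psi$, so $\mu_\psi$ is itself compatible to $(X^k,F_k)$, and the Gibbs property of $\mu_\psi$ on the $(X^k,F_k)$-cylinder structure (Theorem~\ref{thm:BIP}(c)) together with weak-$*$ convergence (or tightness) of $\{\hat\mu_n\}$ on the compact piece $\hat I_R$ from Lemma~\ref{lem:compat entro} produces a uniform positive lower bound on $\hat\mu_n(\hat X^0)$ for large $n$, contradicting $\hat\mu_n(\hat X^0)=0$. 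The hardest step is establishing $\hat\mu^{(k)}(\hat X^0)>0$, which does not follow from ergodicity alone. The expected strategy is to iterate the Case~2 mechanism across the finite collection $\{\hat X^j\}_{0\le j\le N}$, exploiting the topological transitivity of $\hat f$ on the closed transitive subgraph $\E$ (Lemma~\ref{lem:trans_subgraph}) and the Gibbs property of the projected measure to propagate positive mass from $\hat X^k$ into $\hat X^0$.
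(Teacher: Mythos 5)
The central difficulty of this proposition — and you correctly flag it as "the main obstacle" — is to show that positive $\hat\mu$-mass on some $\hat X^k$ ($k\ne 0$), together with free energy close enough to $0$, forces positive $\hat\mu$-mass on $\hat X^0$. Your proposal leaves this step open: you write that the expected strategy is to iterate the Case~2 mechanism and exploit transitivity and the Gibbs property, but transitivity only gives you a topological path from $\hat X^k$ into $\hat X^0$, not a quantitative statement that a general (non-equilibrium) measure must actually put mass there. A proof-by-contradiction via a sequence $\{\mu_n\}$ with free energies $\to 0^-$ and $\hat\mu_n(\hat X^0)=0$ needs to exclude the possibility that all the $\hat\mu_n$ concentrate on orbits that never hit $\hat X^0$, and nothing in your argument rules this out.

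The idea the paper uses, and which is missing from your proposal, is a \emph{pressure gap} for a restricted subsystem. Fix $k$ and split the $1$-cylinders of $(X^k,F_k)$ into type (i), those whose $\hat f$-orbit hits $\hat X^0$ before returning to $\hat X^k$, and type (ii), those which do not; both types exist, and which type a cylinder has depends only on $\pi(\hat X^k_i)$. Let $(X^k_\flat,F_k)$ be the restricted system consisting of type (ii) cylinders only. The key lemma is that $P_G^\flat(\Psi^k) < P_G(\Psi^k) = 0$ strictly: using the conformal measure $m_{\Psi^k}$ and summable variations, one shows the contribution of type (ii) $n$-cylinders to $Z_n$ decays geometrically, since at each step a fixed proportion (measured by $m_{\Psi^k}(\mathcal Y^k)$) of the remaining mass escapes into type (i). Once this strict inequality is in hand, choose $\theta_k>0$ small enough that $P_G^\flat(\Psi^k+\theta_k\tau^k)\le 0$. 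Then any $\mu'$ with $h_{\mu'}(f)+\int\psi\,d\mu'>-\theta_k$ and compatible to $(X^k,F_k)$ has induced free energy strictly positive for $\Psi^k+\theta_k\tau^k$, so by the variational principle on the restricted system, $\mu'_{F_k}$ cannot live only on type (ii) cylinders; hence $\hat\mu'(\hat X^0)>0$. Taking $\theta=\min\{\eps_0,\theta_1,\dots,\theta_N\}$ closes the argument directly, with no sequence or compactness needed.

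Your concluding step is also problematic on its own terms. Even granting $\mu^{(k)}=\mu_\psi$, the Gibbs property of $\mu_\psi$ (or of $\mu_{\Psi^k}$) is a property of that single measure; it gives no lower bound on $\hat\mu_n(\hat X^0)$ for the \emph{other} measures $\mu_n$ in your sequence. Weak-$*$ convergence on the compact piece $\hat I_R$ would at best give $\liminf_n\hat\mu_n(U)\ge\hat\mu_\infty(U)$ for open $U$ \emph{if} a subsequence of $\hat\mu_n$ converged to some $\hat\mu_\infty$ with $\hat\mu_\infty(\hat X^0)>0$; but you have not shown that any weak limit of $\hat\mu_n$ is $\hat\mu_\psi$, and the free energy functional is not upper semicontinuous here, so near-optimality of $\mu_n$ does not force convergence to the equilibrium state. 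The paper's direct argument via the pressure gap avoids both of these issues.
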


The idea here is that information on the equilibrium state for $(X^0, F_0, \Psi^0)$ allows us to show that measures with enough free energy must cover a large portion of the Hofbauer tower, in particular they are compatible to $(X^0,F_0)$.

\begin{proof}
Let $k \in \{ 1, \dots, N\}$ be arbitrary and assume that $\mu' \in \M_+$
is a measure such that $\hat\mu'(\hat X^k)>0$, but with $\hat\mu'(\hat X^0)=0$.

Here we will refer to the components of $\pi^{-1}(X^k_i)\cap \hat X^k$
as \emph{$1$-cylinders of $(\hat X^k, R_{\hat X^k})$},
the first return map to $\hat X^k$.
\begin{claim}
\begin{itemize}
\item[(i)] There is at least one $1$-cylinder mapping into $\hat X^0$
before returning to $\hat X^k$;
\item[(ii)] There is at least one $1$-cylinder which does not map to $\hat X^0$ before returning to $\hat X^k$.
\end{itemize}
Moreover, whether (i) or (ii) holds depends only on $\pi(\hat X^k_i)$,
and not on the domain that $\hat X^k_i$ belongs to.
\end{claim}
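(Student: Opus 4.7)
My plan is to analyze each $1$-cylinder $\hat X^k_i$ of $R_{\hat X^k}$ via its $\hat f$-orbit in the Hofbauer tower: a Markov path $D_0 \to D_1 \to \cdots \to D_{\tau^k_i}$ with $\pi(D_0), \pi(D_{\tau^k_i}) \supset Y^k$ by construction of $\hat X^k$. The cylinder is of type (i) exactly when some intermediate domain $D_j$ satisfies $\pi(D_j) \supset Y^0$ and $f^j(x) \in X^0$ for $x \in \pi(\hat X^k_i)$, and of type (ii) otherwise. I will establish the ``moreover'' statement first, then the existence of the two types.

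\textbf{Moreover.} Let $\hat X^k_i, \hat X^k_j$ share a projection but sit in distinct domains $D_0, D_0' \in \D$, each with $\pi$-image $\supset Y^k$. Their lifted orbits project to the same $f$-orbit of every $x \in \pi(\hat X^k_i)$, so the times $j$ with $f^j(x) \in X^0$ coincide. The first-$\delta$-extendible-return condition forces $f^j$ to be monotone on $Y^k$ for all $j < \tau^k_i$, hence in the Hofbauer tower $\pi(D_j) \supset f^j(Y^k)$ and likewise $\pi(D_j') \supset f^j(Y^k)$. Since whether $f^j(Y^k) \supset Y^0$ is a property of $x$ alone, both $D_j$ and $D_j'$ satisfy $\pi \supset Y^0$ simultaneously. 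Therefore the visit pattern to $\hat X^0$, and hence the type, depends only on $\pi(\hat X^k_i)$.

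\textbf{Existence of both types.} For type (i): by the $Cover(\eps)$ property and the construction of $\hat E$, both $\hat X^0$ and $\hat X^k$ intersect the primitive transitive subgraph $\E$ of the Hofbauer tower (Remark~\ref{rmk:E full meas}, Lemma~\ref{lem:trans_subgraph}). Topological transitivity of $(\E, \hat f)$ provides an $\hat f$-orbit starting in $\hat X^k$, passing through $\hat X^0$, and returning to $\hat X^k$; by the Markov structure of $\hat f$ and the definition of $R_{\hat X^k}$, some $1$-cylinder of $R_{\hat X^k}$ realizes this passage without intermediate returns, so it is of type (i). For type (ii): under the assumption $\hat\mu'(\hat X^k) > 0$ and $\hat\mu'(\hat X^0) = 0$, the conditional measure $\hat\mu'_{\hat X^k}$ is $R_{\hat X^k}$-invariant and ergodic, and $\mu'$-a.e.\ orbit never enters $\hat X^0$; hence a positive-measure set of orbits returns to $\hat X^k$ without visiting $\hat X^0$, giving at least one $1$-cylinder of type (ii).

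\textbf{Main obstacle.} The subtlest step is the ``moreover'' statement, which requires careful bookkeeping on the Hofbauer tower to ensure that two lifted orbits from different starting domains (both projecting over $Y^k$) remain equally extended at every moment when they might enter $\hat X^0$. This rests on the construction of $\hat X^0$ and $\hat X^k$ as unions of \emph{all} sufficiently extended lifts, together with the intrinsic diffeomorphism-onto-$Y^k$ property of $\delta$-extendible first returns. Once this projection-invariance is secured, the existence parts reduce to routine applications of topological transitivity on $\E$ and ergodicity of the first-return measure $\hat\mu'_{\hat X^k}$.
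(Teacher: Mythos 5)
Your treatment of parts (i) and (ii) is sound and essentially parallels the paper's: (i) follows from topological transitivity of $(\E,\hat f)$, and (ii) follows because under the hypothesis $\hat\mu'(\hat X^0)=0$ we have $\hat\mu'\bigl(\bigcup_{n\ge 0}\hat f^{-n}(\hat X^0)\bigr)=0$, so $\hat\mu'$-a.e.\ orbit avoids $\hat X^0$ entirely, forcing the existence of a type-(ii) $1$-cylinder. (The paper runs (ii) by contraposition, but the content is the same.)

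The gap is in the ``moreover'' step. You write that the first-$\delta$-extendible-return condition ``forces $f^j$ to be monotone on $Y^k$ for all $j<\tau^k_i$,'' and deduce $\pi(D_j)\supset f^j(Y^k)$. This is not what the extendibility condition gives. Extendibility says $f^{\tau^k_i}$ maps a \emph{pullback} neighbourhood $Y^k_i\supset X^k_i$ monotonically onto $Y^k$; consequently $f^j$ is monotone on the small set $Y^k_i$ for $j\le\tau^k_i$, but there is no reason for $f^j$ to be monotone on the much larger fixed interval $Y^k$. In general $Y^k$ will meet critical preimages before time $\tau^k_i$, so $f^j(Y^k)$ is not even an interval, and the Hofbauer-tower domains $D_j$, $D_j'$ do shrink when the branch partition cuts them — they need not contain $f^j(Y^k)$. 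So the inclusion $\pi(D_j)\supset f^j(Y^k)$ can fail, and the projection-invariance of the ``visit pattern to $\hat X^0$'' does not follow from your argument as written.

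What the paper actually uses here is a nontrivial fact established in \cite{BrCMP} (and invoked via Lemma~\ref{lem:exten}): for sets of the form $\hat X=\sqcup\{D\cap\pi^{-1}(X):D\in\D,\ \pi(D)\supset Y\}$, the first-return time to $\hat X$ on the tower depends only on the base point $\pi(\hat x)$, and the analogous statement holds for entry into another such set $\hat X^0$. This is combined with the observation that $\hat X^k\in\bigcup_{j\ge n}\P_j$ consists of cylinder pieces, so that if one point of a $1$-cylinder maps into $\hat X^0$ then the whole cylinder does. Your scheme is attempting to reprove the BrCMP fact from scratch with an incorrect monotonicity claim; either cite it, or supply a genuine argument tracking how the tower domains $D_j$, $D_j'$ evolve under the Markov branching (which is more delicate than $f^j(Y^k)$-containment).
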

\begin{proof}  Property (i) follows by transitivity.
(A priori, sets $\hat X^k_i$ satisfying (i) may have
$\hat\mu'(\hat X^k_i) = 0$
or not; we will show that$\hat\mu'(\hat X^k_i) > 0$ for at least one such $\hat X^k_i$.)

For property (ii), suppose that for any first
return domain $\hat X_i^k\subset D\in \D$ there is
$0 \le s < r_{\hat X^k}(\hat X_i^k)$ \st
$\hat f^s(\hat X_i^k)\cap\hat X^0\neq \es$.  By the properties of
cylinders we must in fact have $\hat f^s(\hat X_i^k)\subset\hat X^0$.
This means that $\hat\mu'$-a.e. point enters $\hat X^0$ with positive frequency.  Ergodicity implies that $\hat\mu'(\hat X^0)>0$ which is a
contradiction. Hence (ii) holds.

Since $\hat X^k \in \cup_{j \ge n} \P_j$, if (i) holds for some
$1$-cylinder $\hat X^k_i$ of  $(\hat X^k, R_{\hat X^k})$,
say, then this whole cylinder maps into $\hat X^0$.
Moreover, by the proof of Lemma~\ref{lem:exten}, see \cite{BrCMP},
if $\hat y_1, \hat y_2 \in \hat X^k$ have $\pi(\hat y_1) = \pi(\hat y_2)$
and $\hat f^k(\hat y_1)\in \hat X^0$ then $\hat f^k(\hat y_2)\in \hat X^0$.
Consequently, for a $1$-cylinder $X_i^k$ of $(X^k,F_k)$ either every component
of $\pi^{-1}(X_i^k) \cap\hat X^k$ has property (i), or every component of
$\pi^{-1}(X_i^k) \cap\hat X^k$ has property (ii).
This concludes the proof of the first claim.
\end{proof}

Since, by the Gibbs property from Theorem~\ref{thm:BIP},
$\mu_\Psi$ gives all cylinders of
$(X^0,F_0)$ positive mass, the same must be true of the
$\hat\mu_\psi\circ\pi|_{\hat X^0}^{-1}$-measure of these cylinders.
Thus part (i) of the claim implies that $\hat\mu_\psi(\hat X^k)>0$
and hence $\mu_{\psi}$ is compatible to $(X^k, F_k)$.
By Case 3 of Proposition~\ref{prop:all_indu}, this also implies that
$P_G(\Psi^k)=0$.

Let $(X^k_{\flat}, F_k)$ denote the system minus the cylinders satisfying (i).
Let $P_G^{\flat}(\Psi^k)$ denote the Gurevich pressure of
$(X^k_{\flat}, F_k, \Psi^k)$, computed from $Z^{\flat}_n(\Psi^k)$,
which is defined in the natural way.
(Note that one consequence of part (ii) of the claim
is that $P^\flat_G(\Psi^k)>-\infty$.)

\begin{claim}
$P^\flat_G(\Psi^k)<P_G(\Psi^k)=0$.
\end{claim}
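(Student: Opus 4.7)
The plan is to exploit the Gibbs property of the unique equilibrium state $\mu_{\Psi^k}$ for $(X^k,F_k,\Psi^k)$ together with the BIP/mixing structure to produce exponential decay of the restricted partition function $Z^\flat_n(\Psi^k)$. By part~(i) of the previous claim combined with Case 3 of Proposition~\ref{prop:all_indu} applied to $(X^k,F_k,\Psi^k)$, we have $P_G(\Psi^k)=0$, and Theorem~\ref{thm:BIP} furnishes a unique invariant Gibbs measure $\mu_{\Psi^k}$. Let $R \subset X^k$ denote the union of the $1$-cylinders removed to form $X^k_\flat$ (the cylinders satisfying property~(i)). Since $R$ is nonempty and a union of $1$-cylinders, the Gibbs property gives $p := \mu_{\Psi^k}(R) > 0$.

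Next I would translate the counting in $Z^\flat_n$ into a measure estimate. Fix a $1$-cylinder $X^k_i \subset X^k_\flat$, which exists by part~(ii) of the previous claim. Using $P_G(\Psi^k)=0$, the Gibbs property gives $e^{(\Psi^k)_n(x)} \asymp_{dis} \mu_{\Psi^k}(C_n)$ for any $n$-cylinder $C_n$ of $(X^k, F_k)$ and any $x \in C_n$. Summing over the $n$-cylinders in $X^k_i$ whose first $n$ iterates under $F_k$ stay in $X^k_\flat$ and return to $X^k_i$ at time $n$ yields
\[
Z^\flat_n(\Psi^k, X^k_i) \asymp_{dis} \mu_{\Psi^k}(B_n),\qquad B_n := \{x \in X^k_i : F_k^j(x) \notin R,\ 0 \le j < n,\ F_k^n(x) \in X^k_i\}.
\]

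The heart of the argument is then to show $\mu_{\Psi^k}(B_n) \le C\rho^n$ for some $\rho<1$. By BIP there is a finite collection $X_1,\dots,X_M$ of distinguished $1$-cylinders such that every $1$-cylinder $Y$ admits a length-$\le N$ path to each $X_j$; topological transitivity of $F_k$ then allows a length-$\le N$ path from at least one $X_j$ into $R$. Using the Gibbs property with bounded distortion of $\Psi^k$ on cylinders, this translates into a uniform Doeblin-type lower bound
\[
\mu_{\Psi^k}\Bigl(Y \cap \bigcup_{j=0}^{2N} F_k^{-j}(R)\Bigr) \ge \eta \, \mu_{\Psi^k}(Y)
\]
with $\eta>0$ independent of $Y$. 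Iterating this estimate over disjoint time blocks of length $2N$ yields $\mu_{\Psi^k}(B_n) \le (1-\eta)^{\lfloor n/(2N) \rfloor}$, and combined with the previous step gives
\[
P^\flat_G(\Psi^k) = \limsup_{n\to\infty} \tfrac{1}{n}\log Z^\flat_n(\Psi^k, X^k_i) \le \tfrac{\log(1-\eta)}{2N} < 0.
\]

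The main obstacle is the uniformity of the Doeblin-type estimate across the countably many $1$-cylinders $Y$. Such uniformity is not automatic on a countable alphabet; it is precisely the BIP property that reduces the problem to a finite combinatorial statement about the distinguished cylinders $X_1,\dots,X_M$, and bounded distortion of $\Psi^k$ lets one convert topological accessibility into a genuine Gibbs-mass lower bound. Once this uniform estimate is in place, the exponential decay of $\mu_{\Psi^k}(B_n)$, and hence the strict drop $P^\flat_G(\Psi^k)<0$, follows routinely.
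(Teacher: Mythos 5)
Your proposal is correct in substance but takes a more elaborate route than necessary, because you do not exploit the fact that the inducing scheme is full-branched. By definition, $F_k$ maps each $X^k_i$ diffeomorphically onto all of $X^k$, so every $j$-cylinder $\c^k_j$ in the $\flat$-sum already has $F_k^{j}(\c^k_j)=X^k\supset\mathcal Y^k$; hence $\c^k_j$ contains $(j+1)$-subcylinders mapping onto $\mathcal Y^k$ at time $j$, and by conformality of $m_{\Psi^k}$ and summable variations these subcylinders carry at least a fixed proportion $\xi = m_{\Psi^k}(\mathcal Y^k)/K$ of $m_{\Psi^k}(\c^k_j)$. This gives one-step exponential decay of $Z^\flat_n$ immediately: $Z^\flat_{n}\le e^{2\sum_jV_j(\Psi^k)}(1-\xi)^n Z^\flat_1$, hence $P^\flat_G(\Psi^k)\le\log(1-\xi)<0$. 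The BIP/transitivity machinery you invoke to reach $R$ in $\le N$ steps, and the resulting uniform Doeblin bound over blocks of length $2N$, is exactly the point you flag as the main obstacle, yet it is trivial here with $N=1$: every $1$-cylinder already maps onto $R$ in a single step, so the uniformity you worry about is automatic. Your translation of $Z^\flat_n$ into $\mu_{\Psi^k}(B_n)$ via the Gibbs property (using $P_G(\Psi^k)=0$) is a fine alternative to the paper's use of the conformal measure $m_{\Psi^k}$ --- the two differ only by the bounded density $\rho_{\Psi^k}$. So your argument works, and would even extend to BIP Markov shifts that are not full-branched, but for the system at hand the paper's direct one-step contraction is shorter and avoids the block-iteration bookkeeping.
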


\begin{proof}
Let $\mathcal{Y}^k$ be the union of $1$-cylinders of $(X^k, F_k)$
whose representatives in $\hat X^k$ satisfy property (i).
We fix a $1$-cylinder $Y^k$ so that $Y^k \cap \mathcal{Y}^k=\es$,
i.e., its representatives in $\hat X^k$ satisfy (ii).
In each $\c^k_j\subset Y^k$ there exists a unique periodic point which
contributes to $Z_j(\Psi^k,  Y^k)$.
Thus noting that
$m_{\Psi^k}(\c^k_j) = \int_{\c^k_j} e^{-\Psi^k(x)} d\mu_{\Psi^k}$
and using the variation properties of $\Psi^k_j$, we derive
$$
e^{-V_j(\Psi^k)} \sum m_{\Psi^k}(\c^k_j) \le
Z_j(\Psi^k,  Y^k) \le e^{V_j(\Psi^k)}
\sum m_{\Psi^k}(\c^k_j)
$$
where the sum is taken over all $j$-cylinders $\c^k_j$ in $Y^k$.
Similarly
$$
e^{-V_j(\Psi^k)} \sideset{}{{}^\flat}\sum m_{\Psi^k}(\c^k_j) \le
Z^\flat_j(\Psi^k,  Y^k)
\le
e^{V_j(\Psi^k)} \sideset{}{{}^\flat}\sum m_{\Psi^k}(\c^k_j)
$$
where the sum $\sum^\flat$ is taken over all $j$-cylinders $\c^k_j$
in $Y^k$ so that $F_k^s(\c^k_j)\cap \mathcal{Y}^k =\es$ for $0\le s \le j-1$.

For every $\c^k_j$ in the sum $\sum^\flat m_{\Psi'}(\c^k_j)$ there exist
collection of $j+1$-cylinders $\c^k_{j+1}$ so that
$F_k^{j}(\cup \c^k_{j+1}) = \mathcal{Y}^k$.
Since $m_{\Psi^k}$ is conformal and $\Psi^k$ has summable variations,
we have
$$
\frac{m_{\Psi^k}(\cup \c^k_{j+1})}{m_{\Psi^k}(\c^k_j)} \ge
\frac1K \left(\frac{m_{\Psi^k}(\mathcal{Y}^k)}{m_{\Psi^k}(X^k)}\right)
$$
where $K=e^{\sum_jV_j(\Psi^k)}$.
Hence, since $m_{\Psi^k}(X^k)=1$,
\begin{eqnarray*}
\sideset{}{{}^\flat}\sum m_{\Psi^k}(\cup \c^k_{j+1})
&=& \sideset{}{{}^\flat} \sum (m_{\Psi^k}(\c^k_j) - m_{\Psi^k}(\cup \c^k_{j+1}))\\
&\le&
\left(1-\frac{m_{\Psi^k}(\mathcal{Y}^k)}{K}\right)
\sideset{}{^\flat}\sum m_{\Psi^k}(\c^k_j).
\end{eqnarray*}
Letting $\xi:=\frac{\mu_{\Psi^k}(\mathcal{Y}^k)}{K}$ we have
$$
Z^\flat_{j+1}(\Psi^k,  Y^k) \le e^{V_{j+1}(\Psi^k)}
\sideset{}{{}^\flat}\sum\mu_{\Psi^k}(\c^k_j)
\le e^{V_{j+1}(\Psi^k)+V_j(\Psi^k)}(1-\xi)Z^\flat_j(\Psi^k,  Y^k).
$$
Therefore $Z^\flat_{n}(\Psi^k,  Y^k) \le e^{2\sum_jV_j(\Psi^k)} (1-\xi)^n
Z^\flat_{1}(\Psi^k,  Y^k)$.
Since Lemma~\ref{lem:sum var} implies
$\sum_jV_j(\Psi^k)<\infty$, we have $P^\flat_{G}(\Psi^k)<\log(1-\xi)<0$,
as required.
This completes the proof of the second claim.
\end{proof}

Now take $\theta_k > 0$ 
so that $P^\flat_{G}(\Psi^k+\theta_k\tau^k)\le 0$.
If the measure $\mu'$ from the beginning of the proof
satisfies $h_{\mu'}(f) + \int \psi d\mu' > -\theta_k$, then
$h_{\mu'}(f) + \int (\psi + \theta_k) d\mu' > 0$, so
Lemma~\ref{lem:Abra} implies that the corresponding induced measure
$\mu'_{F_k}$ has
$h_{\mu'_{F_k}}(F_k)+\int(\Psi^k+\theta_k\tau^k)~d\mu'_{F_k}>0$.
From the Variational Principle for the system
$(X^k_\flat, F_k, \Psi^k+\theta_k\tau^k)$
we see that $\mu'_{F_k}$ cannot be supported on type (ii) $1$-cylinders of
$(X^k, F_k)$ only. Hence $\hat\mu'(\hat X^0) > 0$.

Finally take $\theta := \min\{ \eps_0, \theta_1, \dots, \theta_N\}$
and let $\mu$ be such that
$h_{\mu}(f) + \int \psi d\mu > -\theta$. Since $\theta \le \eps_0$, we have
$h_{\mu}(f) > \eps$ by assumption,
and therefore $\mu$ is compatible to $(X^k, F_k)$ for some
$k \in \{0, 1, \dots, N\}$.
By the choice of $\theta$ and the
argument of the previous paragraph, it follows that $\hat\mu(\hat X^0) > 0$ as required.
\end{proof}

\section{Proofs of Theorem~\ref{thm:poly} and \ref{thm:CE}}
\label{sec:poly}

Let $\phi = \phi_t = -t \log |Df|$, and $\Phi$
be the corresponding
induced potential.  Przytycki \cite{Prz} proves that a measure $\mu\in \M$ is either supported on an attracting periodic orbit or $0\le \int\log|Df|~d\mu<\infty$.  So when we apply Lemma~\ref{lem:Abra} to this potential, we will get finite integrals for both the measure on $I$ and for the measure on the inducing scheme with the induced potential.

\begin{lemma} Assume that $f$ has negative Schwarzian derivative.
For inducing schemes obtained in Section~\ref{sec:induce},
the induced potential has summable variations. \label{lem:sum var}
\end{lemma}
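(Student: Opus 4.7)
The plan is to establish the stronger fact that $\Phi$ is weakly H\"older continuous, i.e., $V_n(\Phi)=O(\rho^n)$ for some $\rho\in(0,1)$, from which summable variations follows immediately. Fix an $n$-cylinder $c_n$ of $(X,F)$; it is necessarily contained in a single $1$-cylinder $X_i$, and $F|_{X_i}=f^{\tau_i}$ is a diffeomorphism onto $X$, so $F(c_n)$ is an $(n-1)$-cylinder of $F$ containing $F(x)$ and $F(y)$ for any $x,y\in c_n$. Since $\Phi=-t\log|DF|$ on $X_i$,
\[
|\Phi(x)-\Phi(y)|=t\left|\log\frac{|DF(x)|}{|DF(y)|}\right|.
\]
The strategy is (i) to bound this by a constant multiple of $|F(c_n)|/|X|$ via a fine Koebe estimate, and (ii) to show that $|F(c_n)|$ decays exponentially in $n$.

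For (i), recall from Section~\ref{sec:induce} that every branch $F\colon X_i\to X$ extends to a diffeomorphism $X_i'\to X'$, where $X'$ is a $\delta$-scaled neighborhood of $X$ with $\delta>0$ fixed by the construction. Since $f$ has negative Schwarzian derivative, the Koebe principle applied to the inverse branch $g=(F|_{X_i'})^{-1}\colon X'\to X_i'$ yields
\[
\left|\log\frac{|Dg(u)|}{|Dg(v)|}\right|\le K_1\,\frac{|u-v|}{|X|},\qquad u,v\in X,
\]
with $K_1=K_1(\delta)$ independent of $i$ and $n$. Setting $u=F(x)$, $v=F(y)$ and using $Dg(F(x))=1/DF(x)$ gives
\[
|\Phi(x)-\Phi(y)|\le tK_1\,\frac{|F(c_n)|}{|X|}.
\]

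For (ii), I would invoke that some iterate $F^m$ is uniformly expanding; this is already used in the proof of Theorem~\ref{thm:lyap lift}, and is a standard consequence of negative Schwarzian derivative together with the full-branched Markov structure and uniform Koebe distortion of $F$. Combined with the uniformly bounded distortion of $F^{n-1}\colon F(c_n)\to X$, this gives $|F(c_n)|\le K_2\rho^{n-1}|X|$ for some constants $K_2>0$, $\rho\in(0,1)$ independent of $n$ and $c_n$. Combining with (i) yields $V_n(\Phi)\le tK_1K_2\rho^{n-1}$, which is summable (and in fact demonstrates weak H\"older continuity of $\Phi$).

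The only non-routine step is the exponential shrinkage of the cylinders in (ii); the Koebe estimate in (i) is entirely standard. The uniform expansion of some $F^m$ is also standard, but it is the one place where the structural features (full-branched Markov, uniform $\delta$-extendibility) built into the inducing schemes of Section~\ref{sec:induce} really enter. Once these are in hand, the passage from $V_n(\Phi)=O(\rho^n)$ to $\sum_n V_n(\Phi)<\infty$ is just a geometric series.
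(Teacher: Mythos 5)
Your proposal is correct and rests on the same two pillars as the paper's proof: Koebe distortion control (using the uniform $\delta$-scaled extendibility built into the inducing schemes of Section~\ref{sec:induce}) together with uniform expansion of some iterate $F^N$. The paper organizes the estimate slightly differently: it first records the crude bounded-distortion bound $|\Phi(x)-\Phi(y)|\le |t|\,(1+2\delta)/\delta^2$, then passes to $G=F^N$ with $\inf|DF^N|>\gamma>1/\delta$ and argues that the Koebe space available at the $n$-th cylinder level improves geometrically, giving $V_n(\Phi_N)=O(\gamma^{-n})$. You instead use the sharper (linear-in-distance) form of the Koebe estimate at the single-branch level, reducing everything to exponential shrinkage of $|F(c_n)|$, which you extract from the same uniform expansion plus bounded distortion of $F^{n-1}\colon F(c_n)\to X$. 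The two routes are equivalent in substance and both yield weak H\"older continuity of $\Phi$ rather than just summability; your version arguably makes the dependence on cylinder size more transparent and sidesteps the need to track improving scaled neighbourhoods. One small bookkeeping point worth making explicit in (ii): to pass from $\inf|DF^N|>\gamma$ to a bound on $|DF^{n-1}|$ for arbitrary $n$, you also use that $\inf_{\cup_i X_i}|DF|$ is bounded below, which follows from $X_i\subset X$, $F(X_i)=X$ and the uniform Koebe distortion — this is implicit in your phrase ``uniformly bounded distortion'' but deserves a sentence.
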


\begin{proof}
In general, $\phi$ has unbounded variations. However, we note that
inducing schemes as in Theorem~\ref{thm:lyap lift} and Lemma~\ref{lem:exten} are maps  $F:\bigcup_j X_j \to X$ with uniform Koebe space $\delta$.
Since $\phi$ is in general unbounded, it will not have bounded
variations, but we only need to check that the induced potential
$\Phi$ has bounded variations. By the Koebe Lemma,
$\frac{|DF(y)|}{|DF(x)|} <\frac{1+2\delta}{\delta^2}+1$. Therefore
\begin{align*} |\Phi(x)-\Phi(y)| & = |t| \ \bigg|-\log|DF(x)|+
\log|DF(y)|\bigg| =
|t| \ \left|\log\left(\frac{|DF(y)|}{|DF(x)|}\right)\right|\\
& \le |t| \ \log\left(1+\frac{1+2\delta}{\delta^2} \right)< |t|
\left(\frac{1+2\delta}{\delta^2}\right).
\end{align*}
By standard arguments, for any $\gamma>1$ there exists $N=N(\gamma)$
such that we have $\inf_{x\in X}|DF^N(x)|>\gamma$ (here we use the negative
Schwarzian assumption; alternatively a $C^3$ assumption and the
absence of neutral periodic cycles would suffice).  Moreover, $F^N$
satisfies the above distortion estimates.  Let $\gamma>\frac1\delta$
and let $G:\bigcup_j Y_j\to X$ be given by $G:=F^{N}$ for
$N=N(\gamma)$.  Clearly, proving the lemma for $\Phi_N$ is sufficient.

We have that $X$ is a $\gamma\delta$-scaled neighbourhood of
$Y_j$ for any $j$.  Using the Koebe Lemma again for $x,y$ in the same
connected component of $G^{-1}(Y_j)$, we have
\[
|\Phi_N(x)-\Phi_N(y)|< |t| \ \left(\frac{1+2\gamma\delta}
{(\gamma\delta)^2}\right).
\]
Repeating this argument for $x,y$ in the same connected component
of $G^{-n}(Y_j)$ that
\[
|\Phi_N(x)-\Phi_N(y)|< |t| \
\left(\frac{1+2\gamma^n\delta}{(\gamma^n\delta)^2}\right) =
|t| O(\gamma^{-n}).
\]
Thus $\Phi_N$, and hence $\Phi$, has summable variations.
\end{proof}

The proofs of Theorems~\ref{thm:poly} and \ref{thm:CE} have
roughly the same structure. We start with the Collet-Eckmann
case, leaving the additional details for the summable case to the
end of the section. For use in both proofs, we define
\[
Z_0(\Phi):=\sum_{F(x)=x} e^{\Phi(x)}.
\label{page:Z_0}
\]
As stated in the proof of
Proposition~\ref{prop:all_indu}, we have $Z_n(\Phi)=O(\|\L_\Phi 1
\|_\infty^n)$. In this case, bounded distortion gives $\|\L_\Phi 1
\|_\infty \asymp_{dis} Z_0(\Phi)$. Thus $Z_n(\Phi) =O([Z_0(\Phi)]^n)$.

We are now ready to prove Theorem~\ref{thm:CE}, although we
postpone the proof that $t\mapsto P(\phi_t)$ is analytic to the
end of Section~\ref{sec:tails}.

\begin{proof}[Proof of the first part of Theorem~\ref{thm:CE}]
We choose $X$ as in Corollary~\ref{cor:uniform X} and apply the method of
Lemma~\ref{lem:exten} to get an extendible inducing scheme $(X,F)$.

Fixing $t$, we define $\psi_S = \phi_t-S$, and let $\Psi_S$ be the
induced potential.  The natural candidate for $S$ is $P(\phi_t)$,
but we will want to consider a more general value for this shift
in the potential in order for (c) of
Proposition~\ref{prop:all_indu} to hold.

We continue by showing that the induced system has bounded Gurevich
pressure and (a) and (c) of Proposition~\ref{prop:all_indu} hold.
As above, $Z_n(\Phi) =O(Z_0^n(\Phi))$. Therefore it suffices to
show that $Z_0(\Phi_S)<\infty$ to conclude that $P_G(\Psi_S)<\infty$.

We wish to count the number of domains $X_i$ with $\tau_i = n$.  The number of \emph{laps} of a piecewise continuous function $g$ is the number of maximal intervals on which
$g$ is monotone.  We denote this number by $\laps(g)$.  By \cite{MSz}, one characterisation of the topological entropy is
$\htop(f) := \lim_{n\to \infty} \frac1n \log
\laps(f^n)$.  Therefore, for all $\eps>0$ there exists $C_\eps>0$ \st
\[
 \#\{ \tau_i = n \} \le \laps(f^n) \le C_\eps e^{n (\htop(f)+\eps)}
\]
for each $n$, where $\htop(f)$ denotes the topological entropy of $f$. Since $f$ is Collet-Eckmann, the tail behaviour of the inducing scheme is exponential.  This was shown for certain inducing schemes in \cite{BLS}.  We show in the proof of Proposition~\ref{prop:summable} that the results on the inducing schemes of \cite{BLS} hold for the inducing schemes of Lemma~\ref{lem:exten}.  We also show there how \cite{BRSS} allows us to strengthen
the results of \cite{BLS} to apply to maps with different critical orders, see Lemma~\ref{lem:BCC} below.

For $t\le 1$ we get
\begin{align*}
Z_0(\Psi_S) &:= \sum_{F(x) = x} e^{\Psi_S(x)} = \sum_{i, x = F(x)
\in X_i} e^{\Phi_t(x) - \tau_i(x)S }\\
&
\asymp_{dis}  \sum_i |X_i|^{t} e^{-\tau_i(x)S }
= \sum_n \sum_{\tau_i = n} |X_i|^{t} e^{-nS}  && \mbox{by the Koebe Lemma} \\
&\le \sum_n \left( \sum_{\tau_i = n} |X_i| \right)^t e^{-nS}
\left(\#\{ \tau_i = n \}\right)^{1-t}
&& \mbox{by the H\"older inequality} \\
&\le C_\eps\sum_n e^{-\alpha n t} e^{-nS} e^{n (\htop(f)+\eps)(1-t)} < \infty
&& \mbox{using tail behaviour}
\end{align*}
provided $t$ is sufficiently close to 1 and $S > \htop(f)(1-t) -
\alpha t$. A similar estimate gives
\begin{equation}
\sum_i \tau_i e^{\Psi_S(x)} \asymp_{dis} \sum_i \tau_i |X_i|^t
e^{-\tau_i S} < \infty. \label{eq:int}
\end{equation}

For $t\ge 1$
\begin{align*}
Z_0(\Psi_S) &\asymp_{dis}  \sum_n \sum_{\tau_i = n} |X_i|^{t} e^{-nS}
\le  \sum_n e^{-nS} \left( \sum_{\tau_i = n} |X_i| \right)^t
\\
&\le \sum_n e^{-\alpha n t} e^{-nS} < \infty,
\end{align*}
provided $S > -\alpha t$. Similarly we can show
\[
\sum_i \tau_i e^{\Psi_S(x)} \asymp_{dis} \sum_i \tau_i |X_i|^t
e^{-\tau_i S} < \infty,
\]
provided $S > -\alpha t$.  When $t$ is sufficiently close to $1$,
$P(\phi_t)$ is close to 0, and thus if $S$ is close to $P(\phi_t)$ then the above sums are bounded.

Observe that the above estimates prove that condition (a) of
Proposition~\ref{prop:all_indu} holds.  For part (c) of that
proposition, the estimates above prove that
$P(\Psi_{P(\phi_t)+\eps})<\infty$ for $\eps<0$ close to $0$.
Therefore, Corollary~\ref{cor:uniform X} shows that (c) is be satisfied.
Therefore this inducing scheme gives rise to an equilibrium state $\mu_\phi=\mu_\psi$.  Moreover, from the proof of Proposition~\ref{prop:all_indu}, $P_G(\Psi)=0$.

It remains to show the uniqueness of the equilibrium state in
$\M_+$, since up to this point we only know that $\mu_\phi$ is the
unique equilibrium state whose lift to the Hofbauer tower gives $\hat X$ positive mass.  This follows from the next lemma.

\begin{lemma}
If $\mu_\phi$ is an equilibrium state, as above, compatible to an inducing scheme $(X,F)$ then it is also is compatible to any other inducing scheme $(X',F')$ provided $\hat X'\cap\E\neq \es$.  Here we assume that the inducing schemes are either both as in Theorem~\ref{thm:lyap lift} or both as in Lemma~\ref{lem:exten}. \label{lem:eq sees all}
\end{lemma}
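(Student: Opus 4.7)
The plan is to propagate the Gibbs positivity of $\mu_F$ along paths in the graph $\E$ to show that $\hat\mu_\phi(\hat X')>0$, and then to deduce finiteness of $\int\tau'\,d\mu_{F'}$ via Kac's Lemma. Since $\mu_\phi$ is compatible to $(X,F)$, its lift $\hat\mu_\phi$ is an ergodic $\hat f$-invariant probability measure on $\hat I$ with $\hat\mu_\phi(\hat X)>0$, and Remark~\ref{rmk:E full meas} gives $\hat\mu_\phi(\E)=1$. The conditional $\hat\mu_{\hat X}$ corresponds under $\pi$ to the Gibbs equilibrium state $\mu_F=\mu_\Psi$ furnished by Theorem~\ref{thm:BIP}, so every $(\hat X,\hat F)$-cylinder carries positive $\hat\mu_\phi$-mass.

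Now pick any $\hat y\in\hat X'\cap\E$, and let $D_0,D_{\hat y}\in\E$ be the domains with $\hat X\subset D_0$ and $\hat y\in D_{\hat y}$. Primitivity of $\E$ (Lemma~\ref{lem:trans_subgraph}) furnishes a directed path $D_0\to D_1\to\cdots\to D_n=D_{\hat y}$ in the Hofbauer graph, which corresponds to an $\hat f$-cylinder $\hat A\subset D_0$ of order $n$ with $\hat f^n(\hat A)\subset D_{\hat y}$. Intersecting with $\pi^{-1}(X)$ and then refining inside $D_{\hat y}$, I may assume $\hat A\subset\hat X$ and $\hat f^n(\hat A)\subset\hat X'$.

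To see that $\hat\mu_\phi(\hat A)>0$: the set $\pi(\hat A)$ is a non-empty open subset of $X$, so by the density argument in the proof of Theorem~\ref{thm:lyap lift} it contains a first-return cylinder $X_i$; a further refinement produces an $(\hat X,\hat F)$-cylinder of some finite order sitting inside $\hat A$, and the Gibbs property forces this sub-cylinder to have positive $\hat\mu_\phi$-mass. Hence $\hat\mu_\phi(\hat A)>0$, and since $\hat f^n|_{\hat A}$ is injective and $\hat\mu_\phi$ is $\hat f$-invariant,
\[
\hat\mu_\phi(\hat X')\;\ge\;\hat\mu_\phi\bigl(\hat f^n(\hat A)\bigr)\;\ge\;\hat\mu_\phi(\hat A)\;>\;0.
\]

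Finally, in both constructions under consideration the map $\hat F'$ on $\hat X'$ is the first return of $\hat f$ to $\hat X'$, so Kac's Lemma yields $\int\hat\tau'\,d\hat\mu_{\hat X'}=1/\hat\mu_\phi(\hat X')<\infty$, and this transfers through $\pi$ to $\int\tau'\,d\mu_{F'}<\infty$. Combined with $\mu_\phi(X'\sm {X'}^\infty)=0$ (another application of Poincar\'e recurrence) this is precisely the compatibility of $\mu_\phi$ with $(X',F')$. I expect the most delicate step to be the cylinder refinement in the middle two paragraphs: one must simultaneously force $\hat A\subset\hat X$, $\hat f^n(\hat A)\subset\hat X'$, and that $\hat A$ contain an $(\hat X,\hat F)$-cylinder so that the Gibbs lower bound applies. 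The iterative Markov refinement used in the proof of Theorem~\ref{thm:lyap lift} handles this, provided one keeps track of which domain of $\D$ each refinement sits in so as never to leave $\E$.
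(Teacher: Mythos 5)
Your proof is correct and takes essentially the same approach as the paper's: both arguments use the transitivity/primitivity of $(\E,\hat f)$ to find an $n$-cylinder $\hat A\subset\hat X$ with $\hat f^n(\hat A)\subset\hat X'$, invoke the Gibbs property of $\mu_\Psi$ to get $\hat\mu_\phi(\hat A)>0$, and then push forward by $\hat f$-invariance to obtain $\hat\mu_\phi(\hat X')>0$, with the Kac/Poincar\'e step completing compatibility exactly as in the proof of Theorem~\ref{thm:lyap lift}.
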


\begin{proof}  We will assume that the inducing schemes here are all as in Lemma~\ref{lem:exten}, since this is the more difficult case. Let $(\hat X, \hat F)$ be the inducing scheme used above.  The proof follows if we can show that $\hat\mu_\phi(\hat X')>0$.

Transitivity of $(\E,\hat f)$ implies that there exists $n\ge 0$ so that $\hat f^{-n}(\hat X')\cap\hat X$ contains an open set.  As in Proposition~\ref{prop:unif scheme}, since $\mu_\Psi$ gives positive mass to cylinders, this implies that there exists $\hat U\subset\hat X$ so that $\hat\mu_\phi(\hat U)>0$
and $\hat f^n(\hat U)\subset \hat X'$.
Hence,
$$
\hat\mu_\phi(\hat X') \ge \hat\mu_\phi(\hat f^n(\hat U))\ge \hat\mu_\phi(\hat U)>0.
$$
Therefore, $\mu_\phi$ is compatible to $(X',F')$.
\end{proof}

Suppose that $\mu\in \M_+$ is an equilibrium state.  By the ideas of
Lemma~\ref{lem:exten} there must exist a first extendible inducing scheme
$(X',F',\Psi')$ which is compatible to $\mu$ and which corresponds
to a first return map to a set $\hat X'$  on the Hofbauer tower.
Lemma~\ref{lem:eq sees all} implies that $\mu_\phi$ is compatible to $(X',F')$ and hence $\mu=\mu_\phi$ by the uniqueness of equilibrium states on an inducing scheme.
\end{proof}

To do the summable case, we adapt techniques from \cite{BLS}.
In that paper, the Bounded Backward Contraction is used for arbitrary
neighbourhoods of the critical set, which at the time
was only known to hold when all critical orders $\ell_c$ are the same.
Using results from \cite{BRSS}, and specifying the neighbourhoods $U$,
we can improve this in the following lemma.

\begin{lemma}\label{lem:BCC}
Let $f \in \H$ be a multimodal map with negative Schwarzian derivative
such that $\lim_{n \to \infty} |Df^n(f(c))| = \infty$ for each $c \in \Crit$.
Then for any $\eps > 0$ and $\lambda > 1$, we can find critical neighbourhoods
$U := f^{-1}(B_\eps(f(\Crit)))$
that are \emph{$\lambda$-nice} in the sense that
\begin{itemize}
\item $f^n(\partial U) \cap U = \emptyset$ for all $n \ge 0$;
\item if $V \subset U$ is the domain of the first return map to $U$,
then the interval $V'$ concentric to $V$ and of length $(1+2\lambda)|V|$
is contained in $U$.
\end{itemize}
Moreover, there exists $b > 0$ such that
\begin{equation}\label{eq:BCC}
|Df^r(x)| \ge b \text{ for all } x \in I \text{ and }
r = \min\{ n \ge 0 : f^n(x) \in U \},
\end{equation}
where the $\lambda$-nice critical neighbourhood $U$ can be chosen
arbitrarily small.
\end{lemma}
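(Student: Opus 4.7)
The plan is to build the desired critical neighbourhood $U$ in three stages, following the scheme of \cite{BLS} but replacing its equal-critical-order backward contraction by the mixed-order version available from \cite{BRSS}. For stage one, consider the family $U_\eps := f^{-1}(B_\eps(f(\Crit)))$. Its boundary $\bd U_\eps$ is a finite set of preimages of the endpoints $f(c) \pm \eps$, $c \in \Crit$. The hypothesis $|Df^n(f(c))| \to \infty$ forces these boundary points to escape any fixed small neighbourhood of $\Crit$ under iteration, and a standard parameter-selection argument (one bad $\eps$-value per orbit relation) shows that the set of $\eps \in (0,\eps_0)$ for which some $f^n(\bd U_\eps)$ lies in $U_\eps$ is at most countable. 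A generic $\eps$ in any prescribed small window therefore yields $f^n(\bd U_\eps) \cap U_\eps = \es$ for all $n \ge 0$.

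Stage two is the heart of the argument. Fix $\lambda > 1$. If $V \subset U$ is a domain of the first return map to $U$, then $V$ lies in the connected component $U^c$ around some $c \in \Crit$, and the required $\lambda$-niceness amounts to the ratio $|V|/|U^c|$ being uniformly small for all first-return domains $V$ and all $c \in \Crit$. This is the \emph{backward contraction} property: one pulls $U$ back along the return branch using the Koebe principle and checks that the pulled-back intervals shrink relative to their parent component. In the equal-order setting this is proved in \cite{BLS}; in the mixed-order setting needed here, the same pullback argument goes through once one feeds in the preimage decay estimates of \cite{BRSS}, which handle different $\ell_c$. A minor piece of bookkeeping is to transfer those estimates (typically stated for balls around $\Crit$) to the specific neighbourhoods $U_\eps = f^{-1}(B_\eps(f(\Crit)))$, but this is routine since $f$ is a local diffeomorphism away from $\Crit$.

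Stage three extracts the uniform lower bound \eqref{eq:BCC}. The set $K := I \sm U$ is compact and disjoint from $\Crit$, so $b_0 := \inf_{x \in K} |Df(x)| > 0$. The hypothesis $|Df^n(f(c))| \to \infty$ for every $c \in \Crit$ rules out attracting and neutral periodic cycles, since either would trap some critical orbit and keep its derivative bounded. Ma\~n\'e's hyperbolicity theorem (\cite[III.5]{MSbook}, and \cite{SVarg} in the multimodal case) then supplies constants $C_0 > 0$ and $\Lambda > 1$ with
\begin{equation*}
|Df^n(x)| \ge C_0 \Lambda^n \quad \text{whenever } x, f(x), \dots, f^{n-1}(x) \in K.
\end{equation*}
For $x$ with first entry time $r \ge 1$ to $U$, the segment $x, \dots, f^{r-1}(x)$ lies in $K$, and $f^{r-1}(x) \in K$ also gives $|Df(f^{r-1}(x))| \ge b_0$; hence
\begin{equation*}
|Df^r(x)| = |Df^{r-1}(x)| \cdot |Df(f^{r-1}(x))| \ge C_0 \Lambda^{r-1} b_0 \ge C_0 b_0/\Lambda.
\end{equation*}
The case $r = 0$ is trivial, so setting $b := \min\{1, C_0 b_0/\Lambda\}$ proves \eqref{eq:BCC}. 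Since $\eps$ may be prescribed arbitrarily small at every stage while preserving both niceness and $\lambda$-niceness, the assertion that $U$ can be taken arbitrarily small is immediate.

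The main obstacle is precisely stage two: the equal-order hypothesis in \cite{BLS} was what previously prevented direct use of their pullback argument, and the content of invoking \cite{BRSS} is to supply the missing mixed-order preimage decay. Stages one and three are essentially standard bookkeeping on top of classical parameter-selection and Ma\~n\'e-type results.
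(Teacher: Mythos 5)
Your stages one and two are in the right spirit, but note that the paper simply invokes \cite[Lemma~3]{BRSS} for both niceness and $\lambda$-niceness rather than re-proving them; you could do the same. The real problem is in stage three.

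The bound you derive is $b = \min\{1, C_0 b_0/\Lambda\}$ with $b_0 := \inf_{x \in I \setminus U}|Df(x)|$, and both $b_0$ and the Ma\~n\'e constants $C_0,\Lambda$ are functions of $U$. In particular $b_0 \to 0$ as $U$ shrinks to $\Crit$, since $I\setminus U$ then contains points arbitrarily close to the critical set. So your argument only produces a bound $b(U) > 0$ \emph{for each fixed} $U$, with $b(U)\to 0$ as $U$ shrinks. That is not what the lemma needs: the point of the clause ``where the $\lambda$-nice critical neighbourhood $U$ can be chosen arbitrarily small'' is that a \emph{single} $b$ works along a sequence of $\lambda$-nice neighbourhoods shrinking to $\Crit$, and this uniformity is essential later — it is what makes the constant $\zeta$ in \eqref{eq:xxx} independent of how small $U$ is eventually taken in Proposition~\ref{prop:summable}. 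The paper obtains the uniformity through a nested construction that your argument omits: fix a $\lambda$-nice $U_1$, establish $b=b(U_1)>0$ once and for all (here your Ma\~n\'e-plus-boundary-derivative reasoning is exactly the right tool, applied to the \emph{fixed} $U_1$), then set $U_{i} := \tilde U_{i-1}$ (central return domains) and take $U=U_p$ as small as desired. Since the first return maps to each $\lambda$-nice $U_i$ are uniformly expanding once $\lambda$ is large, the iterates beyond the first entry $r_1$ to $U_1$ only increase the derivative, giving $|Df^r(x)| \ge |Df^{r_1}(x)| \ge b(U_1)$ for the first entry time $r$ to $U_p$. Without this nesting-and-expansion step, the uniform bound \eqref{eq:BCC} is not established.
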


\begin{proof}
The first part follows immediately from \cite{BRSS} which considers
$C^3$ non-flat
multimodal maps. Our assumption that $f$ is $C^2$ with
 negative Schwarzian derivative actually gives a slightly stronger version
of the Koebe distortion theorem, and hence is sufficient to claim the results
from \cite{BRSS}.
Lemma 3 in \cite{BRSS} shows the existence of $\lambda$-nice
neighbourhoods $U$ of $\Crit$. Denote the connected components of $U$
by $U^c$, $c \in \Crit$.
If $r = r(x) \ge 0$ is the first entrance time of $x$ to $U$,
then the niceness
of $U$ guarantees that there exists an interval $J_x$ so that $f^r$ maps
$J$ diffeomorphically onto $U^c$ for some $c \in \Crit$.
If $f^r(x)$ belongs to first return domain $V$, then there is $J_V \subset J$
such that $f^r:J_V \to V$ is monotone with distortion bound depending only on
$\lambda$.
A special case of this is when $V := \tilde U^c$ is the central return
domain in $U^c$.
Let $\tilde U = \cup_{c \in \sCrit} \tilde U^c$. In this case, the first entrance
time $\tilde r \ge 0$ of any
$x$ into $\tilde U$ corresponds to a diffeomorphic branch
$f^{\tilde r}:\tilde J \to \tilde U^c$ with distortion bound depending only
on $\lambda$.

\begin{remark}
Note that $U \subset f^{-1}(B_\eps(f(\Crit))$, where $\eps$ can be
taken arbitrarily small. As a result, the components $U^c$
need not have comparable sizes for all $c \in \Crit$,
but scale as $\eps^{1/\ell_c}$.
A similar difference in size is true for the components of $\tilde U$,
and this is
a major difference with the critical neighbourhoods as used in \cite{BLS}.
If all components of $\tilde U$ have the same size,
then \eqref{eq:BCC} can fail.
\end{remark}

To prove \eqref{eq:BCC}, fix
a $\lambda$-nice critical neighbourhood $U_0$,
and let $U_1 := \tilde U_0$ be the union of its central
return domains. This set is $\lambda$-nice again.
There exists $b = b(U_1) > 0$ such that for every $x \in I$,
$|Df^{r_1}(x)| \ge b$ for $r_1 = \min\{ n \ge 0 : f^n(x) \in U_1 \}$.
Continue to construct $\lambda$-nice neighbourhoods $U_i = \tilde U_{i-1}$ as
the union of the central return domains of the previous stage.
These set shrink at least exponentially in $i$, so
 we obtain a $\lambda$-nice neighbourhood
$U = U_p$ as small as we want.

Now let $r_1 \le r_2 \le \dots \le r_p = r$ be the return times of
$x$ to $U_1 \supset U_2 \supset \dots \supset U_p$.
There is a neighbourhood $J \owns x$ such $f^r$ maps $J$ diffeomorphically
onto a component of $U$.
The maps $f^{r_{i+1}-r_i}|_{f^{r_i}(J)}$ are composition of monotone branches
of the first return map to $U_i$. If $\lambda$ is sufficiently large, then
these branches are expanding, uniformly in $x$.
Hence $|Df^r(x)| \ge |Df^{r_1}(x)| \ge b$.
\end{proof}

\begin{proposition} \label{prop:summable}
Suppose that $f$ is a multimodal map
satisfying \eqref{eq:polynomial}. Then on every sufficiently small cylinder set $X$ there is a first extendible return inducing scheme $(X, F,\tau)$ and $t_1 \in [t_0, 1]$ \st for all $t \in (t_1,1]$:
and all potential shifts $S \ge 0$:
\[
Z_0(\Psi_S) := \sum_{F(x) = x} e^{\Psi_S(x)} < \infty,
\]
where $\Psi_S$ is the induced potential of the shifted potential
$\psi_S := \phi_t - S$.  Furthermore for the equilibrium state $\mu_{\Psi_{P(\phi_t)}}$, $\mu_{\Psi_{P(\phi_t)}}\{\tau=n\}$ decays exponentially for $t\in (t_1,1)$, and polynomially for $t=1$.
\end{proposition}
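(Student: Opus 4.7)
My plan is to adapt the calculation carried out in the Collet--Eckmann case (see the proof of Theorem~\ref{thm:CE}), replacing the exponential tail estimate by a polynomial one derived from \cite{BLS}. First, I would construct the inducing scheme: use Lemma~\ref{lem:BCC} (which, via \cite{BRSS}, extends the bounded backward contraction of \cite{BLS} to critical points of unequal order) to obtain arbitrarily small $\lambda$-nice critical neighbourhoods $U$ satisfying the uniform lower bound \eqref{eq:BCC}; then pick a cylinder $X\in\P_n$ compactly contained in $U$ and let $F$ be the first $\delta$-extendible return map to $X$ provided by Lemma~\ref{lem:exten}.

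The heart of the proof is the polynomial tail estimate
\[
\sum_{\tau_i\ge n}|X_i|\le Cn^{-\gamma},
\]
where $\gamma$ depends on $\beta$ and $\ell_{max}$ and grows without bound as $\beta$ increases. I would derive this by following the scheme-of-proof of \cite[Proposition 4.1]{BLS}, in which the polynomial growth hypothesis \eqref{eq:polynomial} together with the backward contraction of Lemma~\ref{lem:BCC} controls the size of each branch $X_i$ with $\tau_i=n$. The precise condition $\beta>\ell_{max}(1+1/t_0)-1$ is calibrated so that for any $t$ close enough to $1$ one has $\gamma>1/t$; this determines a permissible $t_1\in[t_0,1)$. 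With this in hand, I would mimic the Collet--Eckmann argument: Koebe gives $Z_0(\Psi_S)\asymp_{dis}\sum_ne^{-nS}\sum_{\tau_i=n}|X_i|^t$, and H\"older's inequality combined with the lap-number bound $\#\{\tau_i=n\}\le C_\eps e^{n(\htop(f)+\eps)}$ yields
\[
\sum_{\tau_i=n}|X_i|^t\le\Bigl(\sum_{\tau_i=n}|X_i|\Bigr)^t(\#\{\tau_i=n\})^{1-t}\le Cn^{-\gamma t}e^{n(\htop(f)+\eps)(1-t)}.
\]
For $t$ sufficiently close to $1$ the exponential factor is tamed by $e^{-nS}$ (taking $S\ge0$ is enough, since in the intended applications $S=P(\phi_t)>0$ when $t<1$ by \eqref{eq:pressure}); for $t=1$ the exponential factor disappears entirely and $\sum_nn^{-\gamma}<\infty$ by the choice of $\gamma$. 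An analogous estimate bounds $\sum_i\tau_ie^{\Psi_S}$.

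For the tail of $\mu_{\Psi_{P(\phi_t)}}$, the Gibbs property from Theorem~\ref{thm:BIP} together with Koebe give
\[
\mu_{\Psi_{P(\phi_t)}}\{\tau=n\}\asymp_{dis}\sum_{\tau_i=n}|X_i|^te^{-nP(\phi_t)},
\]
since Proposition~\ref{prop:all_indu} supplies $P_G(\Psi_{P(\phi_t)})=0$. For $t\in(t_1,1)$ inequality \eqref{eq:pressure} gives $P(\phi_t)>0$, so the factor $e^{-nP(\phi_t)}$ alone produces exponential decay of the tail. For $t=1$ we have $P(\phi_1)=0$ and the tail reduces to $\sum_{\tau_i=n}|X_i|$, which is exactly the polynomial tail from the second step. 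The main obstacle is the tail estimate itself: extending \cite[Proposition 4.1]{BLS} to the first extendible return scheme used here and, more importantly, to maps with unequal critical orders, which is precisely the purpose of Lemma~\ref{lem:BCC}. Once that quantitative comparison is established, the remainder of the proof is essentially the Collet--Eckmann bookkeeping with $e^{-\alpha n}$ replaced by $n^{-\gamma}$.
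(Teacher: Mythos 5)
Your overall strategy is right in outline: use Lemma~\ref{lem:BCC} to extend the \cite{BLS} machinery to unequal critical orders, set up the first extendible return scheme via Lemma~\ref{lem:exten}, and then estimate $Z_0(\Psi_S)$. But the way you carry over the Collet--Eckmann bookkeeping to the polynomial setting has a genuine gap in the case $t < 1$.

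Your central estimate is
\[
\sum_{\tau_i=n}|X_i|^t
\le\Bigl(\sum_{\tau_i=n}|X_i|\Bigr)^t\bigl(\#\{\tau_i=n\}\bigr)^{1-t}
\le Cn^{-\gamma t}\,e^{n(\htop(f)+\eps)(1-t)},
\]
followed by multiplication with $e^{-nS}$ and summation over $n$. For any fixed $t<1$ this sum diverges unless $S>(\htop(f)+\eps)(1-t)$, since a polynomial factor $n^{-\gamma t}$ cannot beat exponential growth. You propose to rescue this with $S=P(\phi_t)$, but by \eqref{eq:pressure} and convexity of the pressure, near $t=1$ one has $P(\phi_t)\approx(1-t)\lambda(\mu_1)$ with $\lambda(\mu_1)=h_{\mu_1}(f)\le\htop(f)$, so $P(\phi_t)<(1-t)(\htop(f)+\eps)$ in general and the factor $e^{-nS}$ does \emph{not} tame $e^{n(\htop(f)+\eps)(1-t)}$. (And the proposition actually claims convergence for all $S\ge0$, including $S=0$, which your argument cannot give for $t<1$.) The global-tail-plus-crude-lap-count argument works in the Collet--Eckmann case because there the tail itself is exponential and dominates the lap growth; in the summable case the tail is only polynomial and that trick stops working.

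The paper's actual proof avoids this by opening up the \cite{BLS} machinery rather than using a black-box polynomial tail. Branches $X_i$ with $\tau_i=n$ are grouped into \emph{clusters} $\tilde A$ sharing the same binding-period structure up to the time $\tau'(\tilde A)=n'$ when the last binding ends, and H\"older is applied \emph{within} each cluster. The cluster cardinality is only $e^{(\htop(f)+\eps)(n-n')}$, because the binding structure determines the orbit up to time $n'$ and only the free tail of length $n-n'$ contributes combinatorial freedom. Crucially, the size of the cluster shrinks exponentially in the free tail as well (Ma\~n\'e, \eqref{eq:outsideX}--\eqref{eq:outsideX2}), which supplies a compensating factor $\lambda_1^{-(n-n')t}$ or $\lambda_2^{-(n-n')t}$; for $t$ close enough to $1$ the product $e^{(\htop(f)+\eps)(n-n')(1-t)}\lambda_2^{-(n-n')t}$ is exponentially small. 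The polynomial decay coming from the growth condition \eqref{eq:polynomial} then only has to control the binding part $n'$, via the sums $\sum_{j}2^{-j}(\hat d_{n',j})^t$ and the two cluster classes $\hat{\mathcal P}'_{n,n'},\hat{\mathcal P}''_{n,n'}$. This separation of scales — exponential versus exponential in the free tail, polynomial alone in the binding part — is exactly what your approach collapses by using a single global tail estimate, and it is the missing ingredient you would need to add to make the $t<1$ case work.
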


\begin{proof}
For the case $t=1$, if the critical points all have the same order then \cite{BLS} gives an inducing scheme with polynomial tails (this is also sufficient to show $Z_0(\Psi_S)<\infty$ for all $S\ge 0$).  Below we show that inducing schemes from Lemma~\ref{lem:exten} fit into the framework of \cite{BLS}.  We also show that by  Lemma~\ref{lem:BCC}, the machinery of \cite{BLS} can also be applied to maps with critical points with different critical orders, by Lemma~\ref{lem:BCC}.  We focus on the details of the case $t<1$,  showing that these systems have exponential tails.  The proof that our inducing schemes give equilibrium states with polynomial tails for $t=1$ is left to the reader.
From here onwards, we restrict our proof to the case $t<1$.

Fix a single cylinder set $X \in \P_n$
and $\delta \in (0,\frac12)$ so small that a $\delta$-scaled
neighbourhood of $X$ is contained in $\pi(D)$ for at least one domain $D$ of the closed primitive subgraph $\E$ (cf. Lemma~\ref{lem:trans_subgraph})
of the Hofbauer tower. The inducing scheme will be the first extendible return to $X$ in the sense of Lemma~\ref{lem:exten}: namely, for each $X_i$, there is a neighbourhood $X_i'$ \st $f^{\tau_i}$ maps $X'_i$ diffeomorphically onto a $\delta$-scaled neighbourhood $X$.  Let $\hat X \subset \pi^{-1}(X)$ be \st the inducing scheme corresponds to the first return map to $\hat X$.
Since $X$ is a cylinder set, $\hat X$ is {\em nice} in the sense that for $n\ge 1$, $\hat f^n(\hat x)$ never intersects the interior of $\hat X$ for each $\hat x \in \partial \hat X$.  There is a dense orbit $\orb(\hat y)$ in $\E$, and for each visit $\hat y' \in  \orb(\hat y) \cap \hat X$, there is a
neighbourhood $\hat X_i \owns \hat y'$ \st $\hat f^{\tau_i}:\hat X_i \to \hat X$ is extendible to a $\delta$-scaled neighbourhood of a component of $\hat X$.  Therefore, the union $\cup_i X_i$ (and hence $X^{\infty}$) is dense in $X$, and the niceness of $\hat X$ guarantees that the sets $X_i$ are pairwise
disjoint.

Note that \eqref{eq:polynomial} implies that
\begin{equation}\label{eq:summable}
\sum_n \left( \gamma_n^{\ell_c-1} |Df^n(c_1)|\right) ^{-t_0/\ell_c} <
\infty,
\end{equation}
for every $c \in \Crit$, some $t_0<1$ and summable sequence
$\{ \gamma_n\}_{n \in \N}$
with $\gamma_n \in (0,\delta|X|)$. Throughout we can take
$\gamma_n = \frac{\delta|X|}{n \log^2(n+10)}$.

We use ideas and results of \cite{BLS} extensively. To start with,
given a neighbourhood $U$ of $\Crit$
as in Lemma~\ref{lem:BCC} (so that \eqref{eq:BCC} holds),
we can assign to any $x \in
I$ a sequence of {\em binding periods} along which the orbit of
$x$ shadows a critical orbit, followed by {\em free period} during
which the orbit of $x$ remains outside $U$. During the binding
period, derivative growth is comparable to derivative growth of
the critical orbit. The precise definition of binding period of $x
\in U$ is:
\[
p(x) = \min\{ k \ge 1 : |f^k(x) - f^k(c)| \ge \gamma_k |f^k(c) -
\Crit| \},
\]
where $c$ is the critical point closest to $x$. At the end of the
binding period, derivatives have recovered from the small
derivative incurred close to $c$. Indeed, Lemma 2.5 of \cite{BLS}
states that there is $C_0 > 0$, independent of $U$, \st
\[
F'_p(x) := \inf\{ |Df^p(x)| : x \in U, p(x) = p \} \ge C_0 \left(
\gamma_p^{\ell_c-1} |Df^p(f(c))| \right)^{1/\ell_c}.
\]
where $c$ is the critical point closest to $x$. If $U$ is a small
neighbourhood, then $p(x)$ is big. Hence we can take $U$ so small
that the minimal binding period $p_U := \min\{ p(x) : x \in U\}$
is so large that Equation (5) in \cite{BLS} holds:\footnote{Here we take into account the typo in Equation (5) of \cite{BLS} where the $-$ in the exponent is missing.}
\begin{equation}\label{eq:xxx}
\max_{c \in \sCrit} \sum_{s \le n} \ \sum_{
\substack{(p_1, \dots, p_s)\\
\sum_i p_i \le n \\
p_i \ge p_U} } \ \prod_{p_i} \zeta \left(\ \gamma_{p_i}^{\ell_c-1}
|Df^{p_i}(f(c))|\ \right)^{-1/\ell_c} \le 1.
\end{equation}
Here $\zeta = 4C_4\#\Crit$ (see later in the proof)
is a fixed number involving a Koebe constant and a
constant emerging from the Bounded Backward Contraction Condition
\eqref{eq:BCC}, see Lemma~\ref{lem:BCC}.
The constant $\zeta$ is independent of $U$.
\\[0.2cm]
During the {\em free period}, derivatives grow exponentially
(Ma\~n\'e's Theorem, see \cite[Theorem III.5.1.]{MSbook}),
because there exist $C_1 > 0$ and $\lambda_1 > 1$, depending only
on $f$ and $U$, \st
\begin{equation}\label{eq:outsideX}
|Df^k(x)| \ge C_1 \lambda_1^k \quad \mbox{ if } \quad f^i(x)
\notin U \mbox{ for } 0 \le i < k.
\end{equation}
Now fix a neighbourhood $U$ of $\Crit$ as in Lemma~\ref{lem:BCC} with $\partial U \subset
\cup_n f^{-n}(\Crit)$ and so small that estimate \eqref{eq:xxx}
holds.  In fact, parallel to \eqref{eq:outsideX}, one can derive
sets that avoid $U$ for a long time are exponentially small: there are $C_a > 0$ and $\lambda_2 > 1$ \st
\begin{equation}\label{eq:outsideX2}
|f^n(A)| \le C_a \lambda_2^{-k} \quad \mbox{ if } \quad f^i(A)
\cap U = \emptyset \mbox{ for } 0 \le i < k.
\end{equation}

Since $\partial U$ consists of precritical points, and each $X_i$
is mapped monotonically onto $X$, there is $\kappa$ \st $f^j(X_i)
\cap \partial U \neq \emptyset$ implies $j \ge \tau_i - \kappa$.
Given $X_i$ and $j < \tau_i - \kappa$, $f^j(X_i)$ will either
be contained in or disjoint from $U$.  Thus we can define  $\nu_j(X_i)$ to be the time at which the $j$-th binding period starts and the binding periods itself as $p_j(X_i) = \min\{ p_j(x) : x \in X_i\}$.  Since $f^{\tau_i - n}$ maps $f^{n}(X_i)$ to $X$ in an extendible way for each $n \le \tau_i$, the distortion of $f^{\tau_i - n}|_{f^n(X_i)}$ is bounded uniformly in $i$ and $n$. We will write $\nu_j = \nu_j(X_i)$ and $p_k=p_k(X_i)$ if it is clear from the
context which $X_i$ is meant. Note that the inducing time $\tau_i$
of $X_i$ cannot be inside a binding period, because during the
binding period, $X_i$ shadows some critical value $f^k(c)$
$\gamma_k$-closely, and $\gamma_k < \delta|X|$ for every $k$.

In the terminology of \cite{BLS}, every return time is a {\em deep return}, and there are no {\em shallow returns}.  Let $\tau'_i$ be the time that the final binding period ends, so $\tau'_i = \nu_s + p_s \le \tau_i$ if $X_i$ has $s$ binding periods.

To estimate $Z_0(\Psi_S)$, we first group together domains $X_i$
into a  `cluster' if they have the same binding periods $p_1, \dots,
p_s$ up to their common time $\tau'_i$ and $f^j(\conv \tilde
A) \cap \Crit = \emptyset$ for $j \le \tau_i$, where $\conv \tilde A$
is the convex hull of the cluster.  We have by the H\"older inequality
\begin{align*}
Z_0(\Psi_S) &\asymp_{dis} \sum_i |X_i|^t e^{-\tau_i S} = \sum_n e^{-nS}
\sum_{n' \le n} \sum_{ \stackrel{\mbox{ \tiny cluster }\tilde
A}{\tau(\tilde A) = n, \tau'(\tilde A)= n'} }
\sum_{X_i \subset \tilde A} |X_i|^t  \\[0.05cm]
&\le \sum_n e^{-nS} \sum_{n' \le n} \sum_{ \stackrel{\mbox{ \tiny
cluster } \tilde A}{\tau(\tilde A) = n, \tau'(\tilde A)=
n'}}
(\#\{ i : X_i \mbox{ belongs to } \tilde A\})^{1-t}
\left( \sum_{X_i \subset \tilde A}
|X_i| \right)^t   \\[0.05cm]
&\le \sum_n e^{-nS} \sum_{n' \le n}
e^{(\htop(f)+\eps)(n-n')(1-t)}
\sum_{ \stackrel{\mbox{ \tiny cluster }\tilde A} {\tau(\tilde A) =
n,\ \tau'(\tilde A)= n'}} |{\rm conv}\tilde A|^t,
\end{align*}
where the cardinality $\#\{ i : X_i \mbox{ belongs to } \tilde A\}$ is
estimated by $e^{(\htop(f)+\eps)(n-n')}$ for some small $\eps =\eps(t)>
0$, because the cluster $\tilde A$ has $n-n'$ iterates left to the
inducing time.

To estimate $\sum_{\tau(\tilde A) = n, \tau'(\tilde A)= n'}
|\tilde A|^t$, we distinguish two classes of clusters depending on
the amount of free time in the first $\tau'$ iterates. For
$\eta > 0$ to be fixed later, and for given $n$ and $n'$, let
\[
\hat{\mathcal P}'_{n,n'}=\left\{ \tilde A \ : \ \tau'(\tilde A) =
n',\tau(\tilde A) = n, \, \sum_{i=1}^s p_i \le \eta n \right\}
\]
and
\[
\hat{\mathcal P}''_{n,n'} = \left\{ \tilde A \ : \ \tau'(\tilde A) =
n',\tau(\tilde A) = n, \, \sum_{i=1}^s p_i > \eta n \right\}.
 \]
The estimates for $\hat{\mathcal P}'_{n,n'}$ and $\hat{\mathcal
P}''_{n,n'}$ will use Lemmas 3.5 and 3.6 of \cite{BLS} respectively.
Indeed, Lemma 3.5 of \cite{BLS} gives some $\eta$ (fixing the
definition of $\hat{\mathcal P}'_{n,n'}$) and $\lambda_3 > 1$ depending on $\lambda_1$ and $\eta$ \st
\begin{equation}\label{eq:P'}
\sum_{\tilde A \in \hat{\mathcal P}'_{n,n'}} |\tilde A|^t \le
\lambda_3^{-\frac{1}{2}n' t} \sup_{\tilde A \in \hat{\mathcal
P}'_{n,n'}} |f^{n'}(\tilde A)|^t \le C_1^{-t} \lambda_3^{-\frac{1}{2}n'
t} \lambda_1^{-(n-n')t},
\end{equation}
where the last inequality follows by \eqref{eq:outsideX} because $f^{n'}(\tilde A)$ is disjoint from $U$ for the remaining $n-n'$ iterates.

Continuing with this $\eta$, define $d_n(c) := \min_{i < n}
(\gamma_i/|Df^i(f(c))|)^{1/\ell_c}|f^i(c) - \Crit| \le 1$ (formula (2) in \cite{BLS}) and let (following \cite[page 635]{BLS})
\[
\hat d_{n,j}(c) = d_i(c)  \text{ for }  i = \max\left\{ \left\lceil
\frac{\eta n}{2j^2} \right\rceil , 1\right\}.
\]
Then an adaptation of Lemma 3.6 of \cite{BLS} gives a constant
$C_2 > 0$ \st
\begin{equation}\label{eq:mainlemma}
\sum_{\tilde A \in \hat{\mathcal P}''_{n,n'}} |\tilde A|^t \le C_1^{-t}
\lambda_1^{-(n-n')t}  C_2 \sum_{s=1}^{n'} 2^{-j} \left( \max_{c
\in \sCrit}\hat d_{n',j} \right)^t.
\end{equation}
Indeed, select the longest binding period among $(p_1, \dots,
p_s)$ of the cluster, and call it $p_j$. Note that $p_j > \eta
n/(2j^2)$, because otherwise $\sum_{k=1}^s p_k <  \eta n$,
contradicting the definition of $\hat{\mathcal P}''_{n,n'}$. The
interval $[x,y] := f^{\nu_j}(\conv \tilde A)$ satisfies
\[
|x-y| \le C_3 \ \max_{p \ge \eta n/2j^2} d_p(c) \cdot
|f^{\nu_j+p_j}(\conv \tilde A)| = C_3 \ \hat d_{n',j}(c) \cdot
|f^{\nu_j+p_j}(\conv \tilde A)|,
\]
where $C_3$ is a uniform distortion constant. Write $\tilde A = \tilde A_{p_1, \dots, p_{j}}$ to indicate that $p_j$ is the longest binding period of $\tilde A$. By Lemma 3.2 of \cite{BLS}, and recalling that all returns are {\em deep}, we can find $C_4$ \st
\[
|\tilde A_{p_1, \dots, p_j}|
\le C_4^{j-1} \ |f^{\nu_{j-1}+p_{j-1}}(\conv \tilde A_{p_1, \dots, p_j})| \
\prod_{k=1}^{j-1} \frac{1}{F'_{p_k}}.
\]
Following the proof of Lemma 3.6  of \cite{BLS}, we obtain
\begin{align*}
\sum_{ \stackrel{\mbox{ \tiny cluster }\tilde A}{\tau(\tilde A) =
n, \tau'(\tilde A)= n'}} \!\!\! |\tilde A|^t &\le \sum_{j=1}^{n'}
\sum_{(p_1, \dots, p_j)}
|\tilde A_{p_1, \dots ,p_j}|^t \\
&\le \sum_{j=1}^{n'} \left( C_3 \max_{c \in \sCrit} \hat
d_{n',j}(c)\right)^t  \\
& \hspace{5mm} \times \sum_{(p_1, \dots, p_j)} (2\#\Crit)^j
\left( C_4^{j-1} \prod_{k=1}^{j-1} \frac{1}{F'_{p_k}} \right)^t
|f^{\nu_j+p_j}(\conv \tilde A_{p_1, \dots ,p_j})|^t,
\end{align*}
where the $(2\#\Crit)^j$ accounts for the different sides of
critical points that have intervals with the same binding period.
Using \eqref{eq:xxx} with $\zeta = 4C_4\#\Crit$, we can
estimate this by
\[
\sum_{j=1}^{n'} \left( C_3 \max_{c \in \sCrit} \hat
d_{n',j}(c)\right)^t \cdot 2^{-j} \cdot |f^{\nu_j+p_j}(\tilde
\conv A_{p_1, \dots ,p_j})|^t.
\]
The maps $f^{\nu_j+p_j}|_{\mbox{\tiny conv }\tilde A_{p_1, \dots ,p_j}}$ and $f^{n'-(\nu_j+p_j)}|_{f^{\nu_j+p_j}(\mbox{\tiny conv }\tilde A_{p_1, \dots ,p_j})}$ have bounded distortion. Each set $f^{n'}(\tilde A_{p_1, \dots ,p_j})$ is disjoint from $U$ for the
remaining $n-n'$ iterates, so \eqref{eq:outsideX2} gives
$|f^{n'}(X_i)| \le C_1^{-1} \lambda_2^{-(n-n')}$. Therefore
\[
\sum_{ \stackrel{\mbox{ \tiny cluster }\tilde A} {\tau(\tilde A) =
n, \tau'(\tilde A)= n'}} \!\!\! |\tilde A|^t \ \le C_1^{-t}
\lambda_2^{-(n-n')t} C_2 \sum_{j=1}^{n'} 2^{-j} \left(\max_{c \in
\sCrit} \hat d_{n',j}(c)\right)^t ,
\]
for $C_2 = (C_3 C_4)^t$. This proves \eqref{eq:mainlemma}.

Now we obtain (using \eqref{eq:mainlemma} and \eqref{eq:P'})
\begin{align*}
Z_0(\Psi_S) &\le \sum_n e^{-nS} \sum_{n' \le n}
e^{(\htop(f)+\eps)(n-n')(1-t)} \left( \sum_{\tilde A \in \hat{\mathcal
P}'_{n,n'}} |\tilde A|^t + \sum_{\tilde A \in \hat{\mathcal P}''_{n,n'}}
|\tilde A|^t
\right) \\
& \hspace{-10mm} \le \sum_n e^{-nS}  \sum_{n' \le n}
e^{(\htop(f)+\eps)(n-n')(1-t)}\lambda_2^{-(n-n')t}
 \left( \lambda_3^{-\frac12 n't} +
\sum_{j=1}^{n'} 2^{-j} \left(\max_{c \in \sCrit} \hat
d_{n',j}(c)\right)^t \right),
\end{align*}
which is finite, provided $t$ is sufficiently close to $1$. The
proof that $\int\tau~d\mu_\Psi<\infty$ amounts to showing that
$n e^{-nS} \sum_{n' \le n} \sum_{\tau_i = n , \tau'_i = n'} |X_i|^t$
is summable in $n$, cf. \eqref{eq:int}.  If $t < 1$, then $S = P(\phi) > 0$ by \eqref{eq:pressure}, so for $t$ sufficiently close to 1, the exponential factor $e^{-nS}$ dominates $n$ and summability follows.  This also implies the required exponential tails property for $(X,F,\mu_{\Psi_{P(\phi_t)}})$.
\end{proof}

For the case $t=1$ we already know by \cite{BRSS} that there is an acip, so the above proposition shows that the acip must have polynomial tails.  Hence the proof of Theorem~\ref{thm:poly} for (except for the proof that
$t\mapsto P(\phi_t)$ is analytic, which is postponed to the end of
Section~\ref{sec:tails}) essentially amounts to an application of
Proposition~\ref{prop:all_indu} (Case 4.) to the case $t\in (t_1,1)$, and is completed in a similar way to the proof of Theorem~\ref{thm:CE}.  The rate of decay of the tails follows from Proposition~\ref{prop:summable}.

The following lemma, which will be particularly useful in Section~\ref{sec:tails}, implies that we can fix an inducing scheme so that any measure with large free energy, for some $\phi_t$, must be compatible to this inducing scheme.

\begin{lemma}
For any point $x\in I$ there exists an inducing scheme $(X,F)$ as in
Lemma~\ref{lem:exten} with $x\in X$ and so that the following hold.
\begin{itemize}
\item In the case of, and with $t_1 < 1$ as in Theorem~\ref{thm:poly} (polynomial growth rate): for any $t_1<t_2<1$ there exists $\eps_0>0$ so that for all
$t\in (t_1,t_2)$, if
$h_\mu(f)+\int\psi_t~d\mu>P_+(\psi_t)-\eps_0$ then $\mu$ is compatible to $(X,F)$.

\item  In the case of, and with $t_1 < 1 < t_2$ as in Theorem~\ref{thm:CE}
(Collet-Eckmann):
there exists $\eps_0>0$ so that for all $t\in (t_1, t_2)$,  if
$h_\mu(f)+\int\psi_t~d\mu>P_+(\psi_t)-\eps_0$ then $\mu$ is compatible to $(X,F)$.
\end{itemize}

\label{lem:uniform scheme for nat}
\end{lemma}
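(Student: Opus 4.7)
The plan is to reduce the statement to an application of Proposition~\ref{prop:unif scheme} together with the uniform entropy lower bound supplied by Lemma~\ref{lem:free energy bound}. Write $\psi_t = \phi_t - P(\phi_t)$ (so that $P(\psi_t)=0$), and in both settings let $J \subset (t_1,t_2)$ be the closed sub-interval under consideration. By Lemma~\ref{lem:free energy bound}, there exist $\eps_0, \eps > 0$ (depending only on $J$) so that any measure $\mu$ with $h_\mu(f)+\int\psi_t\,d\mu > -\eps_0$ satisfies $h_\mu(f) > \eps$.

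First I would fix the given $x \in I$ and, using Lemma~\ref{lem:exten}, choose a cylinder $X^0 \ni x$ small enough that the associated first $\delta$-extendible return scheme $(X^0,F_0)$ is defined and the equilibrium states $\mu_{\phi_t}$ (which are compatible to some inducing scheme given by the proof of Theorems~\ref{thm:poly} and \ref{thm:CE}) lift to it; Lemma~\ref{lem:eq sees all} may be used to transfer compatibility from the scheme produced in those proofs to this one. Next, invoke Lemma~\ref{lem:compat entro} with the entropy bound $\eps$ to obtain $R \in \N$ and a compact set $\hat E \subset \hat I_R$ bounded away from $\bd \D$ such that $\hat\mu(\hat E) \ge \eta/2$ for every $\mu \in \M_{erg}$ with $h_\mu(f) \ge \eps$.

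Because $\hat E$ meets only finitely many domains of $\D$ and is at positive distance from $\bd \D$, I can cover $\hat E$ by a finite collection of sets $\hat X^1,\dots,\hat X^N$, each constructed from a cylinder $X^k$ via the recipe of Lemma~\ref{lem:exten}, so that $\{\hat X^k\}_{0\le k\le N}$ has property $Cover(\eps)$. For each of these schemes one must verify hypotheses (a) and (b) of Proposition~\ref{prop:unif scheme}: summable variations follows from Lemma~\ref{lem:sum var}, and $\sum_i \tau_i^k e^{\sup \Psi^k_t} < \infty$ comes from exactly the Koebe/H\"older/tail-estimate computations carried out in the proofs of Theorem~\ref{thm:CE} and Proposition~\ref{prop:summable}, applied to each $(X^k,F_k)$. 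Crucially, those estimates give bounds that are continuous in $t$, so they hold uniformly for $t \in J$ after possibly shrinking $J$ slightly toward the interior.

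Proposition~\ref{prop:unif scheme} then yields, for each $t \in J$, a number $\theta_t > 0$ such that $h_\mu(f)+\int \psi_t\,d\mu > -\theta_t$ forces $\hat\mu(\hat X^0) > 0$, i.e.\ compatibility of $\mu$ with $(X,F):=(X^0,F_0)$. The main obstacle is to show that $\theta_t$ can be chosen uniformly in $t \in J$: this comes down to checking that the quantity $\xi = \mu_{\Psi^k_t}(\mathcal Y^k)/K$ appearing in the proof of Proposition~\ref{prop:unif scheme} (and the auxiliary $P^\flat_G$) is bounded below on $J$. Continuity of $t \mapsto \mu_{\Psi^k_t}$ on the finitely many schemes, together with compactness of $J$, gives a uniform lower bound, so we may set $\eps_0 := \min\{\eps_0, \inf_{t\in J}\theta_t\} > 0$, which completes the proof.
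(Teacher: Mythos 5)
Your proposal follows the paper's own route: reduce to Proposition~\ref{prop:unif scheme} using the uniform entropy bound from Lemma~\ref{lem:free energy bound}, the $Cover(\eps)$ family of schemes built via Lemma~\ref{lem:compat entro}, summable variations from Lemma~\ref{lem:sum var}, the tail estimates from the proofs of Theorem~\ref{thm:CE}/Proposition~\ref{prop:summable}, and Lemma~\ref{lem:eq sees all} to move between schemes. You are, however, more explicit than the paper about one genuine subtlety that the published proof passes over: the $\theta$ produced by Proposition~\ref{prop:unif scheme} a priori depends on the fixed potential $\psi$, so to conclude the lemma one must check that $\inf_t \theta_t>0$. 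Your attempt to close that gap is in the right direction but is under-justified as written: you appeal to ``continuity of $t\mapsto\mu_{\Psi^k_t}$'' without saying in which topology or why, and you also ``shrink $J$ slightly toward the interior,'' whereas the lemma demands a single $\eps_0$ that works on the entire open interval $(t_1,t_2)$. In the polynomial case this can still be rescued because $t_2<1$ and one may pass to the closed interval $[t_1,t_2]$ (note $P(\phi_t)$ stays bounded away from $0$ there so the bounds survive at the endpoints, and the proposition's $\theta$-estimate involves the quantities $\xi = \mu_{\Psi^k_t}(\mathcal Y^k)/K$ which depend continuously on $t$ through the Gibbs weights); but this deserves a real argument, not a parenthetical remark. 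Modulo filling in that continuity/compactness step carefully, the proof is correct and essentially coincides with the paper's.
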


\begin{proof}   By Lemma~\ref{lem:free energy bound}, there exist $\eps_0,\eps>0$ \st for any $t\in (t_1,t_2)$, $h_\mu(f)+\int\psi_t~d\mu>P_+(\psi_t)-\eps_0$ implies $h_\mu(f)>\eps$.  We can choose $\{\hat X^k\}_{0\le k\le N}$ as in Proposition~\ref{prop:unif scheme}: we need only select these sets so small that the corresponding inducing scheme is uniformly expanding, in order to satisfy (a) of that lemma, and so that $x\in \pi(\hat X^0)$.  Property (b)
of Proposition~\ref{prop:unif scheme}
follows for all $t\in (t_1,t_2)$ by the computations in the proof of Theorem~\ref{thm:CE} and in Proposition~\ref{prop:summable}.  The fact that for any $t\in (t_1,t_2)$, $\mu_t$ is compatible to our $(X^0, F_0)$ follows by Lemma~\ref{lem:eq sees all}.  Therefore, Proposition~\ref{prop:unif scheme} implies that the measures $\mu$ must be compatible to $(X^0,F_0)$. Finally take $(X,F)=(X^0,F_0)$.
\end{proof}

\section{Exponential Tails and Positive Discriminant}
\label{sec:tails}

In Theorems~\ref{thm:poly} and \ref{thm:CE} we see that
with the exception of non-Collet-Eckmann maps (i.e., satisfying
\eqref{eq:polynomial} but not \eqref{eq:CE}) with potential  $\phi
= -\log |Df|$, all the equilibrium states $\mu_\phi$ obtained are
compatible to an inducing scheme with exponential tail behaviour:
$\mu_\Psi(\{ x \in X : \tau(x) = n \}) \le C e^{-\alpha n}$ for
some $C, \alpha > 0$.

The literature gives many consequences; we mention a few:
\begin{itemize}
\item The system $(I,f,\mu_\phi)$ has exponential
decay of correlations and satisfies the Central Limit Theorem.
This follows directly from Young's results \cite{Y} relating the
decay of correlations to the tail behaviour of the Young tower.
\item The system $(I,f,\mu_\phi)$ satisfies the Almost Sure Invariance Principle (ASIP), see \cite{MN} or \cite{HKerg} for earlier ideas in this direction.
\item In \cite{Col}, Collet proves Gumbel's Law (which is related to
exponential return statistics) for the acip provided the Young
tower construction has exponential tail behaviour. It seem likely
that this result extends to the equilibrium states for $\phi_t =
-t \log|Df|$ and $t < 1$.
\end{itemize}
Another application of exponential tails pertains to analyticity
of the pressure function $t \mapsto P(\phi_t)$ and the absence of phase transitions (which would be expressed by lack of differentiability of the pressure function).
A key result here is phrased by Sarig \cite{Saphase}
in terms of directional derivatives
\[
\frac{d}{ds} P(\psi + s\upsilon)|_{s = 0}
\]
where $\psi$ and $\upsilon$ are suitable potentials.
To prove analyticity of $t \mapsto P(t\phi)$ near $t = 1$,
we take $\upsilon = \psi = \phi$.
Sarig obtains his results for Gurevich pressure. For appropriate potentials and inducing scheme, he first introduces the concept of discriminant $\dm$, which is positive if and only if the inducing scheme has exponential tails with respect to the equilibrium state of the induced potential.  Next it is shown that if the inducing scheme is a first return map, then positive discriminant implies analyticity of $s \mapsto P_G(\psi + s\upsilon)$ near $s = 0$.
In our case, the inducing scheme is a first return map on the Hofbauer tower, but also a Rokhlin-Kakutani tower can be constructed for which the first return map to the base is isomorphic to the inducing scheme.  Currently, in the context of smooth dynamical systems, these towers tend to be called a Young towers
\cite{Y}.
It is the better distortion properties than the Young tower on elements
of its natural partition $\Delta_{i,j}$, see below, that makes us prefer the Young tower over the Hofbauer tower in the section.

The resulting analyticity of the pressure function on the Young tower then needs to be related to the original system. We will do that using a transition from Gurevich pressure to the following type of pressure:
\[
P_+(\psi):=\sup\left\{h_\mu(f)+\int\psi~d\mu : \mu \in \M_+\hbox{ and } -\int\psi~d\mu<\infty \right\}
\]
for which we use a result by Fiebig et al. \cite{FFY}.

The set-up of the remainder of this section is as follows.
We first introduce the Young tower associated with the inducing scheme, and then discuss directional derivatives and discriminants.
This gives us the necessary terminology to state the main theorem
(Theorem~\ref{thm:tails and analytic}). Then we show how this can be applied to prove the remaining analyticity parts of Theorems~\ref{thm:poly} and \ref{thm:CE}.
Finally, we prove Theorem~\ref{thm:tails and analytic}.

Let $X \subset I$ and $(X,F,\tau)$ be an inducing scheme on $X$
where $F= f^\tau$. As usual we denote the set of domains of the
inducing scheme by $\{ X_i \}_{i \in \N}$.  The \emph{Young tower},
see \cite{Y}, is defined as the disjoint union
\[
\Delta = \bigsqcup_{i \in \N} \bigsqcup_{j = 0}^{\tau_i-1}
(X_i,j),
\]
with dynamics
\[
f_\Delta(x,j) = \left\{ \begin{array}{ll}
(x,j+1) & \mbox{ if } x \in X_i, j < \tau_i-1; \\
(F(x),0) & \mbox{ if } x \in X_i, j = \tau_i-1. \\
\end{array} \right.
\]
For $i\in \N$ and  $0\le j<\tau_i$, let $\Delta_{i,j} :=\{(x,j):
x\in X_i\}$ and $\Delta_l:=\bigcup_{i \in \N}\Delta_{i,l}$ is
called the {\em $l$-th floor}. Define the natural projection $\pi_\Delta:\Delta \to X$ by $\pi_\Delta(x,j) =
f^j(x)$, and $\pi_X:\Delta \to X$ by $\pi_X(x,j) = x$.
Note that $(\Delta, f_\Delta)$ is a Markov system, and the first return map of $f_\Delta$ to the \emph{base} $\Delta_0$ is isomorphic $(X,F,\tau)$.

Also, given $\psi:I\to \R$, let $\psi_\Delta:\Delta\to \R$ be defined by $\psi_\Delta(x,j) =\psi(f^j(x))$.  Then the induced potential of $\psi_\Delta$ to the first return map to $\Delta_0$ is exactly the same as the induced potential of $\psi$ to the inducing scheme $(X,F,\tau)$.

The differentiability of the pressure functional can be expressed
using directional derivatives $\left.\frac d{ds}P_G(\psi+s
\upsilon)\right|_{s=0}$.  We will use the method of \cite{Saphase},
but will require less stringent conditions on the potentials.  Let
$(W,f)$ be a topologically mixing dynamical system with the set
of  $n$-cylinders denoted by $\Q_n$.  For a potential
$\psi:W\to [-\infty,\infty]$ we can ask that $\psi$ satisfies
\begin{equation}
\sup_{C_n\in \P_n}\sup_{x,y\in
C_n}|\psi_n(x)-\psi_n(y)|=o(n).\label{eq:FFY} \end{equation}  As
shown in \cite{FFY}, this guarantees that $\psi$ satisfies
\eqref{eq:subadd} which means that the Gurevich pressure is well
defined and independent of the initial cylinder set $X_i$, where
$Z_n(\psi)=Z_n(\psi,X_i)$; also Theorem~\ref{thm:FFY} below is
satisfied.  Moreover, if the induced potential is weakly H\"older
continuous, then \eqref{eq:FFY} is a sufficient condition on the
original potential to allow us to use the results of \cite[Section
6]{Saphase}, see Theorem~\ref{thm:analytic}.

For an inducing scheme $(X,F, \tau)$, let $\psi_\Delta$ and
$\upsilon_\Delta$ be the lifted potentials to the Young tower.
Suppose that $\psi_\Delta:\Delta \to \R$ satisfies \eqref{eq:FFY}.
We define the set of \emph{directions} with respect to $\psi$ as the
set
\begin{align*}
Dir_F(\psi):=\Bigg\{\upsilon:\sup_{\mu\in \M_+}\left|\int\upsilon~d\mu\right|<& \, \infty, \upsilon_\Delta
\hbox{ satisfies \eqref{eq:FFY}}, \sum_{n=2}^\infty
V_n(\Upsilon)<\infty,\hbox{ and }\\
& \exists \eps>0 \hbox{ s.t. }
P_G(\psi_\Delta+s\upsilon_\Delta)<\infty \ \forall \ s \in
(-\eps,\eps) \Bigg\},
\end{align*}
where $\Upsilon$ is the induced potential of $\upsilon$. As in previous sections, let $\psi_S := \psi-S$ (and so $\Psi_S=\Psi-S \tau$).  Set $p_F^\ast[\psi]:=\inf\{S : P_G(\Psi_S)<\infty\}$.\footnote{Note that we use
the opposite sign for $p_F^\ast[\psi]$ to Sarig.}  If $p_F^\ast[\psi]>-\infty$, we define the \emph{$X$-discriminant}
of $\psi$ as
\[
\dm_F[\psi]:=\sup\{P_G(\Psi_S):S > p_F^\ast[\psi]\}\le \infty.
\]
Given a dynamical system $(X,F)$, we say that a potential $\Psi:X
\to \R$ is \emph{weakly H\"older continuous} if there exist
$C,\gamma>0$ \st $V_n(\Psi)\le C\gamma^n$ for all $n \ge 0$.

The main result of this section is as follows:

\begin{theorem} Let $f \in \H$ be an interval map with potential
$\phi:I\to (-\infty,\infty]$.  Suppose that $\phi$ satisfies \eqref{eq:FFY}
or is of the form $\phi=-t\log|Df|$. Take $\psi = \phi - P(\phi)$.  Then
$\dm_F[\psi]>0$ if and only if $(X,F,\mu_\Psi)$ has exponential tails.

Moreover, the inducing scheme can be chosen \st
given $\upsilon\in Dir_F(\psi)$ \st
$\psi_\Delta+\upsilon_\Delta$ is continuous and the induced potential
$\Upsilon$ is weakly H\"older continuous, there exists $\eps>0$ \st
$s \mapsto P_+(\psi+s \upsilon)$ is real analytic on $(-\eps,\eps)$.
\label{thm:tails and analytic}
\end{theorem}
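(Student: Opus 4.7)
The proof splits into two parts. For the equivalence $\dm_F[\psi] > 0 \Leftrightarrow (X,F,\mu_\Psi)$ has exponential tails, the starting point is that $P(\psi)=0$ implies $P_G(\Psi)=0$ by Case~3 of Proposition~\ref{prop:all_indu}. Theorem~\ref{thm:BIP} furnishes a Gibbs measure $\mu_\Psi$, and summable variations of $\Psi$ (Lemma~\ref{lem:sum var}) yield, with $\Psi_i := \sup_{x\in X_i}\Psi(x)$,
\[
\|\L_{\Psi_S}1\|_\infty \asymp \sum_i e^{\Psi_i - S\tau_i} \asymp \int_X e^{-S\tau}\,d\mu_\Psi.
\]
Exponential tails for $\mu_\Psi$ (i.e.\ $\mu_\Psi\{\tau=n\} \le Ce^{-\alpha n}$) are equivalent to the right-hand side being finite for some $S<0$, which by Proposition~1 of \cite{Sathesis} is equivalent to $P_G(\Psi_S)<\infty$ for some $S<0$, i.e.\ $p_F^\ast[\psi]<0$. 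Since $P_G(\Psi_S)$ is continuous on $(p_F^\ast[\psi],\infty)$ by Lemma~\ref{lem:press cts}, decreasing in $S$, and equal to $0$ at $S=0$, $p_F^\ast[\psi]<0$ is in turn equivalent to $\dm_F[\psi]>0$.

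For the analyticity claim, I would use Lemma~\ref{lem:uniform scheme for nat} to choose an inducing scheme $(X,F,\tau)$, built as a first return map on the Hofbauer tower in Section~\ref{sec:induce}, such that for $s$ in a small neighbourhood of $0$, every $\mu \in \M_+$ whose free energy for $\psi+s\upsilon$ is within $\eps_0$ of $P(\phi+s\upsilon)$ is compatible to $(X,F,\tau)$. Pulling back, this scheme is isomorphic to the first return map to the base $\Delta_0$ of the associated Young tower $\Delta$, which is the setting required by \cite{Saphase}. In our context (as in Theorems~\ref{thm:poly} and \ref{thm:CE}) the constructed $\mu_\Psi$ has exponential tails, and the first part of the theorem then gives $\dm_F[\psi]>0$. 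The assumptions $\upsilon \in Dir_F(\psi)$, $\psi_\Delta+\upsilon_\Delta$ continuous, and $\Upsilon$ weakly H\"older together supply the regularity required by Sarig. His theorem from \cite[\S6]{Saphase}, which precisely requires the scheme to be a first return map with positive discriminant and weakly H\"older induced potentials, then yields real analyticity of the two-parameter family
\[
G(s,p) := P_G(\Psi + s\Upsilon - p\tau)
\]
near $(0,0)$, using that $\tau$ is constant on each $X_i$ and hence has zero variations in the induced system.

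Abramov's identity (Lemma~\ref{lem:Abra}), the uniform compatibility from Lemma~\ref{lem:uniform scheme for nat}, and the Variational Principle (Theorem~\ref{thm:BIP}(b)) together identify $P_+(\psi+s\upsilon)$ as the unique $p$ solving $G(s,p)=0$: indeed, Abramov turns $h_\mu(f)+\int(\psi+s\upsilon-p)\,d\mu$ into $(\int\tau\,d\mu_F)^{-1}(h_{\mu_F}(F)+\int(\Psi+s\Upsilon-p\tau)\,d\mu_F)$, and the sup of the latter over the induced measures is exactly $G(s,p)$. Since $\partial_p G(0,0) = -\int\tau\,d\mu_\Psi \in (-\infty,0)$ (finiteness coming from the exponential tails just established, positivity of $\tau$ guaranteeing nonvanishing), the implicit function theorem delivers a real analytic function $s \mapsto P_+(\psi+s\upsilon)$ on some $(-\eps,\eps)$. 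The main obstacle is the uniform-compatibility step: showing that every near-optimal measure for $\psi+s\upsilon$ lifts to our fixed scheme, uniformly in $s$. This is what Lemma~\ref{lem:uniform scheme for nat} provides, but only once one has verified that its uniformity extends from $\phi_t$ to the perturbed potentials $\psi + s\upsilon$, which is where the boundedness hypothesis $\sup_{\mu \in \M_+}|\int\upsilon\,d\mu|<\infty$ built into the definition of $Dir_F(\psi)$ enters essentially.
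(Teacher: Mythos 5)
Your first half (tails $\Leftrightarrow$ positive discriminant) is essentially the paper's own argument: both rely on the Gibbs property to identify $\mu_\Psi\{\tau=n\}$ with $\sum_{\tau_i=n}e^{\Psi_i}$ up to distortion, and then read off that finiteness of $P_G(\Psi_S)$ for some $S<0$ is both equivalent to exponential decay of the tail and to $p_F^\ast[\psi]<0$, hence $\dm_F[\psi]>0$. That part matches.

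Your second half genuinely departs from the paper's route, and the comparison is worth spelling out. The paper invokes Sarig's Theorem 4 (\cite[Theorem~4]{Saphase}, reproduced as Theorem~\ref{thm:analytic}) as a black box applied on the Young tower $\Delta$: this gives analyticity of $s\mapsto P_G(\psi_\Delta+s\upsilon_\Delta)$ directly. They then pass from Gurevich to ordinary pressure on $\Delta$ via the Fiebig--Fiebig--Yuri variational principle (Theorem~\ref{thm:FFY}), observe that all $f_\Delta$-invariant measures have positive Lyapunov exponent so $P=P_+$ on $\Delta$, and finally match $P_+(\psi_\Delta+s\upsilon_\Delta)$ with $P_+(\psi+s\upsilon)$ through Abramov and Lemma~\ref{lem:uniform scheme for nat}. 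You instead unfold the interior of Sarig's proof: introduce the two-parameter $G(s,p)=P_G(\Psi+s\Upsilon-p\tau)$ on the induced system, use positive discriminant and weak H\"older continuity to get analyticity of $G$ near $(0,0)$, compute $\partial_p G(0,0)=-\int\tau\,d\mu_\Psi\in(-\infty,0)$, and apply the implicit function theorem; then you identify the implicit solution $p(s)$ with $P_+(\psi+s\upsilon)$ using Abramov, Theorem~\ref{thm:BIP}(b), and uniform compatibility. This bypasses FFY entirely and is arguably closer to the mechanism, but it comes at a cost: the two-variable analyticity of $G$ is not what \cite[Theorem~4]{Saphase} literally asserts (that theorem gives analyticity of one-variable $s\mapsto P_G(\psi+s\upsilon)$ on the ambient shift), so citing \cite[\S6]{Saphase} for $G(s,p)$ as stated leaves a real gap that needs either a separate argument (analyticity of Gurevich pressure for linear families on a BIP shift with weakly H\"older potentials) or a more explicit reference to the relevant internal lemma of Sarig. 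You also need $\|\L_{\Psi+s\Upsilon-p\tau}1\|_\infty<\infty$ for small $(s,p)$ to invoke the variational principle of Theorem~\ref{thm:BIP}(b), which you do not verify; this follows from exponential tails and the hypothesis $P_G(\psi_\Delta+s\upsilon_\Delta)<\infty$ built into $Dir_F(\psi)$, but the check should be made.

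You correctly flag the most delicate step: extending the uniform compatibility of Lemma~\ref{lem:uniform scheme for nat} from the potentials $\phi_t$ to the perturbed family $\psi+s\upsilon$, and you correctly locate the role of $\sup_{\mu\in\M_+}|\int\upsilon\,d\mu|<\infty$. The paper's own treatment of this step is terse, but it does supply the estimate making the extension work (the perturbation changes the free energy by at most $|s|\sup_{\mu}|\int\upsilon\,d\mu|$, so near-optimal measures for $\psi+s\upsilon$ are also near-optimal for $\psi$ once $|s|$ is small). Your proposal names this gap but does not close it, so as written it is an outline of a correct but different proof rather than a complete one.
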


As noted before, the appropriately shifted potential $\phi_t
= -t\log|Df|$, gives rise to an equilibrium state with exponential tail for $t$ in a neighbourhood of 1 if \eqref{eq:CE} holds, and for $t \in (t_1, 1)$ if  \eqref{eq:CE} fails but \eqref{eq:polynomial} holds.
Take $\upsilon = -\log|Df|$. Any induced system provided in Section~\ref{sec:poly} is extendible, so by the Koebe lemma the induced potential $\Upsilon$ has summable variations, and in fact is weakly H\"older.  Similarly $(-\log|Df|)_\Delta$ satisfies \eqref{eq:FFY}.
Also, since $P_G(\psi_\Delta+s\upsilon_\Delta)\le P_G(\Psi+s\Upsilon)$ which is clearly bounded for small $s$, we have the $P_G(\psi_\Delta+s\upsilon_\Delta)<\infty$ for small $s$. Therefore there is an inducing scheme
with $\upsilon \in Dir_F(\psi)$. Thus Theorem~\ref{thm:tails and analytic} can be applied to give the analyticity of $t \mapsto P(\phi_t)$ for $t\in (t_1,0)$, to complete the proofs of Theorems~\ref{thm:poly} and \ref{thm:CE}.

\begin{proof}
Suppose that $\dm_F[\psi]>0$. This is equivalent to the existence of $0>\eps_0>p_F^\ast[\psi]$ \st $P_G(\Psi_{\eps_0})<\infty$.  By the Gibbs property, for $\eps > \eps_0$ we have $\mu_{\Psi_\eps}(\{\tau=n\}) \asymp \sum_{\tau_i=n}e^{\Psi_i-n\eps}$.  Then $$\mu_{\Psi_\eps}(\{\tau=n\}) \asymp e^{-n(\eps-\eps_0)}\sum_{\tau_i=n}e^{\Psi_i-n\eps_0}.$$
Notice that $$\sum_{\tau_i=n}e^{\Psi_i-n\eps_0} \asymp
\mu_{\Psi_{\eps_0}}(\{\tau=n\})< \mu_{\Psi_{\eps_0}}(X)=1,$$ so
$\mu_{\Psi_\eps}(\{\tau=n\}) <Ce^{-n(\eps-\eps_0)}$.  Since
$\eps-\eps_0>0$, $(X,F,\mu_{\Psi_\eps})$ has exponential tails.

Conversely, suppose that $(X,F,\mu_{\Psi})$ has exponential tails
with exponent $\alpha>0$, that is
$$
\sum_{\tau_i=n}e^{\Psi_i} \asymp \mu_{\Psi}(\{\tau=n\})<
Ce^{-n\alpha}.
$$
Then, for all $-\alpha<\eps_0$, and for $Z_0$ defined on
page~\pageref{page:Z_0},
$$
P_G(\Psi_{\eps_0})\le CZ_0(\Psi_{\eps_0}) \le
C\sum_n\sum_{\tau_i=n}e^{\Psi_i-n\eps_0}< C\sum_n
e^{-n(\alpha+\eps_0)}<\infty.
$$
Therefore $p_F^\ast[\psi]\le -\alpha<0$ and so $\dm_F[\psi]>0$.

For the second part of the theorem, we use the following result
from \cite[Theorem 4]{Saphase}.
\begin{theorem}
Let $(W,f)$ be a topologically mixing dynamical system and
$\psi:W\to (-\infty,\infty]$ be a potential satisfying \eqref{eq:FFY}, \st $P_G(\psi)<\infty$ and for $X\in \P_n$,
$\dm_F[\psi]>0$ and $\Psi$ is weakly H\"older continuous. Then
for all $\upsilon\in Dir_F(\psi)$ \st $\Upsilon$ is weakly H\"older
continuous, there exists $\eps>0$ \st $s \mapsto
P_G(\psi+s \upsilon)$ is real analytic on $(-\eps,\eps)$.
\label{thm:analytic}
\end{theorem}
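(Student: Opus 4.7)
The plan is to prove analyticity by transferring the question to the induced countable Markov system, where the Ruelle--Perron--Frobenius transfer operator admits a clean spectral analysis, and then by applying an implicit function argument to return to the base. First I would use that the inducing scheme $(X,F,\tau)$ is topologically mixing and full-branched (hence automatically BIP) and lift both $\psi$ and $\upsilon$ to their induced potentials $\Psi$ and $\Upsilon$, which are weakly Hölder by assumption. The central bridge, proved by a direct Abramov--Kac computation combined with the countable-Markov Variational Principle of Theorem~\ref{thm:BIP}, is the first-return identity
\[
P_G(\psi + s\upsilon) = T \iff P_G(\Psi + s\Upsilon - T\tau) = 0,
\]
valid for $(s,T)$ near $(0,T_0)$ with $T_0 := P_G(\psi)$. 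Proving the theorem therefore reduces to solving the right-hand equation for $T$ as an analytic function of $s$ near $s=0$.

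Next I would install the spectral picture at the base point. Set $\widetilde\Psi := \Psi - T_0\tau$, so $P_G(\widetilde\Psi)=0$. The existence of a Gibbs equilibrium state $\mu_{\widetilde\Psi}$ comes from Theorem~\ref{thm:BIP}, and the positive-discriminant hypothesis $\dm_F[\psi] > 0$ supplies some $\delta>0$ with $P_G(\widetilde\Psi + \delta\tau) < \infty$; by the tail computation already carried out in the proof of the first part of Theorem~\ref{thm:tails and analytic}, this yields exponentially decaying tails $\mu_{\widetilde\Psi}\{\tau = n\} \le C e^{-\delta n}$, and in particular $\int \tau\, d\mu_{\widetilde\Psi} < \infty$. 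On a suitable Banach space of locally Hölder densities on the induced Markov shift, Sarig's RPF theory for weakly Hölder potentials with BIP then gives that $\mathcal{L}_{\widetilde\Psi}$ is quasi-compact with a simple isolated leading eigenvalue $\lambda_0 = 1$ and a spectral gap.

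I would then perturb. The two-parameter family
\[
\mathcal{L}_{T,s} := \mathcal{L}_{\Psi + s\Upsilon - T\tau}
\]
is real-analytic as a map into bounded operators on the chosen Banach space: the hypotheses $\upsilon \in Dir_F(\psi)$ (which in particular guarantees $P_G(\psi_\Delta + s\upsilon_\Delta)<\infty$ for small $s$), $\sum_n V_n(\Upsilon) < \infty$, and weak Hölder control of $\Upsilon$ and of $\tau$-weighted perturbations, together ensure the operator-norm analyticity needed for Kato's analytic perturbation theorem. Kato's theorem then produces an analytic $\lambda(s,T)$ on a bidisk around $(0,T_0)$, with $\log\lambda(s,T) = P_G(\Psi + s\Upsilon - T\tau)$. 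Differentiating at the base point via Ruelle's formula yields
\[
\left.\partial_T \log\lambda(s,T)\right|_{(0,T_0)} = -\int \tau\, d\mu_{\widetilde\Psi} \in (-\infty,0),
\]
where finiteness and strict negativity come from Step~2. The implicit function theorem in the analytic category then produces an analytic $T = T(s)$ with $T(0)=T_0$ solving $\lambda(s,T)=1$ on some $(-\eps,\eps)$, and the first-return identity gives $T(s) = P_G(\psi + s\upsilon)$, completing the proof.

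The main obstacle, and the point where $\dm_F[\psi]>0$ earns its keep, is Step~2: extracting genuine quasi-compactness and a spectral gap for $\mathcal{L}_{\widetilde\Psi}$ on a space simultaneously rich enough to carry the perturbations $s\Upsilon - (T{-}T_0)\tau$ and narrow enough for Kato's machinery to apply, together with ensuring $\int\tau\,d\mu_{\widetilde\Psi}<\infty$ so that the critical derivative $\partial_T\log\lambda$ neither vanishes nor is infinite. Without the positive discriminant, this derivative can collapse and the implicit function step breaks down — precisely the phase-transition scenario one must exclude. The secondary technical effort is verifying that $(s,T)\mapsto \mathcal{L}_{T,s}$ is operator-norm analytic (not merely continuous), which requires the weakly Hölder summability of $\Upsilon$ and $\tau$ that is built into the $Dir_F(\psi)$ hypothesis.
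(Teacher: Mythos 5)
The paper does not prove Theorem~\ref{thm:analytic} at all: it is quoted verbatim as Theorem~4 of Sarig \cite{Saphase}, and the authors simply invoke it. Your argument is therefore being compared against Sarig's original proof, and it does reconstruct it faithfully: the reduction $P_G(\psi+s\upsilon)=T \Longleftrightarrow P_G(\Psi+s\Upsilon-T\tau)=0$ via the first-return system, the use of $\dm_F[\psi]>0$ to get exponential tails and hence $\int\tau\,d\mu_{\widetilde\Psi}<\infty$, the spectral gap for the RPF operator on a locally-\hold space under BIP, and the analytic implicit function theorem with nondegeneracy supplied by $\partial_T\log\lambda = -\int\tau\,d\mu_{\widetilde\Psi}\ne 0$. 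These are exactly the ingredients of Sarig's proof (his ``Discriminant Theorem'' plus analytic perturbation of a simple isolated eigenvalue), so the approach is essentially the same; you have only spelled out in more detail the two points Sarig compresses, namely the operator-norm analyticity of $(s,T)\mapsto \L_{\Psi+s\Upsilon-T\tau}$ and the role of the positive discriminant in preventing the derivative in $T$ from degenerating.

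One small caution worth flagging if this proof were to be written out in full: operator-norm analyticity of $T\mapsto\L_{\Psi-T\tau}$ on the standard Banach space of locally \hold functions is not free, because $\tau$ is unbounded. The standard fix (which is also where $p_F^\ast[\psi]<0$ is genuinely used) is that the series defining the transfer operator and its $T$-derivatives converge uniformly on a strip $\{T:\mathrm{Re}\,T>p_F^\ast[\psi]+\eta\}$, since every term carries a factor $e^{-T\tau_i}$ and $\sum_i \tau_i^k e^{\sup\Psi|_{X_i}-T\tau_i}<\infty$ there. You gesture at this correctly, but it is the one step where a careless choice of function space would break the argument, and Sarig's proof is organized precisely to make this convergence transparent.
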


We can use this to show that $s \mapsto P_G(\psi+s\upsilon)$ is
analytic. However, to go from the Gurevich pressure to the usual
pressure, we need a Variational Principle.  Sarig's theory
provides various conditions on potentials which yield a
Variational Principle, but they are somewhat restrictive, and in
particular for our case, are not satisfied by the potential
$-t\log|Df|$.  One aim of \cite{FFY} is to weaken these
conditions. There, the following theorem is proved.

\begin{theorem}
If $(W,S)$ be a transitive Markov shift and $\psi:W\to \R$ is a continuous function satisfying \eqref{eq:FFY}, then $P_G(\psi)=P(\psi)$.\label{thm:FFY}
\end{theorem}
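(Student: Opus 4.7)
The plan is to prove both inequalities separately, by reducing to the variational principle on compact Markov subshifts and then transferring back.

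For the easier direction $P(\psi) \le P_G(\psi)$, fix any ergodic invariant probability $\nu$ on $W$ with defined free energy. Using transitivity of $W$, I would exhaust $W$ by a nested sequence of topologically transitive compact (finite-state) Markov subshifts $W_N \subset W$, built for instance from the loops through a fixed recurrent state $a$ of length at most $N$. On each compact $W_N$, $\psi|_{W_N}$ is continuous and the classical variational principle applies, so $P_G(\psi|_{W_N}) = P(\psi|_{W_N})$; moreover $P_G(\psi|_{W_N}) \le P_G(\psi)$ since periodic orbits of $W_N$ form a sub-collection of those counted in $Z_n(\psi, X_i)$. I would then approximate $\nu$ by $\sigma$-invariant probabilities $\nu_N$ supported on $W_N$ (via Rokhlin towers or a standard conditioning argument), deducing $h_{\nu_N}(\sigma)+\int\psi\,d\nu_N \to h_\nu(\sigma)+\int\psi\,d\nu$. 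Condition \eqref{eq:FFY} gives uniform distortion control of the Birkhoff sums, preventing the integral from jumping in the limit, so altogether $h_\nu(\sigma) + \int\psi\,d\nu \le P_G(\psi)$.

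For the reverse inequality $P_G(\psi) \le P(\psi)$, I would construct an invariant measure realising $P_G(\psi)$ from periodic-orbit sums. Fix a state $a$ and a subsequence $n_k \to \infty$ realising the $\limsup$ in \eqref{eq:Gur}. For each $n = n_k$ and each $n$-cylinder $C$ containing a period-$n$ point returning to $a$, choose a representative $x_C$ and form the orbit-averaged measure
\[
\nu_n \ := \ \frac{1}{n\, Z_n(\psi,X_i)} \sum_{C} e^{\psi_n(x_C)} \sum_{j=0}^{n-1} \delta_{\sigma^j x_C},
\]
which is automatically $\sigma$-invariant. Condition \eqref{eq:FFY} lets me replace each $e^{\psi_n(x_C)}$ by $\sup_C e^{\psi_n}$ up to a subexponential factor $e^{o(n)}$, after which a Misiurewicz-style computation relative to the partition $\Q_n$ gives
\[
H_{\nu_n}(\Q_n) + n\!\int\!\psi\, d\nu_n \ \ge\ \log Z_n(\psi,X_i) - o(n).
\]
Dividing by $n$ and extracting a weak$^*$ accumulation point $\mu$ of $\{\nu_{n_k}\}$, affinity of the integral together with upper semicontinuity of the partition entropy (for cylindrical partitions) yields $h_\mu(\sigma)+\int\psi\, d\mu \ge P_G(\psi)$.

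The hard part will be \emph{escape of mass}: on the noncompact space $W$, weak$^*$ limits of probability measures can lose total mass to infinity, which would strictly decrease both the entropy and the integral and destroy the lower bound in the previous paragraph. I would circumvent this by carrying out the periodic-orbit construction inside the compact exhausting subshifts $W_N$ from the first paragraph, using a diagonal argument with $N = N(n) \to \infty$ sufficiently slowly that the truncated periodic-orbit sum still captures a positive fraction of $\log Z_n(\psi,X_i)/n$ in the limit; topological mixing of $W$ guarantees that each $W_N$ contains enough admissible closed loops to sustain this. The indispensable role of \eqref{eq:FFY} throughout is to convert periodic-point sums into cylinder-supremum sums with only a subexponential correction, which is precisely what allows the compactification and limiting arguments to commute with $n \to \infty$.
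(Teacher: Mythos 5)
The statement you are trying to prove is not actually proved in this paper: the authors explicitly cite it as a result of Fiebig, Fiebig and Yuri \cite{FFY} (``There, the following theorem is proved'') and use it as a black box. So there is no in-paper proof to compare against, and the fair question is whether your sketch would stand on its own.

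The general shape is sensible --- exhausting by compact transitive Markov subshifts for one inequality, a Misiurewicz-style periodic-orbit argument for the other --- but the treatment of non-compactness is the genuine hard part, and you do not resolve it. Several distinct issues are being collapsed into the phrase ``escape of mass.'' First, even if the weak$^*$ limit $\mu$ of your periodic-orbit measures $\nu_n$ has total mass $1$, the convergence $\int\psi\,d\nu_n \to \int\psi\,d\mu$ is \emph{not} automatic, since $\psi$ is only assumed continuous and need not be bounded on the non-compact space $W$; you need some form of tightness/uniform integrability of $\psi$ under $\{\nu_n\}$ that the hypotheses of the theorem do not hand you for free. Second, the upper semicontinuity of the specific-entropy term $\tfrac1n H_{\nu_n}(\Q_n)$ under weak$^*$ convergence relies on the partitions $\Q_n$ having negligible boundary and, in the countable-state setting, on controlling the entropy contribution of the infinitely many cylinders near infinity; neither is addressed. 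Third, the proposed fix --- constructing $\nu_n$ inside compact $W_{N(n)}$ with $N(n)\to\infty$ slowly --- does not by itself produce tightness: the sequence $(\nu_n)$ still lives on a non-compact space, and there is no a priori reason the mass of $\nu_n$ carried by $W_{N_0}$ stays bounded below as $n\to\infty$ for a fixed $N_0$. To make this work you would need to show that the contribution to $Z_n(\psi,X_i)$ from periodic orbits that spend a positive fraction of their period outside $W_{N_0}$ is exponentially negligible as $N_0\to\infty$, uniformly in $n$; that is a substantive additional estimate (and is essentially the content that makes \cite{FFY} a paper rather than a remark). As it stands, the second half of your argument asserts the conclusion you need rather than deriving it.

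On the easier direction $P(\psi)\le P_G(\psi)$, the plan is closer to complete, but ``approximate $\nu$ by $\sigma$-invariant probabilities $\nu_N$ supported on $W_N$ ... deducing $h_{\nu_N}+\int\psi\,d\nu_N\to h_\nu+\int\psi\,d\nu$'' again hides real work in the countable-alphabet, unbounded-$\psi$ setting: entropy can drop under restriction and $\int\psi\,d\nu_N$ can fail to converge to $\int\psi\,d\nu$ if $\psi$ is unbounded on the complement of $W_N$. A cleaner route for this direction is the Katok-type argument directly relating $h_\nu$ to counts of $(n,\epsilon)$-separated points of high $\psi_n$-weight through a recurrent state, which condition \eqref{eq:FFY} is designed to feed into; this sidesteps the approximation-by-compact-measures step entirely.

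In summary: the proposal is a reasonable reconstruction of the outline, but both non-compactness steps (mass escape, integral convergence, entropy semicontinuity) are gaps that the stated ``diagonal argument'' does not close, and the paper itself offers no proof to compare against since it cites \cite{FFY}.
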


We now apply Theorem~\ref{thm:analytic} to the symbolic space induced by $(\Delta,f_\Delta)$.  In this space, the potential
$(-t\log|Df|-S')_\Delta$ satisfies \eqref{eq:FFY} and is continuous in the symbolic metric.
Theorem~\ref{thm:analytic} implies
that there is $\eps' > 0$ \st $s \mapsto P_G(\psi_\Delta+s \upsilon_\Delta)$
is analytic on $(-\eps',\eps')$.
Thus, by Theorem~\ref{thm:FFY}, $s\mapsto P(\psi_\Delta+s\upsilon_\Delta)$ is also
analytic on $(-\eps',\eps')$.

All $f_\Delta$-invariant probability measures $\nu$ have positive
Lyapunov exponents.  This is because the induced map $(X,F)$ (which is isomorphic to the first return map to $\Delta_0$) is uniformly expanding and the Ergodic  Theorem gives
\[
\lambda(\nu) :=  \int \log|Df_\Delta| \ d\nu
= \nu(\Delta_0) \int \log|DF_\Delta| \ d\nu \ge
\nu(\Delta_0) \inf_x \log |DF(x)| > 0.
\]
Therefore $P(\psi_\Delta+s\upsilon_\Delta) =
P_+(\psi_\Delta+s\upsilon_\Delta)$ for $s \in (-\eps',\eps')$.

Since the inducing scheme $(X,F)$ is obtained from both $(I,f)$ and
$(\Delta, f_\Delta)$ with the same inducing time $\tau = \tau_\Delta$,
Lemma~\ref{lem:Abra} implies that
\[
h_{\mu_\Delta}(f_\Delta) = \left( \int \tau d\mu_F \right)^{-1} h_{\mu_F}(F) =
h_\mu(f)
\]
and
\[
\mu_\Delta(\phi_\Delta) = \left( \int \tau d\mu_F \right)^{-1} \mu_F(\Phi) =
\mu(\phi),
\]
whenever $\mu_\Delta$ and $\mu_F$ are the induced measures of
$\mu$ to $(\Delta, f_\Delta)$ and $(X,F)$ respectively, and
$\phi$ is any potential.  Thus the free energy of $\mu$ and the lifted version $\mu_\Delta$ are the same.  This implies that  $s \mapsto P_G(\psi+s \upsilon)$
is analytic on $(-\eps',\eps')$ if the definition of pressure
involved only those measures which lift to $\Delta$.
Moreover, $P_+(\psi_\Delta+s\upsilon_\Delta)\le P_+(\psi+s\upsilon)$
for $s \in (-\eps', \eps')$.

It remains to prove that there exists $\eps>0$ so that for all $s\in
(-\eps, \eps)$, $P_+(\psi_\Delta+s\upsilon_\Delta) \ge
P_+(\psi+s\upsilon)$.  The issue is that in principle there might be
measures which have high free energy but do not lift to $\Delta$.
We show how Lemma~\ref{lem:uniform scheme for nat} implies that this is impossible, thus completing the theorem.  Since by
assumption
$\sup_{\mu\in \M_+}\left|\int\upsilon~d\mu\right|<\infty$,
$P_+(\psi+\eps\upsilon) \to P_+(\psi)=0$ as $\eps\to 0$.  Therefore
there exists $0<\eps<\eps'$ so that for any $s\in (-\eps,\eps)$,
we have $P_+(\psi+s\upsilon)>-\frac{\eps_0}2$.  Hence for all $s\in
(-\eps,\eps)$, if a measure $\mu$ has
$h_\mu(f)+\int\psi+s\upsilon~d\mu>P_+(\psi+s\upsilon)-\frac{\eps_0}2$ then Lemma~\ref{lem:uniform scheme for nat} implies
$\hat\mu(\hat X)>0$.  Hence
$P_+(\psi_\Delta+s\upsilon_\Delta) \ge P_+(\psi+s\upsilon)$.
Therefore $P_+(\psi_\Delta + s\upsilon_\Delta) = P_+(\psi+s\upsilon)$,
and the analyticity of
$s \mapsto P_+(\psi + s \upsilon)$ on $(-\eps,\eps)$ follows.
\end{proof}

It would be a further step to say that $t \mapsto \mu_{\phi_t}$ is analytic (where $\mu_{\phi_t}$ indicates the equilibrium state of $\phi_t$).  Using the weak topology we can ask whether $t \mapsto \int g \ d\mu_{\phi_t}$
is analytic for any fixed continuous function $g$.
We do have the following corollary:
\begin{corollary}
In the setting of Theorems~\ref{thm:poly} and \ref{thm:CE}, let $(X,F,\tau)$ be any inducing scheme as in Section~\ref{sec:induce}.
Fix $s \in (t_1,1)$ or $s$ in a small neighbourhood of 1, according to whether
\eqref{eq:polynomial} or \eqref{eq:CE} holds.  Take $\psi_t = \phi_t-P_+(\phi_s)$ for $\phi_t = -t\log|Df|$, and let $\Phi_t$ the induced potential. Then the function $t \mapsto \int_X \tau d\mu_{\Psi_t}$ is analytic for $t$ sufficiently close to $s$, where $\mu_{\Psi_t}$ denotes the equilibrium state of $\Psi_t$.
\end{corollary}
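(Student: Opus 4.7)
My plan is to realize $\int_X\tau\,d\mu_{\Psi_t}$ as the $u$-partial derivative at $u=0$ of a jointly real-analytic two-parameter Gurevich pressure
\[
Q(t,u):=P_G(\Psi_t+u\tau)=P_G\bigl(-t\log|DF|+(u-P_+(\phi_s))\tau\bigr)
\]
on the countable Markov shift $(X,F)$.  Once $Q$ is shown to be jointly real-analytic on a neighbourhood of $(s,0)$, Ruelle's standard derivative formula for the Gurevich pressure gives
\[
\frac{\partial Q}{\partial u}(t,0)=\int_X\tau\,d\mu_{\Psi_t},
\]
and real-analyticity in $t$ follows at once, because the partial derivative of a jointly real-analytic function is itself real-analytic.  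The integrability of $\tau$ against $\mu_{\Psi_t}$ needed to justify the derivative formula is precisely the exponential-tails statement, which persists under small perturbations of $t$ by continuity of the discriminant.

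The joint analyticity of $Q$ rests on the spectral-perturbation framework underlying Sarig's Theorem~\ref{thm:analytic}.  At the base point $(s,0)$ both structural hypotheses are in place: $\Psi_s$ is weakly H\"older by Lemma~\ref{lem:sum var}, and the discriminant $\dm_F[\Psi_s]$ is strictly positive because $(X,F,\mu_{\Psi_s})$ has exponential tails by Proposition~\ref{prop:summable} together with the proof of Theorem~\ref{thm:CE}.  Both perturbation directions are weakly H\"older: $\log|DF|$ by the Koebe argument of Lemma~\ref{lem:sum var}, and $\tau$ trivially since $\tau$ is constant on each inducing domain $X_i$, so $V_n(\tau)=0$ for every $n\ge1$.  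Consequently the transfer operator $\L_{\Psi_s}$ is quasi-compact on a suitable Banach space of weakly H\"older functions, with a simple isolated leading eigenvalue $e^{P_G(\Psi_s)}$, and the family $(t,u)\mapsto\L_{\Psi_t+u\tau}$ depends analytically on $(t,u)$; Kato's perturbation theorem then yields joint real-analyticity of the leading eigenvalue, and hence of $Q$, on a bidisc about $(s,0)$.

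The main technical obstacle is the step from the one-parameter analyticity supplied by Theorem~\ref{thm:analytic} to the joint two-parameter version needed here.  I would handle this either by invoking the Kato spectral-perturbation machinery on the transfer operator directly, as sketched above, or by deducing joint real-analyticity from two separate one-parameter analyticities via complexification together with Hartogs' theorem for real-analytic functions.  Either route requires some care in choosing the function-space setting on $(X,F)$ in which $\L_{\Psi_s}$ is quasi-compact, but the exponential tails provide exactly the spectral gap that makes the perturbation theory apply.
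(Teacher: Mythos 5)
Your proof plan is plausible but takes a genuinely different — and substantially heavier — route than the paper's. The paper's argument is a two-line application of Abramov's formula (Lemma~\ref{lem:Abra}): for the equilibrium state $\mu_{\Psi_t}$ of the induced potential one has
$P(\Psi_t)=\bigl(\int_X\tau\,d\mu_{\Psi_t}\bigr)\,P_+(\psi_t)$,
where $P_+(\psi_t)=P_+(\phi_t)-P_+(\phi_s)$. Both $t\mapsto P(\Psi_t)$ and $t\mapsto P_+(\psi_t)$ were already shown to be analytic in the proof of Theorem~\ref{thm:tails and analytic}, and since $\frac{d}{dt}P_+(\phi_t)=-\lambda(\mu_{\phi_t})<0$, the denominator has only a simple zero at $t=s$, matched by the zero of $P(\Psi_t)$ there; the quotient therefore extends analytically and equals $\int_X\tau\,d\mu_{\Psi_t}$. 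No new spectral-theoretic input is needed — the analyticity has already been paid for. Your approach instead realizes $\int_X\tau\,d\mu_{\Psi_t}$ as the $u$-derivative of a two-parameter Gurevich pressure and derives analyticity from Kato-type perturbation of the transfer operator. This is a reasonable strategy, but its central burden — joint real-analyticity of $(t,u)\mapsto P_G(\Psi_t+u\tau)$ — is not actually established; you correctly flag it as the main obstacle and propose two resolutions, but both require constructing the Banach space on which $\L_{\Psi_t+u\tau}$ is quasi-compact and proving analyticity of the operator family in two complex variables, a genuine block of work that the paper sidesteps entirely. Also, be careful with the fallback through "Hartogs' theorem for real-analytic functions": separate real-analyticity does \emph{not} imply joint real-analyticity, so that route must in fact go through complexification and honest Hartogs for holomorphic functions, which again requires the spectral machinery. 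In short: your plan can be made to work, but it proves a strictly stronger statement (joint two-parameter analyticity) at considerable extra cost, whereas the intended proof extracts the one-variable analyticity directly from Abramov's formula and results already in hand.
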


\begin{proof}
We know that $t \mapsto P_+(\psi_t)$ and $t \mapsto P(\Psi_t)$
are analytic.  By Lemma~\ref{lem:Abra}, $P(\Psi_t) = \left( \int \tau d\mu_{\Psi_t} \right) P_+(\phi_t)$, so analyticity of $t \mapsto \int \tau d\mu_{\Psi_t}$ follows.
\end{proof}

\section{Concerning the Hypotheses of Theorems~\ref{thm:poly} and
\ref{thm:CE}} \label{sec:hypo}

In this section, we argue that the hypotheses of
Theorems~\ref{thm:poly} and ~\ref{thm:CE} cannot
easily be relaxed. We also discuss some consequences of our
proofs.

{\bf The set \boldmath$\M_+$\unboldmath}:\label{nonM+}
The question how large the set
$\M_+$ is in comparison to $\M_{erg}$ is answered by Hofbauer and
Keller \cite{HKquad} in certain contexts. For unimodal maps, they
prove that any
measure $\mu \in \M_{erg} \setminus \M_+$ has entropy $0$ and
belongs to the convex hull of the set of weak accumulation points
of $\{ \frac1n \sum_{k=0}^{n-1} \delta_{f^k(c)} \}_{n \in \N}$,
where $\delta_{f^k(c)}$ indicates the Dirac measure at the $k$-th
image of the critical point. If we restrict to the potential
$\phi_t = - t  \log |Df|$ at $t = 1$, then the following examples
can be given:
\begin{itemize}
\item If $f$ has a neutral fixed point, then the Dirac measure at this
fixed point is an equilibrium state.
\item There is a quadratic map without equilibrium measure for $\phi_1$, see \cite{BrK}. In this case, the summability condition \eqref{eq:summable} fails.
\item  For maps such as the Fibonacci map (which satisfies \eqref{eq:polynomial} for $\ell=2$), there is only one
measure in $\M_{erg} \setminus \M_+$, namely the unique invariant
probability measure $\mu_{\omega(c)}$ supported on the critical
omega-limit set $\omega(c)$. This gives rise to a phase
transition for the pressure function $t \mapsto P(\phi_t)$ at $t =
1$. The quadratic Fibonacci map has two equilibrium states for
$\phi_1$: an absolutely continuous probability measure and
$\mu_{\omega(c)}$.

Moreover, there is a sequence of periodic points $p_n$
with Lyapunov exponents $\lambda(p_n) \searrow 0$ as $n \to \infty$, see
\cite{NSa}.
The equidistributions on $\orb(p_n)$ belong to $\M_+$, which shows that
$P_+(\phi_t) = 0$ for $t \ge 1$, but $\M_+$ contains no equilibrium states
if $t > 1$.
See \cite{BrK} for more information on the
phase transition.
\item It is also possible that $\M_{erg} \setminus \M_+$ contains
several equilibrium states, all supported on $\omega(c)$. In
\cite{Brminim} an example is given where $\omega(c)$ supports at least
two ergodic measures, while there is also an acip, as follows from
\cite[Theorem A (c)]{Brabsorb}.
\end{itemize}

{\bf Differentiability of the map \boldmath$f$\unboldmath:}\label{raith} A
$C^{1+\eps}$ assumption is necessary in order to use the result
that $\lambda(\mu) > 0$ implies liftability.  This result, proved
in \cite{Kellift}, relies on the property that $\mu$-typical
points have nondegenerate unstable manifolds, see \cite{Ledrap}.
If $f$ is only piecewise continuous, this property as well as
liftability no longer hold; this is illustrated by an example due to
Raith \cite{Raith}, see the left-hand graph in Figure~\ref{fig:raith}.
This is piecewise continuous map $f$ with slope $2$, having a
zero-dimensional set $H$ on which $f$ is semiconjugate to a circle rotation.  The unique $f$-invariant measure $\mu$ of $(H,f)$ has
$\lambda(\mu) = \log 2 > 0$, but cannot be lifted to the Hofbauer tower, described in Section~\ref{sec:induce}.   This follows since it can be shown that for each $x\in H$ and $\hat x\in \pi^{-1}(x)$, $\hat f^n(\hat x)$ belongs to a domain $D_n\in \D$ and $\lim_{n\to\infty}|D_n|\to 0$.  As shown in the graph on the right of Figure~\ref{fig:raith}, is easy to adjust this example into a
continuous map with slope $\pm 2$, but this map is not differentiable at the turning points. Another part where $C^2$ differentiability is used is Ma\~n\'e's Theorem in the proof of Proposition~\ref{prop:summable}.

\begin{figure}[ht]
\begin{center}
\begin{minipage}{120mm}
\unitlength=7mm
\begin{picture}(22,10)(1.5,0)
\let\ts\textstyle

\put(-0.4,2.1){\scriptsize $\frac12-2\alpha$}
\put(4,0.5){\scriptsize $\frac14 + \alpha$}
\put(9.7,3.25){\scriptsize $1-4\alpha$} \put(17,0.5){\scriptsize
$\frac12 + 2\alpha$}

\thinlines \put(1,1){\line(1,0){8}} \put(1,5){\line(1,0){8}}
\put(1,9){\line(1,0){8}} \put(9,1){\line(0,1){8}}
\put(1,1){\line(0,1){8}} \put(5,1){\line(0,1){8}}

\thicklines \put(5,5){\line(1,2){2}} \put(7,5){\line(1,2){2}}

\put(1,2.2){\line(1,2){1.4}}\put(1,2.23){\line(1,2){1.37}}
\put(1,2.17){\line(1,2){1.4}}
 \put(4.4,1){\line(1,2){0.6}}\put(4.4,1.03){\line(1,2){0.58}}
\put(4.4,0.97){\line(1,2){0.6}} \put(3.4,5){\line(1,2){1}}
\put(2.4,7){\line(1,2){1}} \put(7,5){\line(1,2){2}}

\thinlines \put(11,1){\line(1,0){8}}
\put(11,9){\line(1,0){8}} \put(19,1){\line(0,1){8}}
\put(11,1){\line(0,1){8}}

\thicklines

\put(11,3.4){\line(1,2){2.8}} \put(11,3.43){\line(1,2){2.78}}
\put(11,3.37){\line(1,2){2.82}} \put(17.8,1){\line(1,2){1.2}}
\put(17.8,1.03){\line(1,2){1.17}} \put(17.8,0.97){\line(1,2){1.2}}
\put(13.8,9){\line(1,-2){4}}
\end{picture}
\caption{{\bf Left}: Raith's example. For specific choices of
$\alpha$, the points whose orbits stay in the domains of branches 1
and 4 (bold lines) for ever form a zero-dimensional Cantor set
$H$ on which $f$ is semi-conjugate to a circle rotation.
\newline {\bf Right}: Rescaling the left bottom square and inserting a new branch gives a continuous example.  Again the set of points whose orbits stay in the domains branches 1 and 3 (bold lines) for ever form a zero-dimensional Cantor set $H$ on which $f$ is semi-conjugate to a circle rotation.}\label{fig:raith}
\end{minipage}
\vskip-20pt
\end{center}
\end{figure}

{\bf Measures with \boldmath$\mbox{supp}(\mu) \subset \mbox{orb}(\mbox{Crit})$\unboldmath:}
\label{masm}
Makarov and Smirnov \cite{MSm1,MSm2} discuss specific polynomials $f$ on
the complex plane for which there is a phase transition for the
potential $\phi_t = -t \log|Df|$ at some $t < 0$, and consequently
these example would contradict our main theorem. The reason for
this is that the Julia set $J(f)$ has `very exposed' fixed points
on which the Dirac measures can become equilibrium states for $t$
sufficiently small. In the interval setting this applies to the
Chebyshev polynomials $f:[0,1] \to [0,1]$ of any degree $d \ge 2$.
The set $\{ 0, 1\}$ consists of the critically accessible points; each
critical point is prefixed, and either (a) $0 = f(0) = f(1) =
f^2(\Crit)$; or (b) $0 = f(0)$, $f(1) = 1$ and $0$ and $1$ are both
critical values of critical points. The critical
accessibility creates an obstruction in our strategy of finding an
induced scheme in Section~\ref{sec:induce}.
Further results on phase transitions for $t > 1$ are given in
\cite{MSm3}.

{\bf The Gibbs property:}\label{gibbs} Although the equilibrium
states obtained in $\M_+$ (i.e., for the original system) are
positive on open sets, we cannot expect them to be Gibbs. First,
if $\phi = -\log|Df|$, then $\phi$ is unbounded near critical
points, so it is impossible to have $e^{\phi_n(x) - nP(\phi)} \le
K \mu(\c_n[x])$ uniformly in $x$. But also if the number $K$ is
allowed to depend on $x$, measures cannot always satisfy this
weaker form of the Gibbs property. For example, if $f(x) =
ax(1-x)$ has an acip $\mu$, and the potential is $\phi = -\log
|Df|$, then the pressure $P(\phi) = 0$ and it is well known that
$\frac{d\mu}{dx} > \rho_0 > 0$ on a neighbourhood of $c$. Suppose by
contradiction that for each $x \notin \cup_{n \in \Z} f^n(c)$,
there exists $K = K(x)$ \st
\[
\frac{1}{K} \le \frac{\mu(\c_n[x])}{e^{\phi_n(x)}} \le K \mbox{
for each }  n \ge 0.
\]
Now $\mu$-a.e. $x$ has an orbit accumulating on $c$, so almost
surely there exists $n$ such $|f^n(x) - c| < \frac1{4 K^2}$. But
then
\begin{align*}
\mu(\c_{n+1}[x]) &\ge \frac1{K} e^{\phi_{n+1}(x)} =  \frac1{K}
e^{\phi_{n}(x)} \frac{1}{|Df^n(x)|} \ge \frac{1}{K^2} \mu(\c_n[x])
\frac{4 K^2}{2} \ge 2\mu(\c_n[x]),
\end{align*}
which contradicts that $\c_{n+1}[x] \subset \c_n[x]$. Thus $\mu$
cannot be a Gibbs measure.

In some cases, a weak Gibbs property can be proved. For example,
it was shown in \cite{BrV} that for unimodal maps with critical
order $\ell$ satisfying a summability condition, and every $\eps > 0$,
there exists $K = K(x)$ for Lebesgue a.e. $x$ \st
\[
\frac{1}{K n^{3(\ell+1)} } \le
\frac{\mu_\phi(\c_n[x])}{e^{\phi_n(x)} } \le K
n^{2(1+\eps)}.
\]

\section*{Appendix}

In this appendix we give the two remaining proofs.
The first is a lemma on the structure of the Hofbauer tower.

\begin{proof}[Proof of Lemma~\ref{lem:trans_subgraph}]
We start with case (a), so $\Omega$ is a finite union of intervals.
Let $x \in \Omega$ be any point with a dense orbit
in $\Omega$. Suppose that
$(\E, \to)$ is a maximal primitive subgraph that is not closed,
then for any $\hat x \in \pi^{-1}(x) \cap D_0$ for some $D_0 \in \E$,
$\orb(\hat x)$ leaves $\E$, i.e. $\hat f^k(\hat x)\notin \E$ for $k$ sufficiently large.  Indeed, since $\E$ is not closed, there is $D \in \E$ and $D' \notin \E$ \st $D \to D'$. There is an $n$-path $D_0 \to \dots \to D$ for arbitrarily large $n$, corresponding to sets $\hat\c_n \in \hat \P_n$. Each
$\hat \c_n$ has an $n+1$-subcylinder $\hat \c_{n+1}$ corresponding
to the $n+1$-path $D_0 \to \dots \to D \to D'$. For $n$
sufficiently large, $\hat \c_{n+1}$ is compactly contained in $D$.
Since $\orb(x)$ is dense in $\Omega$, there is $m$ \st $f^m(x)
\in \pi(\hat \c_{n+1})$. Therefore $\hat f^m(\hat x) \in \pi^{-1}
\circ \pi(\hat \c_{n+1})$ and $\hat f^{m+n+1}(\hat x) \in D''$ for
some domain \st $\pi(D'') \subset \pi(D')$. Regardless of
whether $D'' = D'$ or not, there is no path from $D''$ back into
$\E$, because if there was, there would be a path from $D'$ back
into $\E$, contradicting maximality of $\E$.

Consequently, $\orb(\hat x)$ will leave every maximal primitive subgraph
that is not closed.  If there is a closed primitive subgraph $(\E,
\to)$, then it is unique, $\hat f^k(\hat x)\in \E$ for all
sufficiently large $k$ and necessarily $\pi(\cup_{D \in \E} D) \supset
\Omega$. Let us also show that there is $\hat y$ with a dense orbit in $\E$.
Fix $D_0 \in \E$ and let $U_n$ be a countable base
of $\sqcup_{D \in \E} D$. Each $U_n$ intersects some $D$
and $U_n$ contains an $r_n$-cylinder $\hat\c_{r_n}\in \hat\P_n$ which itself is contained in $D$.
Since $\E$ is primitive, there is a path $D_0 \to \dots \to D$ of length $l_n$ and another path $D \to \dots \to D_0$ of length $l'_n \ge r_n$ \st if $\hat z \in D$ takes this
path, then $\hat z \in \hat\c_{r_n}$.   Let $p_n:=l_n+l_n'$.  Because $(\E, \to)$ is a Markov graph, for each $n\ge 1$ we have a cylinder $\hat\c_{p_n}\subset D_0$ \st $\hat f^{l_n}(\hat\c_{p_n})\subset \hat\c_{r_n}\subset U_n$ and $\hat f^{p_n}(\hat\c_{p_n})=D_0$.

Let $q_0=0$ and $q_n:=\sum_{k=1}^{n}p_k$. Let $\hat\c_{q_1}=\hat\c_{p_1}$.  By the Markov structure, we can pull back inductively to obtain a nested sequence of cylinder sets $\hat\c_{q_n}\subset \cdots \subset \hat\c_{q_1} \subset D_0$ with $\hat f^{q_{n}+l_{n+1}}(\hat \c_{q_{n+1}}) \subset U_{n+1}$ and  $\hat f^{q_{n}}(\hat \c_{q_{n+1}}) = \hat\c_{p_{n+1}}$ for all $n\ge 0$.  The point $\hat y\in \bigcap_n\hat\c_{q_n}$ has a dense orbit in $\E$.  In this case the lemma is proved.

Alternatively, suppose that no closed primitive subgraph exists.
Abbreviate $\hat \Omega_R := \pi^{-1}(\Omega) \cap \hat I_R$.
If $\# (\orb(\hat x) \cap \hat \Omega_R) =
\infty$ for some $R$, then $\# (\orb(\hat x) \cap D) = \infty$ for
some $D \subset \hat \Omega_R$, and $\hat f^k(\hat x)$ is in the non-empty
maximal primitive subgraph containing $D$, for all sufficiently
large $k$. The above argument shows that this subgraph is closed
as well, so we would be in the previous case after all.

Therefore $\orb(\hat x)$ has a finite intersection with every compact
subset of $\hat I$.
We will show that this contradicts $\orb(x)$ being dense in $I$,
by showing that $\orb(x)$ cannot accumulate on an orientation
reversing fixed point $p$, leaving the (very similar) argument
where $p$ is orientation preserving and/or where $p$ has a higher
period to the reader.

Assume (for the moment) that all critical points are turning
points (and not inflection points).
\begin{figure}[ht]
\begin{center}
\begin{minipage}{120mm}
\unitlength=7mm
\begin{picture}(22,5)(0,0)
\let\ts\textstyle

\thicklines \put(4,3){\line(1,0){6}}

\put(7,2){\line(1,0){0.5}}
\put(7,1.97){\line(1,0){0.5}}\put(8.1,1.9){\scriptsize $\pi(D_k)$}
\put(8,1){\line(-1,0){1.5}}
\put(8,0.98){\line(-1,0){1.5}}\put(8.1,0.9){\scriptsize
$\pi(D'')$} \put(4,1){\line(1,0){2}}
\put(4,0.98){\line(1,0){2}}\put(3.2,0.4){\scriptsize $\pi(D) =
f^n(\pi(D''))$}

\put(12,3){\line(1,0){3}}\put(15.2, 2.9){\scriptsize
$\pi(D_{k-l})$}

\put(1,3){\line(1,0){2}}\put(1.1, 1.6){\scriptsize $D^* \subset \hat \Omega_R$}
\put(1.3, 2.2){\scriptsize $\pi(D^*)$}
\put(4.9,2.97){\line(1,0){0.4}} \put(4.9,2.94){\line(1,0){0.4}}
\put(12.5,3.03){\line(1,0){0.5}} \put(12.5,3.06){\line(1,0){0.5}}
\put(14,3.03){\line(1,0){0.5}} \put(14,3.06){\line(1,0){0.5}}

\thinlines \put(4.5,3){\line(0,1){0.1}} \put(4.4, 3.4){\scriptsize $\zeta_0$}
\put(5.5,3){\line(0,1){0.1}} \put(5.4, 3.4){\scriptsize $\zeta_2$}
\put(4.9,3){\line(0,-1){0.1}}
\put(5.3,3){\line(0,-1){0.1}}
\put(4.8, 2.5){\scriptsize $\c^*_R$}
\put(6.8,3){\line(0,1){0.1}} \put(6.6, 3.4){\scriptsize $\zeta_n$}
\put(7.8,3){\line(0,1){0.1}} \put(7.6, 3.4){\scriptsize $\zeta_{n+2}$}

\put(9,3){\line(0,1){0.1}} \put(8.9, 3.4){\scriptsize $p$}

\put(12.5,3){\line(0,1){0.1}} \put(12.6, 3.4){\scriptsize $\c_l$}
\put(13,3){\line(0,1){0.1}} \put(14.1, 3.4){\scriptsize $\c''_l$}

\put(14,3){\line(0,1){0.1}} \put(14.5,3){\line(0,1){0.1}}

\put(5,3.2){\line(0,1){1}}\put(5,4.2){\line(-1,0){3}}
\put(2,4.2){\vector(0,-1){0.7}} \put(3.1, 4.4){\scriptsize $f^R$}

\put(12.75,2.8){\line(0,-1){0.3}}\put(12.75,2.5){\line(-1,0){5.5}}
\put(7.25,2.5){\vector(0,-1){0.2}}

\put(14.25,2.8){\line(0,-1){2.3}}\put(14.25,0.5){\line(-1,0){7}}
\put(7.25,0.5){\vector(0,1){0.2}}

\put(6.7,1.7){\line(0,-1){0.5}}\put(6.7,1.7){\line(-1,0){1.7}}
\put(5, 1.7){\vector(0,-1){0.4}} \put(5.6, 1.9){\scriptsize $f^n$}

\put(11, 0.7){\scriptsize $f^l$} \put(11, 2){\scriptsize $f^l$}

\end{picture}
\caption{The $\pi$-images of domains $D = D_k$ and $D'$, their
positions with respect to $\zeta_n$ and a sketch how this leads to
a path from $D_{k-l}$ back into $\hat \Omega_R$. } \label{fig:betaD}
\end{minipage}
\vskip-20pt
\end{center}
\end{figure}
Call $\zeta$ a \emph{precritical point of order} $k$ if
$f^k(\zeta) \in \Crit$ and $f^i(\zeta) \notin \Crit$ for $i < k$.
Let $p$ be an orientation reversing fixed point and $\zeta_0$ be a
precritical point \st $(\zeta_0, p)$ contains no precritical
point of lower order. Then there is a point $\zeta_1 \in
f^{-1}(\zeta_0)$ at the other side of $p$ with no precritical
point of lower order in $(p, \zeta_1)$. Continue iterating
backwards to find a sequence $\zeta_0 < \zeta_2 < \zeta_4 < \dots
< p < \dots < \zeta_5 < \zeta_3 < \zeta_1$, \st
$(\zeta_n,p)$ (or $(p, \zeta_{n+1})$) contains no precritical
point of lower order. Let $R$ be \st $(\zeta_0, \zeta_2)$
compactly contains an $R$-cylinder $\c^*_R$. It follows that if
$D$ is a domain \st $\pi(D) \supset (\zeta_0, \zeta_2)$,
then there is an $R$-path from $D$ leading to $D^* \subset \hat \Omega_R$, see
Figure~\ref{fig:betaD}. To continue the argument, we need the
following claim which is proved at the end of this proof.

\begin{claim*} Take $\eps := \min\{ |c-c'| : c \neq c' \in \Crit\}$,
fix $l \ge 0$ and let $J$ be any interval \st  $|f^i(J)| < \eps$
for all $i \le l$.
Then for any pair of $l$-cylinders $\c_l, \c'_l \subset J$, there
is an $l$-cylinder $\c''_l$ in the convex hull of $\c_l$ and
$\c'_l$ \st the images $f^l(\c_l), f^l(\c'_l) \subset
f^l(\c''_l)$.
\end{claim*}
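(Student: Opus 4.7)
I would prove the claim by induction on $l$. In the base case $l = 1$, the hypothesis $|J| < \eps$ means $J$ meets at most two elements of $\P_1$, separated by a single critical point $c$ (if $J$ contains one). If $J$ meets only one element, then $\c_1 = \c'_1$ and the claim is vacuous. Otherwise the two pieces $\c_1, \c'_1 \subset J$ are mapped by $f$ to intervals sharing $f(c)$ as common endpoint and lying on the same side of $f(c)$, by the turning-point assumption on $c$; consequently one image contains the other, and the corresponding piece serves as $\c''_1$.

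For the inductive step, given $\c_l, \c'_l \subset J$, let $\c_{l-1} \supset \c_l$ and $\c'_{l-1} \supset \c'_l$ be the $(l-1)$-cylinders containing them. Since $|f^i(J)| < \eps$ holds in particular for all $i \le l-1$, the inductive hypothesis produces an $(l-1)$-cylinder $\c''_{l-1}$ in the convex hull of $\c_l$ and $\c'_l$ with
\[
f^{l-1}(\c''_{l-1}) \supset f^{l-1}(\c_{l-1}) \cup f^{l-1}(\c'_{l-1}) \supset f^{l-1}(\c_l) \cup f^{l-1}(\c'_l).
\]

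Next, observe that $\c''_{l-1}$ contains at most two $l$-cylinders: the $\P_l$-subdivision of an $(l-1)$-cylinder is governed by critical points of $f$ inside its $f^{l-1}$-image, and $|f^{l-1}(J)| < \eps$ allows at most one such critical point inside $f^{l-1}(\c''_{l-1})$. If $\c''_{l-1}$ is itself an $l$-cylinder, set $\c''_l := \c''_{l-1}$; then $f$ is monotone on $f^{l-1}(\c''_{l-1})$, and applying it to the displayed inclusion finishes the proof. Otherwise $\c''_{l-1}$ splits into two $l$-cylinders $A, B$ at a boundary $y$ with $f^{l-1}(y) = c \in \Crit$. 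The turning-point property at $c$ forces $f^l(A)$ and $f^l(B)$ to share the endpoint $f(c) = f^l(y)$ and lie on the same side of $f(c)$, so one of the two contains the other; let $\c''_l$ be whichever of $A, B$ has the larger $f^l$-image. Since $\c_l$ is an $l$-cylinder, $f^{l-1}(\c_l)$ sits in a single element of $\P_1$ and hence cannot cross the critical point $c$, so it lies entirely in $f^{l-1}(A)$ or in $f^{l-1}(B)$; similarly for $\c'_l$. Applying $f$ and using $f^l(A) \cup f^l(B) \subset f^l(\c''_l)$ yields $f^l(\c_l) \cup f^l(\c'_l) \subset f^l(\c''_l)$.

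The main subtlety is to verify that $\c_l$ (and $\c'_l$) cannot straddle the critical point $c$ inside $f^{l-1}(\c''_{l-1})$; this hinges precisely on $\c_l$ being an $l$-cylinder, which forces $f^{l-1}(\c_l)$ to lie in a single $\P_1$-element. The turning-point hypothesis on all $c \in \Crit$ (assumed at this point in the proof) is crucial for the nesting of $f^l(A)$ and $f^l(B)$; inflection-type critical points would require a separate argument, which is why the paper restricts to the turning-point case in this portion of the appendix.
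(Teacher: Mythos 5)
Your proof follows essentially the same inductive scheme as the paper's: pass to the parent $(l-1)$-cylinders, invoke the inductive hypothesis to obtain $\c''_{l-1}$, and then extract the right $l$-subcylinder of $\c''_{l-1}$ when $f^{l-1}(\c''_{l-1})$ meets $\Crit$. You usefully spell out the step the paper dismisses with ``it is easy to see'' — namely that $f^{l-1}(\c_l)$ lies in a single $\P_1$-piece and therefore cannot straddle the critical point $c$, so it must sit inside $f^{l-1}(A)$ or $f^{l-1}(B)$. Two small discrepancies are worth noting. First, in the base case the paper simply observes that $|J|<\eps$ forces $J$ to contain at most one full $1$-cylinder, so the claim is vacuous for $l=1$; your argument for the two-cylinder subcase is sound but addresses a configuration that in fact cannot occur. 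Second, the inductive hypothesis applied to $\c_{l-1}$ and $\c'_{l-1}$ places $\c''_{l-1}$ in the convex hull $[\c_{l-1},\c'_{l-1}]$, not in $[\c_l,\c'_l]$ as you write; the paper states this correctly. This distinction feeds into the ``at most one critical point inside $f^{l-1}(\c''_{l-1})$'' step, which tacitly assumes $\c''_{l-1}$ stays inside a region whose $f^{l-1}$-image has length $<\eps$. The paper is no more explicit about this than you are, so the two proofs are at the same level of rigor and morally identical.
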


Let $D_k$ be the domain containing $\hat f^k(\hat x)$. Recall that
for every maximal primitive non-closed subgraph $\E$, $D_k\in \E$
for at most finitely many $k$. So let $k_0$ be \st $D_{k_0}$
does not belong to any maximal primitive subgraph that intersects
$\hat \Omega_R$. It follows that for each $k \ge
k_0$, there is no path from $D_k$ leading back into $\hat \Omega_R$.
Furthermore, if $\limsup_k |D_k| \ge \eps$, where $\eps$ is as in
the claim, then for arbitrarily large $k$, there are paths $D_k$
leading back into $\hat \Omega_R$. Therefore we can take $k_0$ so large
that $|D_k| < \eps$ for all $k \ge k_0$.

Assume by contradiction that $p \in \overline{\orb(x)}$. Then
there are arbitrarily large $n$ \st if $k = k(n)$ is the
first integer \st $f^k(x) \in (\zeta_n, \zeta_{n+1})$, then
$k > k_0$. Now if $\pi(D_k) \supset (\zeta_n, \zeta_{n+2})$, then
there is an $n$-path from $D_k \to \dots \to D$ where $\pi(D)
\supset (\zeta_0, \zeta_2)$, and hence an $n+R$-path leading back
into $\hat \Omega_R$ (as in Figure~\ref{fig:betaD}). This contradicts
the definition of $k_0$.

Otherwise, i.e., if $\pi(D_k) \not\supset (\zeta_n, \zeta_{n+2})$,
then the claim implies that there exist $l$ and $l$-cylinders
$\c_l, \c''_l \subset \pi(D_{k-l})$ \st $f^l(\c_l) =
\pi(D_k)$ while $D''$ is \st $\pi(D'') = f^l(\c''_l) \supset
\pi(D_k)$ and $\pi(D'') \supset (\zeta_n, \zeta_{n+2})$, see
Figure~\ref{fig:betaD}. Take $l$ minimal with this property. As
before, this gives an $l+n+R$-path leading from $D_{k-l}$ to
$\hat \Omega_R$. If $k-l > k_0$, then we have a contradiction again
with the choice of $k_0$. However, we can repeat the argument for
infinitely many $n$, and hence infinitely many $k$. If $D_{k-l}$
has been used for one value of $k$, then at least one domain in
$\hat f(D_{k-l})$ is the starting domain of a path leading into
$\hat \Omega_R$. Minimality of $l$ implies that the same $D_{k-l}$ no
longer serves for the next value of $k$. This proves that for $n$
sufficiently large, $k-l > k_0$, and this contradicts the choice
of $k_0$, proving the lemma.

Finally, if there are critical inflection points, then we can
repeat the argument with a branch partition and Hofbauer tower
that disregards the inflection points. Indeed, the above arguments
made use only of the topological structure of $f$, so whether
$f|_{\c_1}$ is diffeomorphic or only homeomorphic on $\c_1 \in
\P_1$ makes no difference.

\begin{proof}[Proof of the Claim.] Let $J$ be an interval \st $|J| < \eps$ . We argue by induction. For $l = 1$, the claim is true, since $J$ can contain at most one $1$-cylinder. Suppose now the
claim holds for all integers $< l$ and $|f^i(J)| < \eps$ for all
$i \le l-1$. Let $\c_l,\c'_l \subset J$ be $l$-cylinders,
contained in $l-1$-cylinders
 $\c_{l-1},\c'_{l-1}$. By induction, we can find an
$l-1$-cylinder $\c''_{l-1}$ in the convex hull $[\c_{l-1},
\c'_{l-1}]$ \st $f^{l-1}(\c_{l-1}), f^{l-1}(\c'_{l-1})
\subset f^{l-1}(\c''_{l-1})$. If $\Crit \cap  f^{l-1}(\c''_{l-1})
= \emptyset$ then $\c''_{l-1}$ is also an $l$-cylinder and
$f^{l}(\c_l), f^l(\c'_l)  \subset f^l(\c''_{l-1})$,
proving the induction hypothesis for $l$. Otherwise, by definition
of $\eps$, $f^{l-1}(\c''_{l-1})$ contains a single critical point,
and the $f^l$-image of one $l$-subcylinder of $\c''_{l-1}$
contains the $f^l$-image of the other. It is easy to see that
this $l$-subcylinder satisfies the claim.
\end{proof}
This completes the proof of the claim and hence of
part (a) of Lemma~\ref{lem:trans_subgraph}.
Part (b) deals with renormalisable maps, so assume
that $J \neq I$ is a $p$-periodic interval
which is minimal in the sense that no proper subinterval of $J$ has period
$p$. We claim that $J$ is associated with an absorbing
subgraph $(\E_{\mbox{\tiny absorb}}, \to)$ of $(\D, \to)$.
Indeed, by minimality of $J$, $f^p:J \to J$ is onto,
and for any $x \in \orb(J)$ and $n \ge 0$, there is $x_n \in \orb(J)$
such that $f^n(x_n) = x$.
Let $\hat J = \cap_k \hat f^k(\pi^{-1}(\orb(J)))$. This set has the
following properties:
\begin{itemize}
\item $\hat J \neq \emptyset$: Since $J$ contains an (interior) $p$-periodic
point, it lifts to a $p$-periodic point in $\hat J$.
\item If $\hat x \in \hat J$ and $D \in \D$ is the domain containing $\hat x$,
then $D \subset \hat J$. This follows from the Markov property.
Let $x = \pi(\hat x)$, take $x_n \in \orb(J)$
as above and
$\hat x_n \in \pi^{-1}(\orb(J))$ such that $\hat f^{n}(\hat x_n) = \hat x$.
For $\hat y \in D$ arbitrary, we can find $\hat y_n \in \hat Z_n[\hat x_n]$
such that $\hat f^n(\hat y_n) = \hat y$. Since this holds for all
$n \in \N$, $\hat y \in \hat J$.
\item $\hat J$ is $\hat f$-invariant. This is immediate
 from  the $f$-invariance of $\orb(J)$ and the definition of $\hat J$.
\end{itemize}
Take $\E_{\mbox{\tiny absorb}} :=
\{ D \in \D : D \cap \hat J \neq \emptyset\}$.
Then the $\hat f$-invariance of $\hat J$ implies that
$(\E_{\mbox{\tiny absorb}}, \to)$ is indeed absorbing.
Now apply part (a) to the subgraph
$(\D \setminus \E_{\mbox{\tiny absorb}}, \to)$ to find the
required (non-closed) primitive subgraph.
\end{proof}

The next proof shows that measures of positive entropy must lift to cover a large portion of the Hofbauer tower.

\begin{proof}[Proof of Lemma~\ref{lem:compat entro}]
Liftability of $\mu$ was shown by Keller \cite{Kellift}, so it
remains to show that $\hat\mu(\hat I_R) > \eta$ uniformly over all
measures with $h_\mu(f) \ge \eps$.

Fix $R \in \N$ and $\delta > 0$ \st $(\delta + \frac2R)
\log(1+\#\Crit) < \eps/2$. Let $\P_n^u$ be the collection of
$n$-cylinders \st $\frac1n \# \{ k < n : \hat f^k \circ i(\c_n)
\subset \hat I_R\} < \delta$, where as before $i^{-1} =
\pi|_{D_0}$, and let $\P_n^l$ be the remaining $n$-cylinders.

If $\hat \mu(\hat I_R)$  is small, then $\mu(\cup_{\c_n \in
\P_n^l} \c_n)$ is small as well. Hence, if the lemma was false,
then for any $\eta > 0$ we could find a measure $\mu$ with $h_\mu(f)
\ge \eps$ and $\mu(\cup_{\c_n \in \P_n^l} \c_n) <
\frac{\eps}{2\log(1+\# {\rm Crit})}$. So assume by contradiction
that there is such a measure $\mu$.

If  $D \in \D$ is any domain outside $\hat I_R$, then only the two
outermost cylinder sets in $\P_R \cap D$ can map under $\hat f^R$
to domains of level $> R$. The $\hat f^R$-images of the other
cylinder sets $J'$ have both endpoints of level $\le R$, so they
have $\lev(\hat f^R(J')) \le R$. Repeating this argument for $\hat
f^R(J')$ of those outermost cylinder sets, we can derive that for
infinitely many $n$:
\[
\lambda_u^n := \# \P_n^u \le (1+\# \Crit)^{\delta n}(1+\#
\Crit)^{(1-\delta) 2n/R} \mbox{ and } \lambda_l^n :=
\# \P_n^l \le (1+\# \Crit)^{n},
\]
so $\log \lambda_u \le (\delta + \frac{2}{R}) \log (1+\# \Crit) <
\eps/2$ and $\log \lambda_l \le \log(1+\# \Crit)$. For any finite
set of nonnegative numbers $a_k$ \st $\sum_k a_k = a \le 1$,
Jensen's inequality gives $-\sum_k a_k \log a_k \le a \log \# \{
a_k \}$. Since the branch partition $\P$ is assumed to generate
the Borel $\sigma$-algebra, the entropy of $\mu$ can be computed
as
\begin{align*}
h_\mu(f) &= \inf_n -\frac{1}n \sum_{\c_n \in \P_n} \mu(\c_n)
\log \mu(\c_n) \\
&= \inf_n -\frac{1}n \left( \sum_{\c_n \in \P_n^l} \mu(\c_n) \log
\mu(\c_n) +
\sum_{\c_n \in \P_n^u} \mu(\c_n) \log \mu(\c_n)  \right) \\
&\le  \inf_n \frac1n \left( \frac{\eps}{2(1+\#\Crit)} \log
\lambda_l^n + \log \lambda_u^n\right) < \eps.
\end{align*}
This contradiction establishes the required $\eta > 0$.

Now to prove the second statement, for each $D \subset \hat I_R$,
we can find $\kappa_D > 0$ \st if $\hat x \in D$ and $d(\hat
x, \partial D) < \kappa_D$, then $\hat f^k(\hat x) \notin \hat
I_R$ for $R < k \le 3R/\eta$. Obviously the set $\hat E := \cup_{D
\subset \hat I_R} \{ \hat x \in D : d(\hat x, D) > \kappa_D\}$ is
compactly contained in $\hat I_R$. If $\hat x$ is a typical point
for $\hat \mu$, then the relative time of $\orb(\hat x)$ spent
outside $\hat I_R$ is at least $\hat \mu(\hat I_R \setminus \hat
E) (\frac{3}{\eta}-1) \le 1$, so  $\hat \mu(\hat I_R \setminus
\hat E) < \eta/2$, whence $\hat\mu(\hat E) > \eta/2$.
\end{proof}

\medskip
\noindent
Department of Mathematics\\
University of Surrey\\
Guildford, Surrey, GU2 7XH\\
UK\\
\texttt{h.bruin@surrey.ac.uk}\\
\texttt{http://www.maths.surrey.ac.uk/}

\medskip
\noindent
Department of Mathematics\\
University of Surrey\\
Guildford, Surrey, GU2 7XH\\
UK\footnote{
{\bf Current address:}\\
Departamento de Matem\'atica Pura\\
Rua do Campo Alegre, 687\\
4169-007 Porto\\
Portugal\\
}\\
\texttt{mtodd@fc.up.pt}\\
\texttt{http://www.fc.up.pt/pessoas/mtodd/}

\end{document}